\documentclass[12pt,reqno]{amsart}
\usepackage{a4wide}
\usepackage{amsfonts,amsmath,amssymb,amscd}
\usepackage{amsthm}
\usepackage{amsmath,todonotes}
\usepackage{amscd}
\usepackage{cancel}
\usepackage{verbatim}
\usepackage{color}
\usepackage[all]{xy}
\usepackage[mathscr]{eucal}
\usepackage{mathrsfs}
\usepackage{fullpage}
\usepackage{enumitem}
\usepackage{hyperref}
\usepackage{bbm}
\usepackage{qtree}
\usepackage{mathrsfs}
\usepackage{bm}
\usepackage{bigints}
\usepackage{url}
\allowdisplaybreaks
\let\dotlessi=\i
\newcommand{\pFq}[5]{\ensuremath{{}_{#1}F_{#2} \left( \genfrac{}{}{0pt}{}{#3}
		{#4} \bigg| {#5} \right)}}

\renewcommand{\l}{\lambda}

\usepackage[headheight=110pt,top=0.6in, bottom=0.7in, left=0.6in, right=0.6in]{geometry}
\numberwithin{equation}{section}

\newtheorem{remark}[subsection]{Remark}

\newtheorem{theorem}{Theorem}

\newtheorem{lemma}[theorem]{Lemma}

\newtheorem{corollary}[theorem]{Corollary}

\theoremstyle{definition}

\numberwithin{theorem}{section} 
\numberwithin{equation}{section}
\numberwithin{table}{section}

\newcommand{\re}{\textnormal{Re}}

\renewcommand{\[}{\left[}

\makeatletter
\def\proof{\@ifnextchar[{\@oproof}{\@nproof}}
\def\@oproof[#1][#2]{\trivlist\item[\hskip\labelsep\textit{#2 Proof of\
		#1.}~]\ignorespaces}
\def\@nproof{\trivlist\item[\hskip\labelsep\textit{Proof.}~]\ignorespaces}
\begin{document}
	\title[]{A divisor function of Wigert and higher degree forms}
	\author{Debika Banerjee, Atul Dixit and Rajat Gupta}
	\thanks{2020 \textit{Mathematics Subject Classification.} Primary 11A25, 11M41. Secondary 33C20, 30B40.\\
		\textit{Keywords and phrases. Restricted divisor function, Dirichlet series, double zeta function, higher degree forms, analytic continuation} }
	%	\author{Atul Dixit}
	%	\author{Shivajee Gupta}
	\address{Department of Mathematics, Indraprastha Institute of Information Technology Delhi, Okhla Phase III, New Delhi 110020, India}
	\email{debika@iiitd.ac.in}
	\address{Department of Mathematics, Indian Institute of Technology Gandhinagar, Palaj, Gandhinagar, Gujarat 382355, India}
	\email{adixit@iitgn.ac.in}
	\address{Department of Mathematics, Indian Institute of Technology Jodhpur, Karwar, Jodhpur,  Rajasthan 342030, India}
	\email{rgupta@iitj.ac.in}
	\begin{abstract}
	Let $k\in\mathbb{N}$. Wigert's divisor function $d^{\left(\frac{1}{k}\right)}(j)$ counts the number of representations of $j$ of the form $m^k+mn$ with $m\geq1	, n\geq0$. Let $\mathcal{F}_k(s)$ denote the Dirichlet series of $d^{\left(\frac{1}{k}\right)}(j)$. While  $\mathcal{F}_2(s)$  is essentially a well-known special case of the Euler-Zagier double zeta function, and hence well-studied, very little is known about  $\mathcal{F}_k(s)$  for $k>2$. We offer three new representations for $\mathcal{F}_k(s)$ for $k\geq2$, one of which is an analogue of the Chowla-Selberg formula as well as of a formula of Atkinson. The meromorphicity of  $\mathcal{F}_k(s)$  is also discussed. The special value  $\mathcal{F}_3\left(\frac{3}{2}\right)$  is expressed in terms of an infinite series of Bessel functions and a generalized divisor function.
	\end{abstract}
	\maketitle
	\tableofcontents
	
	\section{Introduction}\label{intro}
	Let $k\in\mathbb{N}$. Wigert  \cite{wig} introduced the divisor function
	\begin{equation*}
		d^{\left(\frac{1}{k}\right)}(n):=\sum_{d|n\atop{ d\leq n^{1/k}}}1.
	\end{equation*}	
	Clearly, $d^{\left(\frac{1}{1}\right)}(n)=d(n)$, the number of divisors of $n$. For Re$(y)>0$, he showed that 
	\begin{equation}\label{d1k-lambert}
		\sum_{n=1}^{\infty}d^{\left(\frac{1}{k}\right)}(n)e^{-ny}=\sum_{n=1}^{\infty}\frac{e^{-n^{k}y}}{1-e^{-ny}}.
	\end{equation}
	and obtained
	\begin{equation*}
		\sum_{n\leq x}d^{(\frac{1}{k})}(n)=\frac{x}{k}(\log(x)+k\gamma-1)+O\left(x^{1-\frac{1}{k}}\right),
	\end{equation*}
	where $\gamma$ is Euler's constant.
	%Equivalently, letting $q=e^{-z}$ so that for $|q|<1$, we have 
	%\begin{equation}
	%\sum_{n=1}^{\infty}d^{\left(\frac{1}{N}\right)}(n)q^n=\sum_{n=1}^{\infty}\frac{q^{n^N}}{1-q^n}.
	%\end{equation}
	
	After deducing \eqref{d1k-lambert}, Wigert writes, \textit{`Cependent la s\'{e}rie \dots n'est pas d'un facile acc\`{e}s quand on veut
		\'{e}tudier les propri\'{e}t\'{e}s de la fonction qu'elle repr\'{e}sente. Je pr\'{e}f\`{e}re donc de m'occuper d'une autre généralisation, semblable \`{a} la préc\'{e}dente.'}, which, in English, translates to \textit{`However the series \dots is not easy to work with when you want to study the properties of the function it represents. I therefore prefer to deal with another generalization, similar to the previous one.'} 
	
	In the same paper \cite{wig}, Wigert then studies a different arithmetic function, namely,
	\begin{equation*}
		d^{(k)}(n):=\sum_{d^k|n}1,	
	\end{equation*} 
	for a fixed $k\in\mathbb{N}$. 
This function as well as the Lambert series associated to it, and their generalizations, have received a lot of attention in recent years; see \cite{banerjee-maji}, \cite{badigu}, \cite{DMV}, \cite{guptamaji},  \cite{koshwigleningrad}, \cite{robles-roy} and \cite{wigannalen}. For example, a generalization of  $d^{(k)}(n)$, namely,
\begin{equation}\label{sigmakz(n)} \sigma_{z}^{(k)}(n):=\sum_{d^k|n}d^z\hspace{10mm}(k\in\mathbb{N}, z\in\mathbb{C}),
\end{equation}
was recently studied in \cite{DMV} and a  Vorono\"{\dotlessi} summation formula was obtained for it. This function will also turn up in the results of the	 current paper.

However, the literature on the divisor function Wigert initially set out to work in \cite{wig}, that is, on $d^{\left(\frac{1}{k}\right)}(n)$, is quite scarce.  Bordell\`{e}s \cite{bordelles} obtained an asymptotic formula for the average order of $d^{\left(\frac{1}{k}\right)}(n)$ which was improved by Banerjee and Khyati \cite{debikakhyati}. There seem to be no other studies on $d^{\left(\frac{1}{k}\right)}(n)$ from the point of view of analytic number theory besides these two. 

For $a\in\mathbb{C}$, Bordell\`{e}s  \cite{bordelles} also considered the generalized divisor function
\begin{equation*}
	\sigma_{a}^{\left(\frac{1}{k}\right)}(n):=\sum_{d|n\atop d\leq n^{1/k}}d^{a}.
\end{equation*}
It is easy to see that for Re$(y)>0$,
\begin{equation*}
	\mathcal{J}_{a,k}(y):=	\sum_{n=1}^{\infty}\sigma_{a}^{\left(\frac{1}{k}\right)}(n)e^{-ny}=\sum_{n=1}^{\infty}\frac{n^ae^{-n^{k}y}}{1-e^{ny}}.
\end{equation*}
Equivalently, letting $q=e^{-y}$ so that  $|q|<1$, we have 
\begin{equation*}
	\mathscr{J}_{a,k}(q):=\mathcal{J}_{a-1,k}(y):=\sum_{n=1}^{\infty}\frac{n^{a-1}q^{n^k}}{1-q^n}.
\end{equation*}
It is trivial to observe that
\begin{align*}
	\mathscr{J}_{1,k}(n)=\sum_{n=1}^{\infty}d^{\left(\frac{1}{k}\right)}(n)q^n,\hspace{4mm}	\mathscr{J}_{1,1}(n)=\sum_{n=1}^{\infty}d(n)q^n,
\end{align*}
and that for an even integer $j>1$, 	$\mathscr{J}_{j,1}(n)$ is essentially what occurs in the Fourier expansion of the Eisenstein series $E_j$ on $\textup{SL}_{2}\left(\mathbb{Z}\right)$.

For $j\in\mathbb{Z}$, the function $\mathscr{J}_{j,2}(q)$ are generating functions of Joyce invariants which appeared in a work of Mellit and Okada \cite{mellitokada} on the moduli stacks of semistable objects with Donaldson–Thomas type invariants, introduced by Joyce, for stability conditions on $K3$ surfaces. Mellit and Okada showed the invariance of moduli stacks on faithful stability conditions and motivic invariants. 
They also showed \cite[Theorem 1.2]{mellitokada} that the function
\begin{equation*}
	-\frac{1}{24}-\frac{1}{2}\sum_{n=1}^{\infty}nq^{n^2}+\mathscr{J}_{2, 2}(q)=	-\frac{1}{24}+\frac{1}{2}\sum_{n=-\infty\atop n\neq 0}^{\infty}\frac{nq^{n^2}}{1-q^n}
\end{equation*}
is a mock modular form on $\textup{SL}_{2}\left(\mathbb{Z}\right)$. Bringmann \cite[Theorem 1.3]{bringmann2018} worked with $\mathscr{J}_{k,2}(q)$ for even $k\geq 2$, and generalized the result of Mellit and Okada.

As is the case with many arithmetic functions related to the divisors of integers,  there are two main streams of research involving $d^{\left(\frac{1}{k}\right)}(n)$: one originates from analytic number theory and concerns the analytic properties of the Dirichlet series of $d^{\left(\frac{1}{k}\right)}(n)$, its summatory function and the analyses of its error term  \cite{debikakhyati}, \cite{bordelles}, while the other has its genesis in the theory of modular and mock modular forms \cite{bringmann2018},  \cite{mellitokada}. 

In this first among a series of papers, we focus on the first aspect. Our objective is to study the Dirichlet series 	$\mathcal{F}_k(s)$ of $d^{\left(\frac{1}{k}\right)}(n)$, in particular, obtain its meromorphic continuation in the entire complex plane and derive a Chowla-Selberg-type formula for it. See Theorems \ref{ac} and \ref{cs type formula}.

Observe that for Re$(s)>1$ and $k\in\mathbb{N}$,
\begin{align}\label{f2i}
	\mathcal{F}_k(s):=\sum_{j=1}^{\infty} \frac{d^{\left(\frac{1}{k}\right)}(j)}{j^s}
	=\sum_{j=1}^{\infty}\frac{1}{j^s}\sum_{m|j\atop m\leq j^{1/k}}1
	=\sum_{\substack{m=1\\ n=0}}^{\infty}\frac{1}{(m^k+mn)^s},
	%=\mathcal{L}_k(s)+\zeta(ks).
\end{align}
which can be written in the form
\begin{align}\label{f2}
	\mathcal{F}_k(s)=\zeta(ks)+\mathcal{L}_k(s),
\end{align}
where $\zeta(s)$ is the Riemann zeta function defined for Re$(s)>1$ by
\begin{equation*}
	\zeta(s)=\sum_{n=1}^{\infty}\frac{1}{n^{s}},
\end{equation*}
and
\begin{equation}\label{l2i}
	\mathcal{L}_k(s):=\sum_{\substack{m, n=1}}^{\infty}\frac{1}{(m^k+mn)^s}.
\end{equation}
From \eqref{f2i}, one sees that $d^{\left(\frac{1}{k}\right)}(j)$ is nothing but the number of representations of $j$ in forms of degree $k$ of the form $m^k+mn$, all of which are inhomogeneous except when $n=0$ and/or $k=2$.

The special case $k=2$ of \eqref{l2i} has been well-studied, and is a specialization of the Euler-Zagier double zeta function defined by \cite{atkinson}, \cite{kiuchitanigawa}
\begin{equation*}
	\zeta_2(s_1, s_2):=\sum_{1\leq n_1<n_2}\frac{1}{n_1^{s_1}n_2^{s_2}}.
\end{equation*}
Indeed, 
the above series converges absolutely when Re$(s_1)>1$ and Re$(s_1+s_2)>2$. Atkinson \cite{atkinson} obtained the meromorphic continuation of $\zeta_2(s_1, s_2)$  in the whole space $\mathbb{C}^2$ and then used it to derive one of the central formulas in the analytic theory of the Riemann zeta function, now known as \emph{Atkinson's formula}. The literature on the double zeta function is vast and its  functional as well as approximate functional equations are well-known; see, for example, \cite{choiematsumoto}, \cite{kiuchitanigawa}. 

In \cite[Equations (1.3), (2.1)]{atkinson}, we find that for Re$(s)>1$,
\begin{equation}\label{by atkinson}
	\mathcal{L}_2(s)=\frac{1}{2}\left(\zeta^2(s)-\zeta(2s)\right).
\end{equation}
This is straightforward to derive upon noticing that
\begin{equation*}
	d^{\left(\frac{1}{2}\right)}(n)=\begin{cases}
		\frac{1}{2}d(n),\hspace{10mm}\text{if}\hspace{1mm}n\hspace{1mm}\text{is not a perfect square},\\
		\frac{1}{2}(d(n)+1),\text{if}\hspace{1mm}n\hspace{1mm}\text{is a perfect square}, 
	\end{cases}
\end{equation*}
since then
\begin{align*}
	\mathcal{F}_2(s)=\sum_{n=1}^{\infty}\frac{d^{\left(\frac{1}{2}\right)}(n)}{n^s}&=\sum_{n=1\atop n\hspace{0.5mm}\text{not a perfect square}}^{\infty}\frac{d(n)}{2n^s}	+\sum_{n=1\atop n\hspace{0.5mm}\text{ a perfect square}}^{\infty}\frac{(d(n)+1)}{2n^s}	\nonumber\\
	&=\frac{1}{2}\zeta^2(s)+\frac{1}{2}\zeta(2s),
\end{align*}
One can then appeal to the special case $k=2$ of  \eqref{f2}.

Kiuchi, Tanigawa and Zhai derived another representation for $\mathcal{F}_2(s)$  \cite[Theorem 1]{kiuchitanigawa} whose special case $s_1=s_2=s$ yields, for $0<\textup{Re}(s)<1$,
\begin{align}\label{f22spl}
	\mathcal{F}_2(s)=\frac{1}{2}\zeta(2s)+\frac{\zeta(2s-1)}{s-1}+ \frac{s}{\pi }  \sum_{n=1}^{\infty} \frac{\sigma_{1-2s}(n)}{n} \int_{1}^{\infty} \sin (2\pi n x )x^{-s-1	} dx.
\end{align}
Using the general result involving $s_1$ and $s_2$, they derived an approximate functional equation for the double zeta-function $\zeta(s_1,s_2)$.
They also showed \cite[Remark 1]{kiuchitanigawa} that the series occurring in this result is analogous to that occurring in a formula of Atkinson  \cite[Equations (2.3), (2.5)]{kiuchitanigawa}.

For $k>2$, however, the only known information about the Dirichlet series in \eqref{f2i} or the series in \eqref{l2i}, is that it can be meromorphically continued to the whole complex plane. This was done by Mellin \cite{mellin} (see also \cite[pp. 3-4]{komori-matsumoto-tsumura}) who actually worked with a more general Dirichlet series. But Mellin \cite{mellin} did not give the pole structure of his more general Dirichlet series.  Also, no transformations or explicit representations of $\mathcal{L}_k(s)$ or $\mathcal{F}_k(s)$ are known for $k>2$ which is what we accomplish in this paper. 

We give a new proof of the meromorphic continuation of $\mathcal{F}_k(s)$, which, additionally, gives the precise location of its poles. 

\begin{theorem}\label{ac}
	Let $k\in\mathbb{N}, k\geq2$.	The function $\mathcal{F}_k(s)$ can be meromorphically continued to the whole plane $\mathbb{C}$ with  a double pole at $s=1$, a simple pole at $s=\frac1k$,  and simple poles at $s=\frac1k-\frac{(k-1)(2n+1)}{k}$ for $n =0, 1, 2,\cdots$.
\end{theorem}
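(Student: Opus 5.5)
The plan is to start from \eqref{f2}, $\mathcal{F}_k(s)=\zeta(ks)+\mathcal{L}_k(s)$, and to write the inner sum over $n$ in \eqref{l2i} as a Hurwitz zeta function. For $\textup{Re}(s)>1$, putting $a_m:=m^{k-1}$, we have
\begin{equation*}
\mathcal{L}_k(s)=\sum_{m=1}^{\infty}\frac{1}{m^{s}}\sum_{n=1}^{\infty}\frac{1}{(m^{k-1}+n)^{s}}=\sum_{m=1}^{\infty}\frac{1}{m^{s}}\left(\zeta(s,a_m)-a_m^{-s}\right).
\end{equation*}
Since $a_m\to\infty$, I will insert the Euler--Maclaurin (large-$a$) expansion of the Hurwitz zeta function. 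For any fixed $J\in\mathbb{N}$ it reads
\begin{equation*}
\zeta(s,a)=\frac{a^{1-s}}{s-1}+\frac{a^{-s}}{2}+\sum_{j=1}^{J}\frac{B_{2j}}{(2j)!}\,s(s+1)\cdots(s+2j-2)\,a^{-s-2j+1}+R_J(s,a),
\end{equation*}
with $R_J(s,a)=O_s\big(|a^{-s-2J-1}|\big)$ uniformly for $s$ in compact sets. The remainder is given explicitly by an integral against a periodic Bernoulli function, so it is entire in $s$.

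Substituting $a=m^{k-1}$ and summing over $m$ turns each term into a shifted Riemann zeta function.
\begin{itemize}
\item The first term gives $\zeta(ks-k+1)/(s-1)$. Both factors have a simple pole at $s=1$, so this term has a double pole there.
\item The term $\tfrac12 a^{-s}$ together with $-a^{-s}$ gives $-\tfrac12\zeta(ks)$. Adding the $\zeta(ks)$ from \eqref{f2} leaves $\tfrac12\zeta(ks)$, which has a simple pole at $s=1/k$.
\item The $j$-th Bernoulli term gives $\frac{B_{2j}}{(2j)!}\,s(s+1)\cdots(s+2j-2)\,\zeta\big(ks+(k-1)(2j-1)\big)$. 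This has a simple pole at $s=\frac1k-\frac{(k-1)(2j-1)}{k}$, which is exactly the stated list with $n=j-1$.
\end{itemize}
Finally, $\sum_m m^{-s}R_J(s,m^{k-1})$ converges absolutely and uniformly on compacta for $\textup{Re}(ks)+(k-1)(2J+1)>1$, so it is holomorphic there. Letting $J\to\infty$ gives the meromorphic continuation of $\mathcal{F}_k(s)$ to all of $\mathbb{C}$. The only possible poles are the ones listed, because the expansions for different $J$ agree on overlapping half-planes.

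The main obstacle is proving that each listed pole is actually present, i.e.\ that its residue is nonzero. There are two ways a pole could cancel.
\begin{itemize}
\item The Pochhammer factor $s(s+1)\cdots(s+2j-2)$ could vanish at the pole. This happens only when $s_j=\frac{1-(k-1)(2j-1)}{k}$ is a nonpositive integer with $|s_j|\le 2j-2$.
\item Two of the listed points could coincide, so that their residues cancel. This includes $s=1/k$ coinciding with a Bernoulli pole, or either of these coinciding with $s=1$.
\end{itemize}
Both conditions are congruence conditions on $(k-1)(2j-1)\bmod k$, and I would check them explicitly. For $k=2$ the obstruction is genuinely present: $s_j=1-j$ is killed by the factor $s(s+1)\cdots(s+2j-2)$. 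This is consistent with \eqref{by atkinson}, which gives poles only at $1$ and $\tfrac12$. So for $k=2$ the statement must be read as saying the poles lie among these points.

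For $k\ge3$, I would show $s_j\notin\mathbb{Z}$ in general. The condition $k\mid 1-(k-1)(2j-1)$ is equivalent to $k\mid 2j$, so $s_j\notin\mathbb{Z}$ whenever $k\nmid 2j$. For the remaining $j$, I would verify that $s_j<-(2j-2)$ or otherwise directly compute the residue
\begin{equation*}
\frac{B_{2j}}{k\,(2j)!}\,s_j(s_j+1)\cdots(s_j+2j-2),
\end{equation*}
and show it is nonzero, using that $B_{2j}\neq0$. In the same way I would confirm that the leading Laurent coefficient of $\zeta(ks-k+1)/(s-1)$ at $s=1$, namely $1/k$, is nonzero, so the pole at $s=1$ is genuinely double.
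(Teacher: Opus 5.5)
Your continuation argument is sound, and it takes a different and more elementary route than the paper. The paper writes $(1+x)^{-s}$ as a Mellin--Barnes integral, obtains $\mathcal{L}_k(s)=\frac{1}{2\pi i}\int_{(c)}\frac{\Gamma(s+z)\Gamma(-z)}{\Gamma(s)}\zeta(-z)\zeta(ks+(k-1)z)\,dz$, and shifts the contour to the right. It collects residues at $z=-1$, $z=0$ and the odd positive integers; the even ones are cancelled by the trivial zeros of $\zeta(-z)$. Since $\zeta(1-2j)=-B_{2j}/(2j)$ and $\Gamma(s+2j-1)/\Gamma(s)=s(s+1)\cdots(s+2j-2)$, those residues are exactly your Euler--Maclaurin terms. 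So both routes give the same finite expansion plus a holomorphic remainder, and yours avoids Stirling estimates and contour shifting. One small slip: the Euler--Maclaurin remainder is not entire in $s$. It is only holomorphic in a half-plane $\textup{Re}(s)>-2J-1$, which is harmless because your half-plane $k\,\textup{Re}(s)+(k-1)(2J+1)>1$ lies inside it. The paper itself only asserts that the pole structure ``can be easily checked''.

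The genuine gap is the step you flag as the main obstacle: it cannot be carried out, because the residue you want to show is nonzero is actually zero. If $k\mid 2j$, then $s_j=\frac{2j}{k}-(2j-1)$ with $1\le \frac{2j}{k}\le j\le 2j-1$. Hence $s_j$ is an integer in $[-(2j-2),0]$, and your alternative $s_j<-(2j-2)$ never occurs. The factor $s(s+1)\cdots(s+2j-2)$ therefore vanishes at $s_j$, so $\frac{B_{2j}}{k(2j)!}\,s_j(s_j+1)\cdots(s_j+2j-2)=0$. The points $1>\frac1k>s_1>s_2>\cdots$ are pairwise distinct, so your coincidence check is vacuous. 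Every other term of the expansion is holomorphic at $s_j$, so nothing can restore the pole.

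Consequently $\mathcal{F}_k$ is holomorphic at $s_j$ whenever $k\mid 2j$, for every $k\ge 2$, not only $k=2$. For example, $\mathcal{F}_3$ is regular at $s=-3,-7,\dots$ ($j=3,6,\dots$), and $\mathcal{F}_4$ is regular at $s=-2,-5,\dots$ ($j=2,4,\dots$). The statement as given overclaims, and the paper's ``easily checked'' does not address this. What your argument actually proves is:
\begin{itemize}
\item a double pole at $s=1$ with leading coefficient $1/k$;
\item a simple pole at $s=1/k$ with residue $1/(2k)$;
\item simple poles exactly at $\frac1k-\frac{(k-1)(2n+1)}{k}$ for those $n\ge 0$ with $k\nmid 2(n+1)$;
\item holomorphy everywhere else.
\end{itemize}
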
 
Our second theorem gives a new representation for the Dirichlet series of $d^{\left(\frac{1}{k}\right)}(n)$ whose equivalent version (see \eqref{formula2} below) generalizes \eqref{f22spl}.
\begin{theorem}\label{ktz}
	Let $k\geq2$ and $\textup{Re}(s)>0$. Let the function $\sigma_z^{(k)}(n)$ be defined in \eqref{sigmakz(n)}. Then
	\begin{align}\label{formula3}
	\mathcal{F}_k(s) &= \frac{1}{2}\zeta(ks) + \frac{\zeta(ks - (k - 1))}{s - 1} + \frac{s}{12} \zeta(ks + k - 1) \nonumber\\
	&\quad- \frac{s(s + 1)}{2\pi^2} \sum_{n=1}^{\infty} \frac{\sigma_{k-1 - ks}^{(k-1)}(n)}{n^2} \int_1^{\infty} \cos(2\pi n x) x^{-s-2} \, dx.
\end{align}
\end{theorem}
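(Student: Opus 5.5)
Writing $m^k+mn=m(m^{k-1}+n)$, we have for $\textup{Re}(s)>1$
\begin{equation*}
\mathcal{F}_k(s)=\sum_{m=1}^{\infty}\frac{1}{m^{s}}\sum_{n=0}^{\infty}\frac{1}{(m^{k-1}+n)^s}=\sum_{m=1}^{\infty}m^{-s}\zeta\left(s,m^{k-1}\right),
\end{equation*}
where $\zeta(s,a)$ is the Hurwitz zeta function. The plan is to apply the Euler--Maclaurin summation formula to the inner sum, carried to second order in Stieltjes form. With $\psi(x)=x-\lfloor x\rfloor-\frac12$ and $\psi_2(x)=\frac{1}{2}\left(\{x\}^2-\{x\}+\frac16\right)$, for an integer $a\geq1$ we have
\begin{equation*}
\sum_{n\ge a}n^{-s}=\frac{a^{1-s}}{s-1}+\frac{1}{2}a^{-s}+\frac{s}{12}a^{-s-1}-s(s+1)\int_a^\infty \psi_2(x)x^{-s-2}\,dx .
\end{equation*}
This is valid for $\textup{Re}(s)>-1$, since the boundary term at $x=a$ vanishes because $\psi_2(a)=\frac1{12}$ combined correctly. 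We take $a=m^{k-1}$, multiply by $m^{-s}$ and sum over $m$. The first three terms give
\begin{equation*}
\frac{\zeta(ks-(k-1))}{s-1},\qquad \frac12\zeta(ks),\qquad \frac{s}{12}\zeta(ks+k-1),
\end{equation*}
which are exactly the first three terms of \eqref{formula3}. All of this is justified first for $\textup{Re}(s)>1$.

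For the remaining term, substitute $x=m^{k-1}t$ in the integral:
\begin{equation*}
\int_{m^{k-1}}^\infty \psi_2(x)x^{-s-2}\,dx=m^{(k-1)(-s-1)}\int_1^\infty\psi_2\left(m^{k-1}t\right)t^{-s-2}\,dt .
\end{equation*}
Next insert the absolutely and uniformly convergent Fourier series
\begin{equation*}
\psi_2(x)=\frac{1}{2\pi^2}\sum_{r\geq1}\frac{\cos(2\pi r x)}{r^2}.
\end{equation*}
The total contribution is then
\begin{equation*}
-\frac{s(s+1)}{2\pi^2}\sum_{m,r\ge1}\frac{m^{-s-(k-1)(s+1)}}{r^2}\int_1^\infty\cos\left(2\pi r m^{k-1}t\right)t^{-s-2}\,dt .
\end{equation*}
Group the terms according to $n=rm^{k-1}$. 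Then $r^{-2}=n^{-2}m^{2(k-1)}$, and the $m$-exponent becomes
\begin{equation*}
-s-(k-1)(s+1)+2(k-1)=k-1-ks,
\end{equation*}
with $m^{k-1}\mid n$. This produces $\sum_{n}\sigma^{(k-1)}_{k-1-ks}(n)\,n^{-2}\int_1^\infty\cos(2\pi nt)\,t^{-s-2}\,dt$, as claimed. The interchange of summation and rearrangement are legitimate by absolute convergence: the integral is trivially $O\left(1/\textup{Re}(s+1)\right)$, and $\sum_n|\sigma^{(k-1)}_{k-1-ks}(n)|n^{-2}$ converges because $\sum_m m^{k-1-k\sigma}m^{-2(k-1)}<\infty$ for $\sigma>0$.

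Finally, we extend from $\textup{Re}(s)>1$ to $\textup{Re}(s)>0$ by analytic continuation. The left side continues by Theorem \ref{ac}. On the right, the zeta terms are meromorphic, and the series converges absolutely and locally uniformly for $\textup{Re}(s)>0$ by the bound above, so it defines an analytic function there.

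The main point needing care is the bookkeeping in this last rearrangement, i.e.\ checking that the resulting double series converges on all of $\textup{Re}(s)>0$ and not merely for large $\textup{Re}(s)$. The exponent count above shows that it does, with the sum over $m$ behaving like $\zeta(k\sigma+k-1)$.
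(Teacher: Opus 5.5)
Your proof is correct, and it takes a genuinely different route from the paper's.

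\textbf{The paper's route.} It starts from the Mellin--Barnes representation \eqref{eq1} of $\mathcal{L}_k(s)$ and shifts the line of integration past the poles at $z=-1,0$. This produces $\frac{\zeta(ks-(k-1))}{s-1}$ and, after adding $\zeta(ks)$, the term $\frac12\zeta(ks)$. It then applies the functional equation \eqref{zetafe} to $\zeta(-z)$ and expands $\zeta(1+z)\zeta(ks+(k-1)z)$ via \eqref{gen_sigma_z}. The leftover kernel $f_s(2\pi n)$ is identified with $\frac{2}{\pi}\Gamma(s+1)\int_1^\infty\sin(2\pi nx)x^{-s-1}\,dx$ using a lemma imported from Kiuchi--Tanigawa--Zhai. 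That lemma holds only for $0<\textup{Re}(s)<1$, so an extra analytic-continuation step is needed. The result is the sine form \eqref{formula2}. Only the final integration by parts \eqref{f10} converts it into \eqref{formula3}: the boundary term $\frac{1}{2\pi n}$ produces $\frac{s}{12}\zeta(ks+k-1)$ through $\sum_{n}\sigma^{(k-1)}_{k-1-ks}(n)n^{-2}=\zeta(2)\zeta(ks+k-1)$.

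\textbf{Your route.} You stay entirely on the real line. Since $m^{k-1}$ is an integer, the inner sum is a tail of $\zeta(s)$. Second-order Euler--Maclaurin then gives all three explicit terms at once, with the $\frac{s}{12}$ term coming from the $B_2$ boundary contribution. The absolutely convergent Fourier series of $\psi_2$, together with the regrouping $n=rm^{k-1}$, lands directly on the cosine form. Absolute convergence is visible immediately; your bounds in fact give analyticity of the right-hand side in $\textup{Re}(s)>\frac{2}{k}-1$, apart from $s=1,\frac1k$.

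\textbf{What each buys.} Your argument is shorter and self-contained: it needs neither the contour shift, the functional equation, nor the external lemma. The paper's argument has the merit of using the same Mellin--Barnes framework that drives Theorems \ref{ac} and \ref{cs type formula}, and it parallels the double-zeta treatment of Kiuchi--Tanigawa--Zhai. Stopping Euler--Maclaurin at first order would reproduce the paper's intermediate formula \eqref{formula2}, but you would then be working with the only conditionally convergent Fourier series of $\psi$. Going to second order is the right choice.

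\textbf{One expository correction.} The boundary term at $x=a$ in your second integration by parts does not vanish. It equals $-s\cdot\bigl(-\psi_2(a)a^{-s-1}\bigr)=\frac{s}{12}a^{-s-1}$, and it is exactly the source of your third term. Your displayed formula is nevertheless correct.
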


While the series in \eqref{l2i} for $k=2$ has a simple representation in terms of the Riemann zeta function, namely, the one in \eqref{by atkinson}, for $k>2$, it involves the higher degree form $m^k+mn$, and, to the best of our knowledge, no representation for it in terms of well-studied functions is known. We obtain such a result in Theorem \ref{cs type formula}. Before discussing it, however, we recall the famous Chowla-Selberg formula.

	Let $Q(m,n) = am^2 + bmn + cn^2$ denote a positive definite quadratic form with $a,b$ and $c$ real. Then the Epstein zeta-function $Z(s, Q)$,  defined for Re$(s)>1$ by 
	\begin{equation*}
		Z(s, Q):=\sum_{m,n=-\infty\atop (m,n)\neq(0,0)}^{\infty}(Q(m,n))^{-s},
	\end{equation*}	
satisfies the following formula for $s\in\mathbb{C}$, namely,
\begin{align*}
	Z(s, Q)&:=2a^{-s}\zeta(2s)+2a^{-s}k^{1-2s}\pi^{\frac{1}{2}}\frac{\Gamma(s-\frac{1}{2})\zeta(2s-1)}{\Gamma(s)}\nonumber\\
	&\quad+\frac{8a^{-s}k^{\frac{1}{2}-s}\pi^s}{\Gamma(s)}\sum_{n=1}^{\infty}n^{s-\frac{1}{2}}\sigma_{1-2s}(n)K_{s-\frac{1}{2}}	(2\pi kn)\cos\left(\frac{n\pi b}{a}\right),
\end{align*}
where $k^2=|d|/(4a^2)$, $d=b^2-4ac$ and $K_{\nu}(z)$ is the modified Bessel function of the second kind defined in Section \ref{prelim}. It was first announced by Chowla and Selberg in \cite{chowla-selberg}. Our formula in Theorem \ref{cs type formula} may be conceived as a Chowla-Selberg-type formula for $\mathcal{L}_k(s)$, $k>2$. Next, we review a result of Atkinson. In \cite{atkinson}, he showed that if Re$(s_1)<0$, Re$(s_2)>1$ and Re$(s_1+s_2)>0$, then
\begin{align*}
\zeta_2(s_1,s_2)=\frac{\Gamma(s_1+s_2-1)\Gamma(1-s_1)}{\Gamma(s_2)}\zeta(s_1+s_2-1)+2\sum_{n=1}^{\infty}\sigma_{1-s_1-s_2}(n)\int_{0}^{\infty}y^{-s_1}(1+y)^{-s_2}\cos(2\pi ny)dy.
\end{align*}
Our result below is an analogue of the above formula of Atkinson as well.
\begin{theorem}\label{cs type formula}
	Let $k>2$ and $1<\textup{Re}(s)<k-1$. Let $\sigma_{z}^{(k)}(n)$ and  $\mathcal{F}_k(s)$ be defined in \eqref{sigmakz(n)} and \eqref{f2i} respectively. Then
	\begin{align*}
		\mathcal{F}_k(s)&=\zeta(ks)+\zeta^2(s)+\frac{1}{(k-1)\Gamma(s)}\Gamma\left(\frac{1-s}{k-1}\right)\Gamma\left(\frac{ks-1}{k-1}\right)\zeta\left(\frac{ks-1}{k-1}\right)\nonumber\\
		&\quad+\frac{2}{(k-1)\Gamma(s)}\sum_{n=1}^{\infty} \frac{\sigma^{(k-1)}_{ks-1}(n)}{n^{\frac{ks-1}{k-1}}}\int_{0}^{\infty}\left((1+x)^{-s}-1\right)x^{\frac{s+k-2}{1-k}}\cos\left(2\pi(nx)^{\frac{1}{k-1}}\right)\, dx.
	\end{align*}
\end{theorem}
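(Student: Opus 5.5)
The plan is to peel off the diagonal-type pieces directly from \eqref{f2}, and then apply Poisson summation in the variable $m$ (not $n$) to what remains. Starting from $\mathcal{F}_k(s)=\zeta(ks)+\mathcal{L}_k(s)$ and writing $(m^k+mn)^{-s}=m^{-s}(m^{k-1}+n)^{-s}$, I split
\begin{equation*}
\mathcal{L}_k(s)=\sum_{m,n\geq1}\frac{1}{m^sn^s}+\sum_{n=1}^{\infty}n^{-s}\sum_{m=1}^{\infty}g_n(m),\qquad g_n(t):=t^{-s}\left(\left(1+\tfrac{t^{k-1}}{n}\right)^{-s}-1\right).
\end{equation*}
The first sum is $\zeta^2(s)$, valid since $\textup{Re}(s)>1$.

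For fixed $n$, $g_n(t)\sim -s\,t^{k-1-s}/n$ as $t\to0^+$, so $g_n(0)=0$ when $\textup{Re}(s)<k-1$. Also $g_n(t)=O(t^{-\textup{Re}(s)})$ as $t\to\infty$, and $g_n$ is of bounded variation. Hence the Poisson summation formula in the form
\begin{equation*}
\sum_{m\geq1}g_n(m)=\int_0^\infty g_n(t)\,dt+2\sum_{j\geq1}\int_0^\infty g_n(t)\cos(2\pi jt)\,dt
\end{equation*}
applies (I would justify it via Euler--Maclaurin/partial summation, or by a limiting argument with the conditionally convergent cosine series).

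In each integral I substitute $t=(nx)^{1/(k-1)}$. A direct computation gives $n^{-s}g_n(t)\,dt=\frac{1}{k-1}n^{-\frac{ks-1}{k-1}}\left((1+x)^{-s}-1\right)x^{\frac{s+k-2}{1-k}}dx$, and $\cos(2\pi jt)=\cos\left(2\pi(j^{k-1}nx)^{1/(k-1)}\right)$.

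\emph{The $j=0$ term.} It gives $\frac{1}{k-1}\zeta\left(\frac{ks-1}{k-1}\right)\int_0^\infty\left((1+x)^{-s}-1\right)x^{a-1}dx$ with $a=\frac{1-s}{k-1}$; the zeta series converges because $\textup{Re}(s)>1$. For $1<\textup{Re}(s)<k$ we have $-1<\textup{Re}(a)<0$. In that strip the integral is the analytic continuation of the beta integral, namely $\Gamma(a)\Gamma(s-a)/\Gamma(s)$, which one verifies by a single integration by parts reducing it to $\int_0^\infty x^{a}(1+x)^{-s-1}dx$. Since $s-a=\frac{ks-1}{k-1}$, this produces exactly the third term of the theorem.

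\emph{The $j\geq1$ terms.} They give $\frac{2}{k-1}\sum_{j,n}n^{-\frac{ks-1}{k-1}}I(j^{k-1}n)$, where $I(y)$ denotes the integral in the statement with $n$ replaced by $y$. I group the terms by $N=j^{k-1}n$. The coefficient becomes
\begin{equation*}
\sum_{j^{k-1}\mid N}\left(N/j^{k-1}\right)^{-\frac{ks-1}{k-1}}=N^{-\frac{ks-1}{k-1}}\sum_{j^{k-1}\mid N}j^{ks-1}=N^{-\frac{ks-1}{k-1}}\sigma^{(k-1)}_{ks-1}(N),
\end{equation*}
which is the claimed series. Multiplying the $1/(k-1)$ and $1/\Gamma(s)$ bookkeeping through finishes the identity.

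The main obstacle is analytic: justifying Poisson summation together with the interchange of the $n$-sum with the $j$-sum, and the regrouping into a single series over $N$. For this I would bound $I(y)$ by substituting $u=(yx)^{1/(k-1)}$ and integrating by parts once or twice. The boundary terms vanish because $g$ vanishes to order $t^{k-1-s}$ at $0$ and decays at $\infty$, and this should give $I(y)\ll_s y^{-\delta}$ for some $\delta>0$ depending on $\textup{Re}(s)$. That decay yields absolute convergence of the double series in the strip $1<\textup{Re}(s)<k-1$. If the decay is too weak near the edges, I would first prove the identity on a narrower substrip (for instance $\textup{Re}(s)$ close to $2$) and then extend it by analytic continuation, once both sides are shown to be analytic in $s$ on the full strip.
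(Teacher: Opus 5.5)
Your route is genuinely different from the paper's, and the computations in it are correct. The paper never uses Poisson summation. It starts from the Mellin--Barnes integral \eqref{eq1} and picks up $\zeta^2(s)$ and the $\Gamma\Gamma\zeta$ term as residues at $z=-s$ and $z=\frac{1-ks}{k-1}$. It then applies the functional equation to $\zeta(ks+(k-1)z)$ and expands $\zeta(-z)\zeta(1-ks-(k-1)z)$ in $\sigma^{(k-1)}_{ks-1}(n)$. Each $I_{n,k}$ is evaluated by residues as a Meijer $G$-function (Theorem \ref{meijerg specialization}), and only then is the cosine integral recognized through Theorem \ref{integral conjecture}, which needs the extended Parseval formula. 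Your Poisson summation in $m$ produces the three pieces directly: $\zeta^2(s)$ is the subtracted term, the $\Gamma\Gamma\zeta$ term is the zero frequency (your continued beta integral is right), and the series comes from the nonzero frequencies regrouped by $N=j^{k-1}n$. Your route needs no special functions and covers the whole strip in one go; Theorem \ref{integral conjecture} is only proved for $1<\textup{Re}(s)<3$, which falls short of $k-1$ once $k\ge5$. It also makes transparent why the strip is $1<\textup{Re}(s)<k-1$. What the paper's route buys is the Meijer-$G$ closed form of the integral and its asymptotics, which Theorem \ref{analytically extended} requires.

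However, your last step does not deliver the displayed formula. Nothing in your $j\ge1$ computation produces a factor $1/\Gamma(s)$; the only $\Gamma(s)$ in your proof comes from the beta integral at $j=0$. What you have actually proved is the identity with $\frac{2}{k-1}$ in front of the series, not $\frac{2}{(k-1)\Gamma(s)}$, and ``bookkeeping'' cannot bridge a factor of $\Gamma(s)$. You should say this explicitly, because it is the displayed constant that is off. In passing from \eqref{mks after fe} to \eqref{mks after fe1}, the paper pulls $\frac{1}{\Gamma(s)}$ outside the sum although $F(z)$ in \eqref{ink} still contains it, so that factor is counted twice. With that corrected, the paper's own argument gives your $\frac{2}{k-1}$. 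This agrees with the paper's $k=2$ analogue, Theorem \ref{k=2 cs type formula}, whose Bessel series carries a single $\frac{1}{\Gamma(s)}$: its coefficient is $-2^{s-\frac12}\pi^s\Gamma(1-s)$, exactly as in Atkinson's relation quoted in the introduction. Correspondingly, $\Gamma^2(s)$ in \eqref{analytically extended eqn} and \eqref{lks final other} should be $\Gamma(s)$.

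Finally, ``$I(y)\ll y^{-\delta}$ for some $\delta>0$'' is not enough to justify the interchanges. Write $\sigma=\textup{Re}(s)$. Interchanging the $n$- and $j$-sums and regrouping by $N$ requires absolute convergence of $\sum_{n,j}n^{-\frac{k\sigma-1}{k-1}}|I(j^{k-1}n)|$, and that needs $\delta>\frac{1}{k-1}$. A single integration by parts gives exactly $\delta=\frac{1}{k-1}$, which is the borderline case $\sum_j j^{-1}$. Work in $t=x^{1/(k-1)}$, split at $t\asymp y^{-1/(k-1)}$, and integrate by parts a second time on the outer range. This yields $\delta=\frac{\min(k-\sigma,2)}{k-1}$; the true rate is $\frac{k-\sigma}{k-1}$, as in \eqref{meijer G asymptotic}. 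Both exceed $\frac{1}{k-1}$ precisely when $\sigma<k-1$. With this bound everything converges absolutely on the full strip, and no analytic continuation is needed.
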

The integral in the above theorem is quite tricky to evaluate. It is important to note that the standard version of Parseval's formula for Mellin transforms is inapplicable here. One has to resort to an extended form of Parseval's formula given in Theorem \ref{vu kim tuan}. Before we state  this integral evaluation, we introduce a compact notation for a Meijer $G$-function frequently arising in our results, that is, we define
\begin{align}\label{short notation} \mathcal{G}_k(s,y):=G_{2, \, \,\,\,\,\,2k}^{k+1, \, \, 2}\left( \begin{matrix}
		\left\{\frac{2k-3+s}{2k-2}, \frac{k-2+s}{2k-2}\right\},\left\{\right\}\\
		\left\{0,\frac{1}{k-1},\cdots,\frac{k-2}{k-1}, \frac{sk-1}{2k-2}, \frac{sk-1}{2k-2}+\frac{1}{2}\right\}, \left\{\frac{1}{2k-2},\frac{3}{2k-2},\cdots,\frac{2k-3}{2k-2}\right\}
	\end{matrix} \bigg | \left(\frac{y}{2k-2}\right)^{2k-2} \right).
\end{align}
\begin{theorem}\label{integral conjecture}
Let $k\in\mathbb{N}, k\geq2$, and $y>0$. Let $\mathcal{G}_k(s,y)$ be defined in \eqref{short notation}. If $k=2$ and  $1<\textup{Re}(s)<2$, or if $k>2$ and  $1<\textup{Re}(s)<3$,  then
\begin{small}\begin{align}\label{integral conjecture eqn}
\int_{0}^{\infty}\left((1+x)^{-s}-1\right)x^{\frac{s+k-2}{1-k}}\cos\left(yx^{\frac{1}{k-1}}\right)\, dx=(k-1)\bigg\{\tfrac{2^{s-2}}{\sqrt{\pi(k-1)}\Gamma(s)}\mathcal{G}_k(y,s)-y^{s-1}\Gamma(1-s)\sin\left(\frac{\pi s}{2}\right)\bigg\}.
\end{align}
\end{small}
\normalsize
\end{theorem}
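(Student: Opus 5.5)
The plan is to evaluate the integral through Mellin transforms. First substitute $x=t^{k-1}$, so that $\cos(yx^{1/(k-1)})=\cos(yt)$. The left-hand side of \eqref{integral conjecture eqn} then becomes
\begin{equation*}
(k-1)\int_0^\infty\left((1+t^{k-1})^{-s}-1\right)t^{-s}\cos(yt)\,dt.
\end{equation*}
Write $f(t)=(1+t^{k-1})^{-s}-1$ and $g(t)=\cos(t)$.

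The Mellin transform of $f$ comes from the beta integral:
\begin{equation*}
\int_0^\infty t^{w-1}\left((1+t^{k-1})^{-s}-1\right)dt=\frac{1}{k-1}\frac{\Gamma\left(\frac{w}{k-1}\right)\Gamma\left(s-\frac{w}{k-1}\right)}{\Gamma(s)}.
\end{equation*}
This is valid in the shifted strip $-(k-1)<\textup{Re}(w)<0$. Subtracting $1$ is exactly what moves the strip past the pole at $w=0$. The Mellin transform of $\cos t$ is $\Gamma(w)\cos(\pi w/2)$ for $0<\textup{Re}(w)<1$.

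The strips of $f\cdot t^{-s}$ and $\cos$ do not overlap in the way the ordinary Parseval formula requires. Moreover, $\cos$ has no absolutely convergent Mellin transform. So I would invoke the extended Parseval formula, Theorem~\ref{vu kim tuan}, to write the integral as a line integral
\begin{equation*}
\frac{1}{2\pi i}\int_{(c)}\frac{\Gamma\left(\frac{1-s-z}{k-1}\right)\Gamma\left(s-\frac{1-s-z}{k-1}\right)}{(k-1)\Gamma(s)}\,\Gamma(z)\cos\left(\frac{\pi z}{2}\right)y^{-z}\,dz,
\end{equation*}
with a suitable $c$.

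The extended formula is expected to produce a correction term: the line must be moved across the pole of $\Gamma\left(\frac{1-s-z}{k-1}\right)$ at $z=1-s$. Equivalently, the subtracted constant $-1$ contributes $-\int_0^\infty t^{-s}\cos(yt)\,dt$ in a regularized sense. That integral equals
\begin{equation*}
-y^{s-1}\Gamma(1-s)\cos\left(\frac{\pi(1-s)}{2}\right)=-y^{s-1}\Gamma(1-s)\sin\left(\frac{\pi s}{2}\right),
\end{equation*}
which is precisely the second term in \eqref{integral conjecture eqn}. The conditions $1<\textup{Re}(s)<3$ (and $<2$ for $k=2$) should be exactly what makes the chosen contour admissible and the integrals converge at $0$ and $\infty$. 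I would check this carefully against the hypotheses of Theorem~\ref{vu kim tuan}.

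For the remaining line integral, I would convert it to a Meijer $G$-function in two steps:
\begin{itemize}
\item Write $\Gamma(z)\cos(\pi z/2)=\frac{\sqrt{\pi}\,2^{z-1}\Gamma(z/2)}{\Gamma\left(\frac{1-z}{2}\right)}$ via the duplication and reflection formulas.
\item Substitute $z\mapsto(2k-2)u$ plus a shift, to align all the gamma arguments. Then apply Gauss's multiplication formula to $\Gamma\left(\frac{\cdot}{k-1}\right)$-type and $\Gamma(\cdot/2)$ factors, splitting them into $k-1$, respectively $2k-2$, gamma factors in $u$.
\end{itemize}
This should yield $k+1$ numerator gammas of the form $\Gamma(b_j-u)$, two of the form $\Gamma(1-a_j+u)$, and $k-1$ denominator gammas. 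That matches the parameter set in \eqref{short notation}, with argument $(y/(2k-2))^{2k-2}$ and prefactor $\frac{2^{s-2}}{\sqrt{\pi(k-1)}\Gamma(s)}$ coming from the multiplication formula constants.

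The main obstacle I expect is the justification via Theorem~\ref{vu kim tuan}: verifying its hypotheses and correctly identifying the residue term, together with the convergence of the Meijer contour. The parameter bookkeeping to match \eqref{short notation} exactly is tedious but routine.
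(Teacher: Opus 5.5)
Your argument is correct, but it is organized differently from the paper's. You take the Mellin transform of the already-subtracted function $t^{-s}\bigl((1+t^{k-1})^{-s}-1\bigr)$, apply Theorem~\ref{vu kim tuan} once, and get the elementary term as a residue. Write $I:=\int_0^\infty\bigl((1+t^{k-1})^{-s}-1\bigr)t^{-s}\cos(yt)\,dt$. Since $-2<\textup{Re}(1-s)<0$, the Parseval line $\textup{Re}(z)=c\in(0,1)$ has the pole $z=1-s$ of $\Gamma\bigl(\frac{1-s-z}{k-1}\bigr)$ on its left. The contour defining $\mathcal{G}_k$, however, must keep that pole on its right while keeping the pole $z=0$ of $\Gamma(z)$ on its left. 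Deforming around $z=1-s$ alone gives
\begin{equation*}
I=\frac{2^{s-2}}{\sqrt{\pi(k-1)}\,\Gamma(s)}\mathcal{G}_k(s,y)+\mathrm{Res}_{z=1-s},\qquad \mathrm{Res}_{z=1-s}=-\Gamma(1-s)\cos\Bigl(\frac{\pi(1-s)}{2}\Bigr)y^{s-1}=-y^{s-1}\Gamma(1-s)\sin\Bigl(\frac{\pi s}{2}\Bigr).
\end{equation*}
This is exactly the second term you predicted.

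The paper never transforms the subtracted function. It first proves the unsubtracted evaluation, Lemma~\ref{lemma:7.1}, for general $u,\nu$. It then obtains the version with $\cos(yt)(u^{k-1}+t^{k-1})^{-\nu}-u^{-(k-1)\nu}$ in Lemma~\ref{MImp thm}, by Mellin inversion and shifting the line past $s=0$. Finally it subtracts the elementary integral $\int_0^\infty t^{-s}(\cos(yt)-1)\,dt$. In other words, it uses the splitting $\bigl((1+t^{k-1})^{-s}-1\bigr)\cos(yt)=\bigl[\cos(yt)(1+t^{k-1})^{-s}-1\bigr]-\bigl[\cos(yt)-1\bigr]$. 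Your route is shorter and shows where the term $-y^{s-1}\Gamma(1-s)\sin(\frac{\pi s}{2})$ comes from. The paper's route yields the two-parameter integral of Lemma~\ref{lemma:7.1} as a by-product and never has to handle a non-separating contour. The bound $\textup{Re}(s)<3$ is an artifact of the paper's splitting, since both pieces are $O(t^{2-\textup{Re}(s)})$ at $0$. It is not what makes your contour admissible: your integrand is $O(t^{k-1-\textup{Re}(s)})$ at $0$.

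A few details to fix when writing it up:
\begin{itemize}
\item The strips do overlap, namely in $0<\textup{Re}(z)<\min\{1,k-\textup{Re}(s)\}$. The only obstruction to \eqref{Parseval1} is that $t^{c-1}\cos t\notin L(0,\infty)$.
\item Theorem~\ref{vu kim tuan} needs the line $\textup{Re}=\frac12$ inside the Mellin strip of $t^{-s}f(t)$, which means $\textup{Re}(s)<k-\frac12$. So for $k=2$ and $k=3$, the top of the stated range must be reached by analytic continuation in $s$; both sides are analytic there. The paper uses the same device in Lemma~\ref{lemma:7.1}.
\item Do not literally shift the straight line past $1-s$, because that also crosses $z=0$. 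The deformation must encircle $z=1-s$ only.
\item After $z=(2k-2)u$, the two $\Gamma\bigl(\frac{\cdot}{k-1}\bigr)$ factors split by duplication into two factors each. It is $\Gamma(\frac{z}{2})$ and $\Gamma(\frac{1-z}{2})$ that split into $k-1$ factors each. Your final count of $k+1$, $2$ and $k-1$ gammas is nevertheless right.
\end{itemize}
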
 
Once the evaluation of the integral in terms of the Meijer $G$-function is obtained, we can employ the known asymptotics of the Meijer $G$-function to analytically continue Theorem \ref{cs type formula} in a wider region which includes the strip $1<\textup{Re}(s)<k-1$.  This extension is stated below.
\begin{theorem}\label{analytically extended}
%With the same hypotheses as in Theorem \ref{cs type formula}, we have
Let $k\in\mathbb{N}, k>2$ and let $M$ be a positive integer. For any $s$ in the strip $-2M-1<\textup{Re}(s)<(2M+2)(k-1)$,
%with the exception of $s=1, 1/k,$ and $\frac1k-\frac{(k-1)(2n+1)}{k}$ for $0\leq n <\frac{Mk+1}{k-1}$, we have
\small\begin{align}\label{analytically extended eqn}
	\mathcal{F}_k(s)&=\zeta(ks)+\zeta^2(s)+\frac{1}{(k-1)\Gamma(s)}\Gamma\left(\frac{1-s}{k-1}\right)\Gamma\left(\frac{ks-1}{k-1}\right)\zeta\left(\frac{ks-1}{k-1}\right)+\frac{2^{s-1}}{\sqrt{\pi(k-1)}\Gamma^2(s)}\nonumber\\
	&\times\bigg[\sum_{n=1}^{\infty} \frac{\sigma^{(k-1)}_{ks-1}(n)}{n^{\frac{ks-1}{k-1}}}\bigg\{ \mathcal{G}_k\left(s,2\pi n^{\frac{1}{k-1}}\right)-\frac{\pi^{s+\frac{1}{2}}\sqrt{k-1}}{\cos\left(\frac{\pi s}{2}\right)}n^{\frac{s-1}{k-1}}-\sum_{\mu=1}^{M}A_{s,k}(\mu)n^{\frac{s-1}{k-1}-2\mu}-\sum_{\mu=0}^{M}B_{s,k}(\mu)n^{\frac{s-k}{k-1}-2\mu}\bigg\}\nonumber\\
	%&\quad\nonumber\\
	&+\sum_{\mu=1}^{M}A_{s,k}(\mu)\zeta(s+2\mu)\zeta(1-s+2\mu(k-1))+\sum_{\mu=0}^{M}B_{s,k}(\mu)\zeta(s+1+2\mu)\zeta(k-s+2\mu(k-1))\bigg].
\end{align}
\end{theorem}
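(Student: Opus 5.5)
The plan is to start from the representation of Theorem \ref{cs type formula}, valid for $1<\textup{Re}(s)<k-1$. I would substitute the evaluation of Theorem \ref{integral conjecture} with $y=2\pi n^{\frac{1}{k-1}}$, on the common strip $1<\textup{Re}(s)<\min(3,k-1)$. The integral then becomes
\begin{equation*}
(k-1)\Big\{\tfrac{2^{s-2}}{\sqrt{\pi(k-1)}\Gamma(s)}\mathcal{G}_k(s,2\pi n^{\frac{1}{k-1}})-(2\pi)^{s-1}n^{\frac{s-1}{k-1}}\Gamma(1-s)\sin\left(\tfrac{\pi s}{2}\right)\Big\}.
\end{equation*}
Using $\Gamma(1-s)\sin(\pi s/2)=\pi/(2\Gamma(s)\cos(\pi s/2))$, the second term becomes $\frac{2^{s-2}}{\sqrt{\pi(k-1)}\Gamma(s)}\cdot\frac{\pi^{s+\frac12}\sqrt{k-1}}{\cos(\pi s/2)}n^{\frac{s-1}{k-1}}$. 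This is exactly the first subtracted term inside the braces of \eqref{analytically extended eqn}, and the prefactor $\frac{2}{(k-1)\Gamma(s)}$ produces the overall constant $\frac{2^{s-1}}{\sqrt{\pi(k-1)}\Gamma^2(s)}$.

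The main step is the asymptotic expansion of $\mathcal{G}_k(s,y)$ as $y\to\infty$. By Slater's theorem, or the standard asymptotic theory for $G^{m,n}_{p,q}$ with $n=2$, the algebraic part of the large-argument expansion comes from the residues at the poles $b=a_j-1-\ell$ coming from the two $\Gamma(1-a_j+t)$ factors, where $a_1=\frac{2k-3+s}{2k-2}$ and $a_2=\frac{k-2+s}{2k-2}$. The exponential part decays, because here $m+n-\frac{p+q}{2}=k+3-(k+1)=2>0$ is not large enough to give growth; I would check this carefully from the Meijer–Braaksma asymptotics. The resulting powers of $y^{2k-2}$, namely $(y^{2k-2})^{a_j-1-\ell}$, give $n$-powers $n^{2(a_j-1-\ell)}$. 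With $a_1$ these are $n^{\frac{s-1}{k-1}-2\ell}$, and with $a_2$ they are $n^{\frac{s-k}{k-1}-2\ell}$. These match the $A$- and $B$-terms, with $\ell=0$ for $a_1$ giving exactly the cosine term already subtracted. Some pairs of powers differ by integers, so I would check for possible collisions and logarithmic terms.

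From this I define $A_{s,k}(\mu)$ and $B_{s,k}(\mu)$ as these residue coefficients times the constant. The bracket then satisfies, for fixed $s$, a bound $O(n^{\frac{s-1}{k-1}-2M-2})$ plus the $B_{M+1}$ order, uniformly on compacta. Multiplying by $\sigma^{(k-1)}_{ks-1}(n)n^{-\frac{ks-1}{k-1}}\ll n^{\max(0,\textup{Re}(\frac{ks-1}{k-1}))-\textup{Re}\frac{ks-1}{k-1}+\epsilon}$, the sum converges absolutely and locally uniformly when $-2M-1<\textup{Re}(s)<(2M+2)(k-1)$. This is the stated strip, and I expect to confirm the endpoints from the exponent bookkeeping.

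Next I add back the subtracted terms in the original strip. This uses the Dirichlet series identity
\begin{equation*}
\sum_n \sigma^{(k-1)}_{z}(n)n^{-w}=\zeta(w)\zeta((k-1)w-z),
\end{equation*}
valid where both converge. With $w=\frac{ks-1}{k-1}-\frac{s-1}{k-1}+2\mu=s+2\mu$, $z=ks-1$, this gives $\zeta(s+2\mu)\zeta(1-s+2\mu(k-1))$. Similarly, the $B$-terms give $\zeta(s+1+2\mu)\zeta(k-s+2\mu(k-1))$. The $\mu=0$ $A$-term sum would be $\zeta(s)\zeta(1-s)$, and I expect it to combine with the earlier $\zeta^2(s)$ and gamma terms, or to converge only in a region that needs separate justification.

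The subtlety is that these Dirichlet series converge in different half-planes, so I would do the rearrangement in a sub-strip where every added-back series converges. If no such common strip exists, I would add them back one at a time while moving the strip, using the identity theorem at each stage. Both sides are then meromorphic in the large strip and agree on an open set, so analytic continuation yields \eqref{analytically extended eqn}. The main obstacle is the precise Meijer $G$ asymptotic expansion: identifying the coefficients, ruling out exponentially growing parts, and handling coinciding exponents.
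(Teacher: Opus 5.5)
Your plan follows the paper's proof. You combine Theorems \ref{cs type formula} and \ref{integral conjecture}, then subtract the leading terms of the large-argument expansion of $\mathcal{G}_k$. The paper reads these off from Luke's theorem; you get them from the residues at $w=a_j-1-\mu$, which is the same computation. You then add them back through \eqref{gen_sigma_z} and conclude with Theorem \ref{ac} and analytic continuation. Your starting strip $1<\textup{Re}(s)<\min(3,k-1)$ is in fact the correct one. One step, however, does not give what you claim.

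\textbf{The convergence strip.} Write $\sigma=\textup{Re}(s)$. Your pointwise estimate $\sigma^{(k-1)}_{ks-1}(n)n^{-\frac{ks-1}{k-1}}\ll n^{\epsilon}$ (for $k\sigma>1$), combined with the remainder $O(n^{\frac{\sigma-1}{k-1}-2M-2})$, gives convergence only for $\sigma<1+(2M+1)(k-1)$. This misses the stated upper endpoint $(2M+2)(k-1)$ by $k-2$. To reach it you must use the divisor function on average, as the paper does via \eqref{gen_sigma_z}:
$\sum_{n}|\sigma^{(k-1)}_{ks-1}(n)|\,n^{-(\sigma+2M+2)}\le\zeta(\sigma+2M+2)\,\zeta((2M+2)(k-1)-\sigma+1)$.
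The right-hand side is finite, locally uniformly, exactly for $-2M-1<\sigma<(2M+2)(k-1)$. The $B_{M+1}$-order remainder converges in a larger range.

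\textbf{The $\mu=0$ term of the $a_1$-family.} Drop the worry about an $A$-sum $\zeta(s)\zeta(1-s)$. That term is the cosine term coming from Theorem \ref{integral conjecture}, as you noted yourself, and it is never added back. It must stay inside the braces, because absolute convergence of $\sum_n\sigma^{(k-1)}_{ks-1}(n)n^{-s}$ would need both $\sigma>1$ and $\sigma<0$.

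\textbf{Unnecessary hedges.} The series that are added back converge absolutely for $1-2\mu<\sigma<2\mu(k-1)$ ($A$-terms, $\mu\ge1$) and for $-2\mu<\sigma<(2\mu+1)(k-1)$ ($B$-terms, $\mu\ge0$). All of these contain your starting strip, so no stepwise continuation is needed. Since $a_1-a_2=\tfrac12$, the two families of exponents never collide and no logarithms occur. Finally, the exponential part of the Meijer expansion is recessive because $\arg z=0<\sigma^{*}\pi$ with $\sigma^{*}=m+n-\frac{p+q}{2}=2$.
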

\normalsize
%$\frac{1}{2\sqrt{\pi}} G_{2,6}^{4,2} \left( \pi^2 n \middle| \begin{matrix} 1/2, 1/2 \\ 1/4, 0, -3/4, 0, -1/4, 3/4 \end{matrix} \right)$
Note that one cannot let $M=0$ in the above theorem since it does not give the correct strip in the $s$-complex plane where the result holds. However, this can be easily rectified by combining Theorems \ref{cs type formula} and \ref{integral conjecture}. The following identity we get after doing this is actually valid for\footnote{This is easily seen by putting $M=0$ in \eqref{meijer G asymptotic}.} $0<\textup{Re}(s)<k-1$:
	\begin{align}\label{lks final other}
	\mathcal{F}_k(s)&=\zeta(ks)+\zeta^2(s)+\frac{1}{(k-1)\Gamma(s)}\Gamma\left(\frac{1-s}{k-1}\right)\Gamma\left(\frac{ks-1}{k-1}\right)\zeta\left(\frac{ks-1}{k-1}\right)\nonumber\\
	&\quad+\frac{2^{s-1}}{\sqrt{\pi(k-1)}\Gamma^2(s)}\sum_{n=1}^{\infty} \frac{\sigma^{(k-1)}_{ks-1}(n)}{n^{\frac{ks-1}{k-1}}}\bigg\{ 
	\mathcal{G}_k\left(s,2\pi n^{\frac{1}{k-1}}\right)-\frac{\pi^{s+\frac{1}{2}}\sqrt{k-1}}{\cos\left(\frac{\pi s}{2}\right)}n^{\frac{s-1}{k-1}}\bigg\}.
\end{align}
As an application of above result, a representation for $\mathcal{L}_3(3/2)$ in terms of an infinite series of modified Bessel functions is given below.
\begin{corollary}\label{k=3 s=3/2}
Let $I_\nu(z)$ and $K_\nu(z)$ be defined in \eqref{besseli} and \eqref{kbesse} respectively. Then
	\begin{align}\label{k=3 s=3/2 eqn}
%\sum_{\substack{m, n=1}}^{\infty}\frac{1}{(m^3+mn)^\frac{3}{2}}
\sum_{n=1}^{\infty}\frac{d^{\left(\frac{1}{3}\right)}(n)}{n^{\frac{3}{2}}}&=\zeta\left(\frac{9}{2}\right)+\zeta^2\left(\frac{3}{2}\right)+\frac{1}{\sqrt{\pi}}\Gamma\left(\frac{-1}{4}\right)\Gamma\left(\frac{7}{4}\right)\zeta\left(\frac{7}{4}\right)\nonumber\\
&\quad-4\sqrt{\pi}\sum_{n=1}^{\infty}\frac{\sigma_{7/2}^{(2)}(n)}{n^{\frac{3}{2}}}\left(\pi\sqrt{n}\left(I_{\frac{1}{4}}(\pi\sqrt{n})K_{\frac{1}{4}}(\pi\sqrt{n})+3I_{-\frac{3}{4}}(\pi\sqrt{n})K_{\frac{3}{4}}(\pi\sqrt{n})\right)-2\right).
\end{align}
\end{corollary}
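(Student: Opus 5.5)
The plan is to specialize \eqref{lks final other} to $k=3$, $s=3/2$. This point lies in the strip $0<\textup{Re}(s)<k-1=2$, so the identity applies. The non-series terms are immediate. We have $ks=9/2$, $(ks-1)/(k-1)=7/4$, $(1-s)/(k-1)=-1/4$ and $\Gamma(3/2)=\sqrt{\pi}/2$. Hence
\[
\frac{1}{(k-1)\Gamma(s)}=\frac{1}{\sqrt{\pi}},
\]
which gives the first line of \eqref{k=3 s=3/2 eqn}. In the series, $\sigma^{(k-1)}_{ks-1}(n)=\sigma^{(2)}_{7/2}(n)$ and $n^{-(ks-1)/(k-1)}=n^{-7/4}$. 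The prefactor is
\[
\frac{2^{s-1}}{\sqrt{\pi(k-1)}\,\Gamma^2(s)}=\frac{\sqrt2}{\sqrt{2\pi}}\cdot\frac{4}{\pi}=\frac{4}{\pi^{3/2}}.
\]
The subtracted term is $\pi^{2}\sqrt2\,n^{1/4}/\cos(3\pi/4)=-2\pi^2 n^{1/4}$. So everything reduces to evaluating $\mathcal{G}_3(3/2,2\pi\sqrt n)$ in closed form.

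\textbf{Reduction of the Meijer $G$-function.} For $k=3$, $s=3/2$, the definition \eqref{short notation} gives $G^{4,2}_{2,6}$ with argument $(y/4)^4=(\pi\sqrt n/2)^4$. The upper parameters are $\{7/8,5/8\}$. The lower parameters are $\{0,1/2,5/8,9/8\}$ and $\{1/4,3/4\}$. A lower parameter cancels against an upper one only when the upper one appears among the last lower entries (or vice versa), which is not the case here. So I would not look for a cancellation, but use the Mellin–Barnes integral directly. Writing $z=(\pi\sqrt n/2)^4$, the integrand is
\[
\frac{\Gamma(t)\Gamma(\tfrac12+t)\Gamma(\tfrac58+t)\Gamma(\tfrac98+t)\,\Gamma(\tfrac18-t)\Gamma(\tfrac38-t)}{\Gamma(\tfrac34-t)\Gamma(\tfrac14-t)}\,z^{-t}.
\]
Next apply the duplication formula to the pairs $\Gamma(t)\Gamma(\tfrac12+t)$, $\Gamma(\tfrac58+t)\Gamma(\tfrac98+t)$ and $\Gamma(\tfrac14-t)\Gamma(\tfrac34-t)$. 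These collapse to $\Gamma(2t)$, $\Gamma(\tfrac54+2t)$ and $\Gamma(\tfrac12-2t)$. After the substitution $u=2t$, the remaining factors $\Gamma(\tfrac18-t)\Gamma(\tfrac38-t)$ become $\Gamma(\tfrac18-\tfrac u2)\Gamma(\tfrac38-\tfrac u2)$. The result is a $G^{2,2}_{2,3}$-type function (after possibly one more duplication) in the variable $\sqrt z\propto \pi^2 n$.

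\textbf{Identification with Bessel products.} The target expression involves $I_\nu(x)K_\nu(x)$ with $x=\pi\sqrt n$. The known Mellin–Barnes formula expresses $I_\nu(x)K_\nu(x)$ as $\frac{1}{2\sqrt\pi}G^{1,2}_{2,3}$-type functions (in $x^2$), e.g.
\[
I_\nu(x)K_\nu(x)=\frac{1}{2\sqrt\pi}\,G^{1,2}_{2,3}\!\left(x^2\,\middle|\,\begin{matrix}\tfrac12,0\\ 0,\nu,-\nu\end{matrix}\right)\text{-type},
\]
and $I_{-\nu}K_\nu$ has a similar representation. My plan is to split the reduced $G$-function into two pieces, for instance by writing a ratio of gamma factors through a reflection identity $\sin$ sum, giving two terms. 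I would then match these pieces against $xI_{1/4}K_{1/4}$ and $xI_{-3/4}K_{3/4}$. Alternatively, the more verifiable route is to compare residue series: expand both sides as power series in $x$ using the residues at the poles of $\Gamma(2t)$, $\Gamma(\tfrac12+t)$, etc., and the known series of $I_\nu K_\nu$, then confirm agreement of coefficients including the constants $1$ and $3$. The constant $-2$ inside the bracket should come from the $-2\pi^2n^{1/4}$ term once the prefactor $4/\pi^{3/2}$ and $n^{-7/4}$ are redistributed, giving the overall $-4\sqrt\pi\,n^{-3/2}$.

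The main obstacle is this last identification of the $G^{4,2}_{2,6}$ function with the specific combination
\[
\pi\sqrt n\bigl(I_{1/4}K_{1/4}+3I_{-3/4}K_{3/4}\bigr),
\]
including the correct normalization. I expect to settle it by the residue/power-series comparison, aided by the standard large-$x$ asymptotic $xI_\nu K_\nu\to1/2$ as a consistency check on the constant.
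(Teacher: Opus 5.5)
\textbf{There is a genuine gap: the one nontrivial step is not carried out, and the Meijer $G$ parameters feeding it are wrong.}

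What you get right: the elementary terms (the first line, the prefactor $4/\pi^{3/2}$, and the subtracted term $-2\pi^2 n^{1/4}$ that produces the $-2$). Your route is also the paper's: specialize \eqref{lks final other} (equivalently Theorems \ref{cs type formula} and \ref{integral conjecture}), collapse the $G^{4,2}_{2,6}$ by duplication, then identify the result with Bessel products.

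The parameter error. For $k=3$, $s=\frac32$ one has $\frac{2k-3+s}{2k-2}=\frac98$ and $\frac{sk-1}{2k-2}=\frac78$. So the function is the one in \eqref{k=3 s=3/2 eqn1}: upper parameters $\{\frac98,\frac58\}$, lower parameters $\{0,\frac12,\frac78,\frac{11}8\}$ and $\{\frac14,\frac34\}$. You have $\{\frac78,\frac58\}$ and $\{0,\frac12,\frac58,\frac98\}$. With the correct values the leftover pair is $\Gamma(-\frac18-t)\Gamma(\frac38-t)$, which also duplicates (to $\Gamma(-\frac14-2t)$), and one gets cleanly
\begin{equation*}
\mathcal{G}_3\left(\tfrac32,2\pi\sqrt{n}\right)=\pi\, G^{2,1}_{1,3}\left( \begin{matrix} \left\{\frac{5}{4}\right\},\left\{\right\}\\ \left\{0,\frac{7}{4}\right\}, \left\{\frac{1}{2}\right\} \end{matrix} \bigg | \pi^2 n\right).
\end{equation*}
This is what the paper obtains from NIST 16.19.4. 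Your ``$G^{2,2}_{2,3}$-type'' function is an artifact of the slip, and a residue comparison run on your integrand would simply fail.

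The missing step. With $X=\pi^2 n$, what has to be proved is
\begin{equation*}
G^{2,1}_{1,3}\left( \begin{matrix} \left\{\frac{5}{4}\right\},\left\{\right\}\\ \left\{0,\frac{7}{4}\right\}, \left\{\frac{1}{2}\right\} \end{matrix} \bigg | X\right)=-\sqrt{\pi}\,X^{3/4}\left(I_{\frac14}K_{\frac14}+3I_{-\frac34}K_{\frac34}\right)\left(\sqrt{X}\right).
\end{equation*}
``Split by a reflection identity'' does not say how to do this. The representation you need is $I_\nu(x)K_\nu(x)=\frac{1}{2\sqrt\pi}G^{2,1}_{1,3}$ at argument $x^2$ with parameters $\{\frac12\}$; $\{0,\nu\}$, $\{-\nu\}$, not a $G^{1,2}_{2,3}$.

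The paper proceeds as follows:
\begin{enumerate}
\item It writes both Bessel products in this form, using Prudnikov 8.4.23.22 together with NIST 16.19.1--16.19.3.
\item It multiplies the right side by $X^{-3/4}$ via NIST 16.19.2.
\item It finishes with two applications of the contiguous relation \eqref{b1 minus bq}.
\end{enumerate}

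Your power-series route also works, but only once executed. By Slater's theorem the left side is
\begin{equation*}
\frac{\Gamma(7/4)\Gamma(-1/4)}{\sqrt\pi}\,{}_1F_2\left(-\tfrac14;-\tfrac34,\tfrac12;X\right)+\frac{\sqrt\pi\,\Gamma(-7/4)}{2\Gamma(9/4)}X^{7/4}\,{}_1F_2\left(\tfrac32;\tfrac{11}4,\tfrac94;X\right),
\end{equation*}
so it contains only powers $X^{j}$ and $X^{7/4+j}$. On the Bessel side, expand $K_\nu$ through $I_{\pm\nu}$ and use the ${}_2F_3$ series of $I_\mu I_\nu$; you get powers $X^{j}$ and $X^{3/4+j}$. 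The $X^{3/4}$ term must therefore vanish. It does, precisely because $I_{\frac14}(x)K_{\frac14}(x)\to2$ as $x\to0^+$, while the $x^0$ coefficient of $I_{-\frac34}(x)K_{\frac34}(x)$ is $-\frac23$. This cancellation is where the coefficient $3$ comes from. The remaining coefficients then match; for example, the constant terms are both $-6\pi^{3/2}/\Gamma(1/4)^2$.

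As written, the proposal only asserts the content of the corollary rather than proving it.
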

Also, Theorem \ref{ktz} and \eqref{lks final other} together lead to a transformation between two infinite series involving the generalized divisor functions of which one counts the number of $(k-1)$-full divisors of $n$.
\begin{corollary}\label{limiting}
Let $k>2$ be a positive integer. Let $\psi(w)=\frac{\Gamma'(w)}{\Gamma(w)}$ be the digamma function. Then
\small\begin{align}\label{general limiting case}
&\zeta^2\left(\frac{1}{k}\right)+\frac{\gamma k-\log(2\pi)+\psi\left(\frac{1}{k}\right)}{2(k-1)}+\frac{2^{1/k-1}}{\sqrt{\pi(k-1)}\Gamma^2\left(\frac{1}{k}\right)}\sum_{n=1}^{\infty}\sigma^{(k-1)}_0(n)
\bigg\{ 	\mathcal{G}_k\left(s,2\pi n^{\frac{1}{k-1}}\right)
-\frac{\pi^{\frac{1}{k}+\frac{1}{2}}\sqrt{k-1}}{n^{\frac{1}{k}}\cos\left(\frac{\pi }{2k}\right)}\bigg\}\nonumber\\
&=\frac{\zeta(2-k)}{\frac{1}{k}-1}+\frac{1}{12k}\zeta(k)-\frac{1}{2\pi^2k}\sum_{n=1}^{\infty}\frac{\sigma^{(k-1)}_{k-2}(n)}{n^2}\bigg\{\pFq{1}{2}{-\frac{1}{2}-\frac{1}{2k}}{\frac{1}{2},\frac{1}{2}-\frac{1}{2k}}{-n^2\pi^2}+(2\pi n)^{1+\frac{1}{k}}\Gamma\left(-\frac{1}{k}\right)\sin\left(\frac{\pi}{2k}\right)\bigg\}.
\end{align}
\end{corollary}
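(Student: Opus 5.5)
The plan is to let $s\to\frac1k$ in two representations of $\mathcal{F}_k(s)$ that hold simultaneously near $s=\frac1k$: Theorem \ref{ktz} (valid for $\textup{Re}(s)>0$) and \eqref{lks final other} (valid for $0<\textup{Re}(s)<k-1$). Since $0<\frac1k<k-1$ for $k>2$, I will equate the right-hand sides on a punctured neighbourhood of $s=\frac1k$ and then pass to the limit. Both sides have singular terms at $s=\frac1k$: $\zeta(ks)$ on each side, and the term $\Gamma\left(\frac{ks-1}{k-1}\right)\zeta\left(\frac{ks-1}{k-1}\right)$ in \eqref{lks final other}. The corollary should be exactly the statement that these poles cancel, with the finite parts recorded.

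\emph{Step 1: the singular terms.} Put $\varepsilon=\frac{ks-1}{k-1}$, so that $ks-1=(k-1)\varepsilon$.
\begin{itemize}
\item Moving $\frac12\zeta(ks)$ from Theorem \ref{ktz} to the other side leaves
\begin{equation*}
\tfrac12\zeta(ks)=\frac{1}{2(k-1)\varepsilon}+\frac{\gamma}{2}+O(\varepsilon).
\end{equation*}
\item From $\Gamma(\varepsilon)=\frac1\varepsilon-\gamma+O(\varepsilon)$, $\zeta(0)=-\frac12$ and $\zeta'(0)=-\frac12\log(2\pi)$, we get
\begin{equation*}
\Gamma(\varepsilon)\zeta(\varepsilon)=-\frac{1}{2\varepsilon}+\frac{\gamma-\log(2\pi)}{2}+O(\varepsilon).
\end{equation*}
\item The prefactor $\frac{\Gamma\left(\frac{1-s}{k-1}\right)}{(k-1)\Gamma(s)}$ equals $\frac1{k-1}$ at $s=\frac1k$, because $\frac{1-s}{k-1}=\frac1k$ there. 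Its first-order term in $\varepsilon$ involves $\psi\left(\frac1k\right)$, coming from both $\Gamma\left(\frac{1-s}{k-1}\right)$ and $\Gamma(s)$, using $s-\frac1k=\frac{k-1}{k}\varepsilon$.
\end{itemize}
The $\frac1\varepsilon$ parts cancel, and the surviving constant should assemble into $\frac{\gamma k-\log(2\pi)+\psi(1/k)}{2(k-1)}$. The remaining finite terms are $\zeta^2(s)\to\zeta^2(\frac1k)$, $\frac{\zeta(ks-k+1)}{s-1}\to\frac{\zeta(2-k)}{\frac1k-1}$ and $\frac{s}{12}\zeta(ks+k-1)\to\frac1{12k}\zeta(k)$. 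I would do this bookkeeping carefully, since sign slips are likely here.

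\emph{Step 2: the series terms.} At $s=\frac1k$ we have $n^{-(ks-1)/(k-1)}=1$ and $\sigma^{(k-1)}_{ks-1}=\sigma^{(k-1)}_0$, which gives the left-hand series. On the Theorem \ref{ktz} side we get $\sigma^{(k-1)}_{k-2}(n)/n^2$ times $\int_1^\infty\cos(2\pi nx)x^{-1/k-2}\,dx$. I will write this integral as $\int_0^\infty-\int_0^1$, interpreting $\int_0^\infty$ by analytic continuation in the exponent.
\begin{itemize}
\item $\int_0^\infty\cos(2\pi nx)x^{-\nu-1}dx=(2\pi n)^{\nu}\Gamma(-\nu)\cos(\pi\nu/2)$ with $\nu=1+\frac1k$; this yields the term $(2\pi n)^{1+1/k}\Gamma(-\frac1k)\sin(\frac{\pi}{2k})$ after simplifying $\Gamma(-1-\frac1k)$ against the prefactor $s(s+1)$.
\item Expanding the cosine termwise in $\int_0^1$ gives the ${}_1F_2\left(-\tfrac12-\tfrac1{2k};\tfrac12,\tfrac12-\tfrac1{2k};-\pi^2n^2\right)$, again up to a constant factor.
\end{itemize}
The prefactor $-\frac{s(s+1)}{2\pi^2}$ then reduces to the stated coefficient once these constants are absorbed.

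\emph{Step 3: justifying the limit (the main obstacle).} I have to pass $s\to\frac1k$ inside both infinite series. For the Theorem \ref{ktz} series, integrating by parts shows the integral is $O(1/n)$ uniformly near $s=\frac1k$, so the series converges uniformly. For the Meijer $G$ series, I would use the asymptotic expansion of $\mathcal{G}_k$ (the case $M=0$ of \eqref{meijer G asymptotic}). It shows that the bracket $\mathcal{G}_k-\frac{\pi^{s+1/2}\sqrt{k-1}}{\cos(\pi s/2)}n^{(s-1)/(k-1)}$ is $O(n^{(s-k)/(k-1)})$ uniformly for $s$ in a compact neighbourhood of $\frac1k$. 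Together with $\sigma^{(k-1)}_{ks-1}(n)\ll n^{\delta}$ near $s=\frac1k$, this gives absolute and uniform convergence, so term-by-term limits are legitimate. Combining Steps 1–3 gives \eqref{general limiting case}.
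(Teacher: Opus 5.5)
Your proposal is correct and is essentially the paper's own proof. You equate \eqref{formula3} with \eqref{lks final other} and let $s\to\frac{1}{k}$, so that the poles of $\frac12\zeta(ks)$ and of the $\Gamma\left(\frac{ks-1}{k-1}\right)\zeta\left(\frac{ks-1}{k-1}\right)$ term cancel to leave $\frac{\gamma k-\log(2\pi)+\psi(1/k)}{2(k-1)}$, and you evaluate $\int_1^\infty\cos(2\pi nx)x^{-1/k-2}\,dx$ as $\frac{k}{k+1}$ times the bracketed ${}_1F_2$-plus-sine expression; your Laurent expansions (including the $-\varepsilon\,\psi(1/k)$ from the prefactor), the analytic-continuation evaluation of the integral, and the uniform-convergence justification in Step 3 all check out and just supply details the paper leaves to ``upon simplification.''
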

\normalsize
When we let $k=3$ in the above result, the  Meijer $G$-function appearing in the series on its resulting left-hand side can be expressed as a limiting case of a  recent generalization of the modified Bessel function studied in \cite{dkk}.  This generalization, which played an important in extending modular transformations for Eisenstein series $E_{2j}(z)$ on $\textup{SL}_2\left(\mathbb{Z}\right)$ to those for $E_a(z)$, where  $a\in\mathbb{C}$, is defined \cite[Equation (1.17)]{dkk} 
for $\nu\in\mathbb{C}\backslash\left(\mathbb{Z}\backslash\{0\}\right)$, and $z, \mu, w\in\mathbb{C}$ such that $\mu+w\neq-\frac{1}{2}, -\frac{3}{2}, -\frac{5}{2},\cdots$, by
%\backslash\{\cdots, -3, -2, -1, 1, 2, 3, \cdots\}
\begin{align}\label{def2varbessel}
	{}_{\mu}K_{\nu}(z, w)&:=\frac{\pi z^w 2^{\mu+\nu-1}}{\sin(\nu\pi)}\bigg\{\left(\frac{z}{2}\right)^{-\nu}\frac{\Gamma(\mu+w+\tfrac{1}{2})}{\Gamma(1-\nu)\Gamma(w+\tfrac{1}{2}-\nu)}\pFq12{\mu+w+\tfrac{1}{2}}{w+\tfrac{1}{2}-\nu,1-\nu}{\frac{z^2}{4}}\nonumber\\
	&\quad\quad\quad\quad\quad\quad-\left(\frac{z}{2}\right)^{\nu}\frac{\Gamma(\mu+\nu+w+\tfrac{1}{2})}{\Gamma(1+\nu)\Gamma(w+\tfrac{1}{2})}\pFq12{\mu+\nu+w+\tfrac{1}{2}}{w+\tfrac{1}{2},1+\nu}{\frac{z^2}{4}}\bigg\},
\end{align}
with 
\begin{equation*}
	{}_{\mu}K_{0}(z, w)=\lim_{\nu\to0}{}_{\mu}K_{\nu}(z, w).
\end{equation*}
Here  ${}_pF_q$ is the generalized hypergeometric function defined by
\begin{equation}\label{ghyp} {}_pF_q(a_1,\ldots,a_p;\, b_1,\ldots,b_q;\, w)
	=
	\sum_{\mu=0}^{\infty}
	\frac{\prod_{k=1}^{p} (a_k)_\mu}{\prod_{k=1}^{q} (b_k)_\mu}
	\,\frac{w^\mu}{\mu!},
\end{equation}
with $(a)_\mu=a(a+1)\cdots(a+\mu-1)$ being the shifted factorial. Thus, we have
\begin{corollary}\label{limiting k=3}
We have
\begin{align*}
&\zeta^2\left(\frac{1}{3}\right)+\frac{3\gamma-\log(2\pi)+\psi\left(\frac{1}{3}\right)}{4}+\frac{\sqrt{\pi}2^{\frac{7}{6}}}{\Gamma^2\left(\frac{1}{3}\right)}	\sum_{n=1}^{\infty}\sigma^{(2)}_0(n)\left({}_{-\frac{1}{6}}K_0(2\pi\sqrt{n},0)-\frac{2^{-\frac{5}{6}}\pi^{-\frac{1}{6}}}{\sqrt{3} n^{\frac{1}{3}}}\right)\nonumber\\
&=\frac{1}{8}+\frac{1}{36}\zeta(3)-\frac{1}{6\pi^2}\sum_{n=1}^{\infty}\frac{\sigma^{(2)}_1(n)}{n^2}\bigg\{\pFq{1}{2}{-\frac{2}{3}}{\frac{1}{2},\frac{1}{3}}{-n^2\pi^2}+\frac{1}{2}(2\pi n)^{\frac{4}{3}}\Gamma\left(-\frac{1}{3}\right)\bigg\},
\end{align*}
where  ${}_{-\frac{1}{6}}K_0(2\pi\sqrt{n},0)=\lim\limits_{s\to\frac{1}{3}}{}_{\frac{1}{2}-2s}K_{\frac{3s-1}{2}}\left(2\pi\sqrt{n},\frac{3s-1}{2}\right)$.
\end{corollary}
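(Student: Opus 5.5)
The plan is to specialize Corollary \ref{limiting} to $k=3$, simplify every elementary constant, and then identify the Meijer $G$-function $\mathcal{G}_3$ with the generalized modified Bessel function ${}_{\mu}K_{\nu}(z,w)$ of \eqref{def2varbessel}.

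\emph{Step 1: the elementary constants.} Put $k=3$ in \eqref{general limiting case}.

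On the right-hand side:
\begin{itemize}
\item $\zeta(2-k)/(\frac1k-1)=\zeta(-1)/(-\frac23)=\frac18$, using $\zeta(-1)=-\frac1{12}$.
\item $\frac{1}{12k}\zeta(k)=\frac1{36}\zeta(3)$.
\item The prefactor is $\frac{1}{2\pi^2k}=\frac1{6\pi^2}$ and the divisor function is $\sigma^{(2)}_{1}(n)$.
\item The ${}_1F_2$ parameters become $-\frac12-\frac16=-\frac23$ and $\frac12-\frac16=\frac13$.
\item $\sin(\pi/6)=\frac12$ produces the term $\frac12(2\pi n)^{4/3}\Gamma(-\frac13)$.
\end{itemize}

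On the left-hand side:
\begin{itemize}
\item The digamma term becomes $\frac{3\gamma-\log(2\pi)+\psi(1/3)}{4}$.
\item The prefactor is $\frac{2^{1/3-1}}{\sqrt{2\pi}\,\Gamma^2(1/3)}=\frac{2^{-7/6}}{\sqrt\pi\,\Gamma^2(1/3)}$.
\item The subtracted term $\frac{\pi^{5/6}\sqrt2}{n^{1/3}\cos(\pi/6)}$, multiplied by this prefactor, gives $\frac{2^{1/3}\pi^{1/3}}{\sqrt3\,\Gamma^2(1/3)\,n^{1/3}}$.
\end{itemize}
This last quantity is exactly $\frac{\sqrt\pi\,2^{7/6}}{\Gamma^2(1/3)}\cdot\frac{2^{-5/6}\pi^{-1/6}}{\sqrt3\,n^{1/3}}$. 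So the subtracted terms match, provided we prove
\[
\mathcal{G}_3\left(\tfrac13,2\pi\sqrt n\right)=2^{7/3}\pi\;{}_{-\frac16}K_0(2\pi\sqrt n,0).
\]

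\emph{Step 2: reduce $\mathcal{G}_3$ to a Bessel-type function.} I would prove the identity for general $s$ near $\frac13$:
\[
\mathcal{G}_3(s,y)=c(s)\,{}_{\frac12-2s}K_{\frac{3s-1}{2}}\left(y,\tfrac{3s-1}{2}\right),
\]
with an explicit constant $c(s)$ satisfying $c(\frac13)=2^{7/3}\pi$.

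For $k=3$, $\mathcal{G}_3(s,y)$ is $G^{4,2}_{2,6}$ at argument $(y/4)^4$. Its parameters are:
\begin{itemize}
\item $a$'s: $\frac{3+s}{4},\frac{1+s}{4}$;
\item $b$'s in the numerator: $0,\frac12,\frac{3s-1}{4},\frac{3s-1}{4}+\frac12$;
\item remaining $b$'s: $\frac14,\frac34$.
\end{itemize}
Write the Mellin–Barnes integral and apply the duplication formula $\Gamma(w)\Gamma(w+\frac12)=2^{1-2w}\sqrt\pi\,\Gamma(2w)$ to the pairs $(0,\frac12)$, $(\frac{3s-1}{4},\frac{3s+1}{4})$, $(\frac14,\frac34)$ and the two $a$'s. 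After substituting $2w\mapsto w$, the integral collapses to a $G^{2,1}_{1,3}$ in the variable $y^2/4$, with parameters $a=\frac{1+s}{2}$ (up to a shift) and $b=0,\frac{3s-1}{2};\frac12$.

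By Slater's theorem, summing residues at the two families of poles, this $G^{2,1}_{1,3}$ equals a combination of two ${}_1F_2(\,\cdot\,;\,\cdot\,;y^2/4)$ terms. I then compare term by term with \eqref{def2varbessel}, taking $\nu=w=\frac{3s-1}{2}$ and $\mu=\frac12-2s$. As a check, $\mu+w+\frac12=\frac{1-s}{2}$ should match the numerator parameter, and the factor $1/\sin(\nu\pi)$ should come from $\Gamma(\nu)\Gamma(1-\nu)$.

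\emph{Step 3: the limit and the main obstacle.} At $s=\frac13$ we have $\nu=0$. Both the $G$-function side (where the double $b$-parameters $0$ and $\frac{3s-1}{4}$ coalesce) and the definition of ${}_{\mu}K_\nu$ (through the removable singularity of $1/\sin\nu\pi$) require a limit. Since $\mathcal{G}_3(s,y)$ is continuous in $s$, the identity from Step 2 passes to the limit and gives exactly the definition ${}_{-\frac16}K_0=\lim_{s\to1/3}$ stated in the corollary. Also, Corollary \ref{limiting} itself already arises at $s=\frac1k$ as such a limit.

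I expect the main obstacle to be the bookkeeping of the constant $c(s)$, which collects powers of $2$ and $\pi$ from the duplications and the change of variable. I would first fix it by comparing the leading terms as $y\to0$ on both sides, and then confirm that $c(\frac13)=2^{7/3}\pi$, which is what Step 1 requires.
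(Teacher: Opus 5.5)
Your proposal is correct and takes essentially the same route as the paper: specialize Corollary \ref{limiting} to $k=3$, then use Slater's theorem to write the Meijer $G$-function as the two ${}_1F_2$ terms of \eqref{def2varbessel}; your duplication-formula reduction $\mathcal{G}_3(s,y)=\pi\,2^{\frac32-s}\,G^{2,1}_{1,3}\bigl(\cdot\,\big|\,\tfrac{y^2}{4}\bigr)$ is the simplification recorded in the paper's remark. Your constant works out to $c(s)=\pi\,2^{3-2s}$, so $c(\tfrac13)=2^{7/3}\pi$, exactly as your Step 1 requires.
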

\begin{remark}
	While the special case $k=3$ of the Meijer $G$-function occurring in \eqref{general limiting case} can be simplified to a simpler one, that is, to  $2^{\frac{7}{6}}\pi G_{1,3}^{2, 1}\left( \begin{matrix}
		\left\{\frac{2}{3}\right\},\left\{\right\}\\
		\left\{0,0\right\}, \left\{\frac{1}{2}\right\}
	\end{matrix} \bigg | n\pi^2\right)$, it is not possible to reduce it completely in terms of well-known special functions. However, by a tedious, but straightforward limit  calculation, it can be checked that for $z>0$,
	\begin{align*}
		{}_{-\frac{1}{6}}K_0(z,0)&=\frac{2^{-\frac{7}{6}}}{\sqrt{\pi}}\Gamma\left(\frac{1}{3}\right)\bigg\{\frac{\pi\sqrt{3}}{6}+\frac{3}{2}\log(3)-2\gamma-\left(\gamma+\frac{\pi\sqrt{3}}{6}+\psi\left(\frac{1}{3}\right)+\log(\sqrt{27}z^2)\right)\pFq12{\tfrac{1}{3}}{\frac{1}{2},1}{\frac{z^2}{4}}\nonumber\\
		&\quad+\sum_{m=1}^{\infty}\frac{\left(\frac{z^2}{4}\right)^m}{(m!)^2}\frac{(1/3)_m}{(1/2)_m}\left(\psi(m+1)+2\psi(2m+1)-\psi\left(m+\frac{1}{3}\right)\right)\bigg\},
	\end{align*}
	which renders an almost closed-form evaluation of the Meijer $G$-function. 
\end{remark}
By now the reader must have noticed the obvious fact as to why we could not take $k=2$ in Theorem \ref{cs type formula}. Also, in view of \eqref{by atkinson}, it is clear that 
\begin{equation}\label{f2 atkinson}	\mathcal{F}_2(s)=\frac{1}{2}\left(\zeta^2(s)+\zeta(2s)\right).
\end{equation}	
	  Thus, having such a simple representation for $\mathcal{F}_2(s)$ in hand, a priori it seems pointless to derive an analogue of Theorem \ref{cs type formula} when $k=2$. However, once such an alternate expression is obtained, equating it with the right-hand side of \eqref{f2 atkinson} now allows us to obtain a closed-form evaluation of a series involving $\sigma_{2s-1}(n)$ and the Bessel function of the second kind, which, to the best of our knowledge, was heretofore unknown.

 We first derive an analogue of Theorem \ref{cs type formula} for $1<\textup{Re}(s)<2$. 
\begin{theorem}\label{k=2 cs type formula}
	 Let $Y_{\nu}(z)$ be defined in \eqref{ybesse}. For $1<\textup{Re}(s)<2$,
	\begin{align*}
		\mathcal{L}_2(s)&=\zeta^{2}(s)-s\zeta(s-1)\zeta(s+1)+\frac{\Gamma(1-s)\Gamma\left(2s-1\right)\zeta(2s-1)}{\Gamma(s)}\nonumber\\
		&~~-\frac{(2\pi)^s}{2\Gamma(s)\cos\left(\frac{\pi s}{2}\right)}\sum_{n=1}^{\infty}\frac{\sigma_{2s-1}(n)}{n^{s-\frac{1}{2}}}\left(\frac{1}{\sqrt{n}}+\frac{\pi}{\sqrt{2}}\frac{(-1)^n}{\sin\left(\frac{\pi s}{2}\right)}Y_{s-\frac{1}{2}}(\pi n)+\frac{s(1-s)}{2\pi n^{3/2}}\cot\left(\frac{\pi s}{2}\right)\right).
	\end{align*}
\end{theorem}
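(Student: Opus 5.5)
To prove Theorem \ref{k=2 cs type formula}, I would redo the derivation of Theorem \ref{cs type formula} with $k=2$. The natural route is to write $\mathcal{L}_2(s)=\sum_{m\ge1}m^{-2s}\sum_{n\ge1}(1+n/m)^{-s}$ and treat the inner sum by Poisson summation in $n$, splitting off $\sum_{m,n}m^{-2s}\big((1+n/m)^{-s}-1\big)$.

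\textbf{Step 1 (Poisson summation).} I apply Poisson summation to $f_m(x)=(1+x/m)^{-s}-1$, the subtraction being what makes the sums converge in $1<\textup{Re}(s)<2$.
\begin{itemize}
\item The constant subtracted produces $\zeta(s)$-type terms, which combine with the diagonal to give $\zeta^2(s)$.
\item The zeroth Fourier coefficient produces the Gamma--zeta term $\Gamma(1-s)\Gamma(2s-1)\zeta(2s-1)/\Gamma(s)$, exactly as in the general case.
\item The remaining Fourier modes give, after the substitution $x\mapsto mx$ and regrouping $m$ with the frequency $j$ so that $mj=n$,
\begin{equation*}
\frac{2}{\Gamma(s)}\sum_{n\ge1}\frac{\sigma_{2s-1}(n)}{n^{2s-1}}\int_0^\infty\big((1+x)^{-s}-1\big)x^{-s}\cos(2\pi n x)\,dx .
\end{equation*}
This is the $k=2$ instance of the series in Theorem \ref{cs type formula}. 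The only adjustment is the exponent range, which is why the restriction $1<\textup{Re}(s)<2$ appears.
\end{itemize}

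\textbf{Step 2 (the integral).} I use Theorem \ref{integral conjecture} with $k=2$ and $y=2\pi n$; it applies precisely for $1<\textup{Re}(s)<2$. The Meijer function $\mathcal{G}_2(s,y)$ is $G^{3,2}_{2,4}$ with argument $(y/2)^2$, and it must be reduced to classical Bessel functions. The plan:
\begin{itemize}
\item Use the duplication formula to merge the pairs of bottom parameters.
\item Recognise the result as a product or combination of the form $\cos(y/2)\,Y_{s-1/2}(y/2)$, or an equivalent Lommel/Struve-type combination.
\end{itemize}
With $y=2\pi n$, the argument $y/2$ becomes $\pi n$, and $\cos(\pi n)=(-1)^n$. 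This accounts for the factor $(-1)^nY_{s-\frac12}(\pi n)$ in the statement.

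\textbf{Step 3 (main obstacle: the residual terms).} The reduction leaves non-oscillatory pieces that must be identified:
\begin{itemize}
\item a term $\propto n^{-1/2}$, which together with the $y^{s-1}\Gamma(1-s)\sin(\pi s/2)$ term produces the $1/\sqrt n$ entry;
\item a term $\propto n^{-3/2}$ with coefficient $s(1-s)\cot(\pi s/2)/(2\pi)$, coming from the first asymptotic terms of the $G$-function, which are Struve-type remainders.
\end{itemize}
These pieces are needed for convergence of the $n$-sum. To handle them, I would:
\begin{itemize}
\item compute the residues of the Mellin–Barnes integrand of $\mathcal{G}_2$ at its first two left poles, to read off the two coefficients;
\item check that exactly those two terms must be subtracted so that the bracket is $O(n^{-5/2})$;
\item add back what was subtracted. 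It sums to $\zeta(s+1)\zeta(s-1)$-type products via $\sum\sigma_{2s-1}(n)n^{-w}=\zeta(w)\zeta(w-2s+1)$, and I expect these to yield the $-s\zeta(s-1)\zeta(s+1)$ term.
\end{itemize}
The final step is bookkeeping of the constants $(2\pi)^s/(2\Gamma(s)\cos(\pi s/2))$ using the reflection formula $\Gamma(1-s)\Gamma(s)=\pi/\sin\pi s$.
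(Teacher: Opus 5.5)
\textbf{There is a genuine gap, and it sits exactly where $k=2$ differs from Theorem \ref{cs type formula}.} To expand $\zeta(-z)\zeta(1-2s-z)=\sum_n\sigma_{2s-1}(n)n^{z}$ you need a line $\textup{Re}(z)=\lambda<-2\textup{Re}(s)$. For $\textup{Re}(s)>1$ this line lies to the left of $-\textup{Re}(s)-1$. So, unlike the general case, the contour must cross the extra pole $z=-s-1$ of $\Gamma(s+z)$. It can stop before $z=-s-2$ precisely because $\textup{Re}(s)<2$.

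This pole accounts for the new terms. Its residue in \eqref{eq1} is the main term $-s\zeta(s-1)\zeta(s+1)$. The same pole inside $I_{n,2}$ produces the $\frac{s(1-s)}{2\pi n^{3/2}}\cot(\frac{\pi s}{2})$ entry. The $\frac{1}{\sqrt n}$ entry is just the term $-y^{s-1}\Gamma(1-s)\sin(\frac{\pi s}{2})$ of Theorem \ref{integral conjecture}, i.e.\ the residue at $z=-s$. Moreover, $\mathcal{G}_2(s,2\pi n)$ equals $(-1)^{n+1}\frac{\sqrt2\,\pi^{s+3/2}n^{s-1/2}}{\sin(\pi s)}Y_{s-\frac12}(\pi n)$ exactly, so there are no Struve-type leftovers to identify.

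Your Step 1 instead asserts that the $k=2$ case of Theorem \ref{cs type formula} holds in $1<\textup{Re}(s)<2$ with only a change of range. That is false. It is also set up in the wrong variable. Since $f_m(x)=(1+x/m)^{-s}-1\to-1$, neither its sum nor its integral converges, and Poisson summation in $n$ would lead to the Theorem \ref{ktz}-type expansion with $\sigma_{1-2s}$. The series you display comes from Poisson summation in $m$ at fixed $n$, applied to $x^{-s}\big((1+x/n)^{-s}-1\big)$. This function behaves like $-s\,x^{1-s}/n$ at $x=0$, so for $\textup{Re}(s)>1$ it is unbounded and Poisson summation does not apply. Correspondingly, the terms of the resulting series are of size $\sigma_{2s-1}(n)n^{-s-1}$, and their sum diverges for $\textup{Re}(s)\geq1$.

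\textbf{Step 3 has the right formal mechanism, but it adds back a divergent series.} The identity $\sum_n\sigma_{2s-1}(n)n^{-s-1}=\zeta(s+1)\zeta(2-s)$ holds only for $\textup{Re}(s)<1$. Note also that the product is $\zeta(s+1)\zeta(2-s)$, not $\zeta(s+1)\zeta(s-1)$. You need \eqref{zetafe} to turn the coefficient times $\zeta(s+1)\zeta(2-s)$ into $s\zeta(s-1)\zeta(s+1)$, which it does, formally. Only the $n^{-3/2}$ entry is an artificial subtraction; adding back the $n^{-1/2}$ entry would create a spurious $\zeta(s)\zeta(1-s)$.

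To make your route rigorous you would have to:
\begin{enumerate}
\item prove the unsubtracted identity in a strip where it converges, e.g.\ $0<\textup{Re}(s)<1$ (this already needs analytic continuation, since $x^{-s}((1+x/n)^{-s}-1)$ is not integrable at infinity there);
\item perform the subtraction in that strip;
\item continue analytically to $1<\textup{Re}(s)<2$, using the $O(n^{-5/2})$ decay of the bracket.
\end{enumerate}
The paper's contour shift past $z=-s-1$ gives all of this at once, with absolutely convergent series.

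Finally, your prefactor $\frac{2}{\Gamma(s)}$ should be $2$. Your own Poisson computation gives $2$, because the factor $\frac{1}{\Gamma(s)}$ already sits inside $F(z)$. The extra one in \eqref{mks after fe1}, inherited by Theorem \ref{cs type formula}, is a double count. Only with $2$ do your constants match the factor $\frac{(2\pi)^s}{2\Gamma(s)\cos(\frac{\pi s}{2})}$ in the statement.
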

Combining \eqref{f2 atkinson} and the above theorem, we get
\begin{corollary}\label{similar to atkinson's}
	For $-1<\textup{Re}(s)<2$, 
	\begin{align}\label{similar}
		&\sum_{n=1}^{\infty}\frac{\sigma_{2s-1}(n)}{n^{s-\frac{1}{2}}}\left(\frac{1}{\sqrt{n}}+\frac{\pi}{\sqrt{2}}\frac{(-1)^n}{\sin\left(\frac{\pi s}{2}\right)}Y_{s-\frac{1}{2}}(\pi n)+\frac{s(1-s)}{2\pi n^{3/2}}\cot\left(\frac{\pi s}{2}\right)\right)\nonumber\\
		&=\frac{2\Gamma(s)\cos\left(\frac{\pi s}{2}\right)}{(2\pi)^{s}}\left\{\frac{1}{2}\zeta^{2}(s)+\frac{1}{2}\zeta(2s)-s\zeta(s-1)\zeta(s+1)+\frac{\Gamma(1-s)\Gamma\left(2s-1\right)\zeta(2s-1)}{\Gamma(s)}\right\}.
	\end{align}	
\end{corollary}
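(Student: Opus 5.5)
The plan is to first establish \eqref{similar} in the strip $1<\textup{Re}(s)<2$ by purely algebraic means, and then extend it to $-1<\textup{Re}(s)<2$ by analytic continuation. For the first step, combine \eqref{f2} with $k=2$ and \eqref{f2 atkinson} to get $\mathcal{L}_2(s)=\mathcal{F}_2(s)-\zeta(2s)=\frac12\zeta^2(s)-\frac12\zeta(2s)$. Substitute this into the left-hand side of Theorem \ref{k=2 cs type formula}. Then isolate the series, multiply through by $-2\Gamma(s)\cos(\pi s/2)/(2\pi)^s$, and collect terms. The combination $\zeta^2(s)-\frac12\zeta^2(s)+\frac12\zeta(2s)$ produces exactly the brace on the right of \eqref{similar}. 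This proves the identity for $1<\textup{Re}(s)<2$.

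For the continuation, note that the right-hand side is meromorphic in $\mathbb{C}$. So it suffices to show that the left-hand series converges absolutely and uniformly on compact subsets of $-1<\textup{Re}(s)<2$, away from the isolated points where $\sin(\pi s/2)=0$ or where individual terms have poles. There the two sides agree as meromorphic functions, and removable singularities are handled by continuity. The key input is the Hankel asymptotic expansion
\begin{equation*}
Y_\nu(x)=\sqrt{\frac{2}{\pi x}}\left(\sin\left(x-\tfrac{\nu\pi}{2}-\tfrac{\pi}{4}\right)\left(1+O(x^{-2})\right)+\cos\left(x-\tfrac{\nu\pi}{2}-\tfrac{\pi}{4}\right)\left(\tfrac{4\nu^2-1}{8x}+O(x^{-3})\right)\right),
\end{equation*}
applied with $\nu=s-\frac12$ and $x=\pi n$.

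With these values the phase is $\pi n-\frac{\pi s}{2}$. Using $\sin(\pi n-\frac{\pi s}{2})=(-1)^{n+1}\sin\frac{\pi s}{2}$, the leading term of $\frac{\pi}{\sqrt2}\frac{(-1)^n}{\sin(\pi s/2)}Y_{s-\frac12}(\pi n)$ equals $-n^{-1/2}$, which cancels the first term in the bracket. Using $\cos(\pi n-\frac{\pi s}{2})=(-1)^n\cos\frac{\pi s}{2}$ together with $4\nu^2-1=4s(s-1)$, the second-order term equals $-\frac{s(1-s)}{2\pi n^{3/2}}\cot\frac{\pi s}{2}$, which cancels the third term. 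Hence the bracket is $O_s(n^{-5/2})$, uniformly for $s$ in compacts.

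Since $\sigma_{2s-1}(n)\ll n^{\max(2\textup{Re}(s)-1,0)+\epsilon}$, the $n$-th summand is
\begin{equation*}
\ll n^{\max(2\textup{Re}(s)-1,0)-\textup{Re}(s)+\frac12-\frac52+\epsilon}=n^{\max(\textup{Re}(s)-2,\,-\textup{Re}(s)-2)+\epsilon},
\end{equation*}
which is summable exactly when $-1<\textup{Re}(s)<2$. Weierstrass' theorem then gives holomorphy of the series in this strip, and the identity theorem extends \eqref{similar} from $1<\textup{Re}(s)<2$ to $-1<\textup{Re}(s)<2$.

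The main obstacle is verifying that the second-order Hankel coefficient cancels the $\cot(\pi s/2)$ term exactly, with correct signs, and that the $O(x^{-2})$ remainders are uniform in $s$ on compacts. Without that cancellation the continuation would only reach $0<\textup{Re}(s)<1$ or so. I would check this first using the explicit coefficient $(4\nu^2-1)/8$.
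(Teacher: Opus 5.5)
Your proof is correct and follows the paper's argument: the paper's proof only equates \eqref{by atkinson} with Theorem \ref{k=2 cs type formula}, and your Hankel-expansion continuation (the $n^{-1/2}$ and $\cot(\pi s/2)$ terms do cancel as you computed) supplies the extension to $-1<\textup{Re}(s)<2$, which the paper leaves implicit and only alludes to in the remark after the corollary. There is one arithmetic slip: the exponent simplifies to $\max(\textup{Re}(s)-3,\,-\textup{Re}(s)-2)+\epsilon$, not $\max(\textup{Re}(s)-2,\,-\textup{Re}(s)-2)+\epsilon$; your version would only give summability for $-1<\textup{Re}(s)<1$, while the corrected exponent gives exactly the claimed strip.
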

One can, of course, relax the condition $1<\textup{Re}(s)<2$  in Theorem \ref{k=2 cs type formula} by using the well-known asymptotic expansion of the Bessel function \cite[p.~202]{watson}, similar to what we did in Theorem \ref{analytically extended}, so as to meromorphically continue $\mathcal{F}_2(s)$. However, in view of \eqref{f2 atkinson}, it seems futile although one may be interested in getting identities for the series of the type in \eqref{similar} through this process. 

We record two particular cases of Corollary \ref{similar to atkinson's}.
\begin{corollary}\label{pc}
	We have
	\begin{align*}
		&\textup{(i)} 
		\sum_{n=1}^{\infty}d(n)\left((-1)^n Y_{0}(\pi n)+\frac{1}{\pi n^{\frac{1}{2}}}+\frac{1}{8\pi^2 n^{\frac{3}{2}}}\right)=\frac{1}{2\pi}\left(\gamma-\log(8\pi)+\zeta^2\left(\frac{1}{2}\right)-\zeta\left(-\frac{1}{2}\right)\zeta\left(\frac{3}{2}\right)\right),\\
		&\textup{(ii)}\sum_{n=1}^{\infty}\frac{\sigma_{2}(n)}{n}\left((-1)^nY_1(\pi n)+\frac{1}{\pi n^{\frac{1}{2}}}+\frac{3}{8\pi^2n^{\frac{3}{2}}}\right)=\frac{1}{6}-\frac{1}{8\pi^2}\left(\zeta^2\left(\frac{3}{2}\right)-\frac{3}{2}\zeta\left(\frac{1}{2}\right)\zeta\left(\frac{5}{2}\right)+\zeta(3)\right).
	\end{align*}
\end{corollary}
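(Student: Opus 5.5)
The plan is to obtain both identities by specializing Corollary \ref{similar to atkinson's}: take $s=\tfrac12$ for (i) and $s=\tfrac32$ for (ii). Both values lie in the strip $-1<\textup{Re}(s)<2$ where \eqref{similar} holds. The left-hand side of \eqref{similar} is an absolutely convergent series; by the asymptotic expansion of $Y_\nu$ the bracket is $O(n^{-5/2})$, locally uniformly in $s$. Hence the left-hand side is continuous in $s$, and where the right-hand side is singular I will read the identity as a limit $s\to\tfrac12$.

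For (ii), $s=\tfrac32$ involves no singularity and is pure substitution.
\begin{itemize}
\item The divisor data become $\sigma_{2}(n)/n$ and $Y_1(\pi n)$.
\item Since $\sin(3\pi/4)=1/\sqrt2$, the factor $\pi/(\sqrt2\sin(\pi s/2))$ equals $\pi$, and $\cot(3\pi/4)=-1$ gives the term $\tfrac{3}{8\pi n^{3/2}}$ (with $s(1-s)=-\tfrac34$).
\item Dividing the whole identity by $\pi$ gives exactly the bracket on the left of (ii).
\item On the right, the prefactor is $2\Gamma(\tfrac32)\cos(3\pi/4)/(2\pi)^{3/2}$.
\item The braces contain $\tfrac12\zeta^2(\tfrac32)+\tfrac12\zeta(3)-\tfrac32\zeta(\tfrac12)\zeta(\tfrac52)+\Gamma(-\tfrac12)\Gamma(2)\zeta(2)/\Gamma(\tfrac32)$.
\item The last term equals $-4\zeta(2)=-\tfrac{2\pi^2}{3}$, and this is what produces the constant $\tfrac16$ after the normalizations are multiplied out.
\end{itemize}

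For (i), the left side at $s=\tfrac12$ is handled as follows.
\begin{itemize}
\item We get $\sigma_0=d$, $n^{s-1/2}=1$ and $Y_0(\pi n)$.
\item The coefficient $\pi/(\sqrt2\sin(\pi/4))=\pi$, and the last term becomes $\tfrac{1}{8\pi n^{3/2}}$.
\item Dividing by $\pi$ again gives the stated bracket.
\end{itemize}

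The right side of (i) is the only nontrivial step, since $\tfrac12\zeta(2s)$ and $\Gamma(1-s)\Gamma(2s-1)\zeta(2s-1)/\Gamma(s)$ both have simple poles at $s=\tfrac12$. The prefactor $2\Gamma(s)\cos(\pi s/2)/(2\pi)^s$ tends to $1$. Because the left side has a finite limit, these poles must cancel, so I only need the constant terms. Put $\varepsilon=2s-1$.
\begin{itemize}
\item $\tfrac12\zeta(2s)=\tfrac1{2\varepsilon}+\tfrac\gamma2+O(\varepsilon)$.
\item $\Gamma(\varepsilon)\zeta(\varepsilon)=-\tfrac1{2\varepsilon}+\tfrac\gamma2-\tfrac12\log 2\pi+O(\varepsilon)$, using $\zeta(0)=-\tfrac12$ and $\zeta'(0)=-\tfrac12\log2\pi$.
\item $\Gamma(1-s)/\Gamma(s)=1-\varepsilon\,\psi(\tfrac12)+O(\varepsilon^2)$, with $\psi(\tfrac12)=-\gamma-2\log2$. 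Multiplied against $-\tfrac1{2\varepsilon}$, this contributes $-\tfrac\gamma2-\log 2$.
\end{itemize}
Summing these constant terms gives $\tfrac12(\gamma-\log(8\pi))$. The remaining terms are $\tfrac12\zeta^2(\tfrac12)$ and $-\tfrac12\zeta(-\tfrac12)\zeta(\tfrac32)$. Dividing by $\pi$ yields (i).

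The main obstacle is this constant-term bookkeeping. One must not forget the contribution of $\Gamma(1-s)/\Gamma(s)$, and one must justify the limit passage via continuity of the left side.
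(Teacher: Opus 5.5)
\paragraph{Same route as the paper, but (ii) does not come out as printed.} Your approach matches the paper's: take $s\to\frac12$ and $s=\frac32$ in Corollary \ref{similar to atkinson's}. Part (i) is correct. Your constant-term bookkeeping, including the $-\frac{\gamma}{2}-\log 2$ coming from $\Gamma(1-s)/\Gamma(s)$, reproduces the paper's limit $\frac{\gamma}{2}-\frac12\log(2\pi)-\log 2$. The continuity argument for the left side is a welcome extra.

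The gap is in (ii): you never finish the multiplication, and finishing it does not give the displayed identity.

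\paragraph{The computation at $s=\frac32$.}
\begin{itemize}
\item The prefactor is $2\Gamma(\frac32)\cos(\frac{3\pi}{4})/(2\pi)^{3/2}=-\frac{1}{4\pi}$. After dividing by $\pi$, every term in the braces is multiplied by $-\frac{1}{4\pi^2}$.
\item This turns $-4\zeta(2)$ into $\frac16$, as you said.
\item It turns $\frac12\zeta^2(\frac32)$ and $\frac12\zeta(3)$ into $-\frac{1}{8\pi^2}\zeta^2(\frac32)$ and $-\frac{1}{8\pi^2}\zeta(3)$.
\item It turns $-\frac32\zeta(\frac12)\zeta(\frac52)$ into $+\frac{3}{8\pi^2}\zeta(\frac12)\zeta(\frac52)$.
\end{itemize}
So the substitution actually yields
\begin{equation*}
\frac16-\frac{1}{8\pi^2}\left(\zeta^2\left(\tfrac32\right)-3\,\zeta\left(\tfrac12\right)\zeta\left(\tfrac52\right)+\zeta(3)\right),
\end{equation*}
with coefficient $3$, not the $\frac32$ in the statement.

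\paragraph{Numerical check.} A direct evaluation confirms the coefficient $3$.
\begin{itemize}
\item The $n=1$ term of the left side is about $-0.00257$ (using $Y_1(\pi)\approx 0.35887$).
\item The whole series sums to about $-0.00943$.
\item The corrected right side above is about $-0.00943$.
\item The printed right side is about $+0.0278$.
\end{itemize}
The same kind of check confirms (i): both sides are about $0.00458$. This shows that the term $-s\zeta(s-1)\zeta(s+1)$ in Corollary \ref{similar to atkinson's} is right.

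\paragraph{What to change.} Statement (ii) contains a misprint, which the paper's one-line proof ``let $s=3/2$'' inherits. Your claim that pure substitution gives exactly the displayed (ii) is therefore false. Carry out the last step explicitly and record the corrected coefficient $3$.
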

The first formula in the above corollary should be compared with a result of Atkinson \cite{atk}:
\begin{align*}
	\zeta^{2}\left(\frac{1}{2}\right)=\gamma-\log(8\pi)+\lim_{\delta\to 0}\left(\pi\sum_{n=1}^{\infty}d(n)(-1)^{n+1}Y_{0}(\pi n)e^{-n\delta}-\left(\frac{\pi}{8}\right)^{\frac{1}{2}}\left(\log(\delta^{-1})+\gamma-2\log(2)\right)\right).
2/1)\end{align*}
In fact, in the same paper \cite[p.~68]{atk}, Atkinson showed the ``equivalence'' of 
\begin{equation*}
	-\Gamma(1-s)2^{s-1/2}\pi^s\sum_{n=1}^{\infty}\frac{\sigma_{2s-1}(n)}{n^{s-\frac{1}{2}}}(-1)^nY_{s-\frac{1}{2}}(\pi n)
\end{equation*}
and 
\begin{equation*}
	\frac{1}{2}\left(\zeta^2(s)-\zeta(2s)-\frac{\Gamma(1-s)\Gamma(2s-1)\zeta(2s-1)}{\Gamma(s)}\right),
\end{equation*}
which should be compared with our Corollary \ref{similar to atkinson's}. However, while our result is an actual identity, Atkinson's result is in terms of what he calls ``generalized Abel summability''. See \cite{atk} for the definition of  a generalized Abel sum and other details.

\section{Preliminaries}\label{prelim}
The functional equation of the Riemann zeta function is given by \cite[p.~25]{titch}
\begin{align}\label{zetafe}
	\zeta(w)=2^{w}\pi^{w-1}\Gamma(1-w)\zeta(1-w)\sin\left(\frac{\pi w}{2}\right).	
\end{align}
The Dirichlet series of the function $\sigma^{(k)}_{z}(n)$ defined in the introduction is easily seen to be
\begin{equation}\label{gen_sigma_z}
	\sum_{n=1}^\infty \frac{\sigma^{(k)}_{s}(n)}{n^w}=\zeta(w) \zeta(k w-s) \qquad\left(\textup{Re}(w) > \max\left\{ 1, \frac{1+\textup{Re}(s)}{k}\right\}\right).
\end{equation}
%We now give the definition of the Meijer $G$-function. Suppose $m,n,p,q$ are integers such that $0\leq m \leq q$, $0\leq n \leq p$, and $a_1, \cdots, a_p$ and $b_1, \cdots, b_q$ are complex numbers such that $a_i - b_j \not\in \mathbb{N}$ for $1 \leq i \leq n$ and $1 \leq j \leq m$.   Then the Meijer $G$-function is defined by 
%\begin{align}\label{MeijerG}
%	G_{p,q}^{\,m,n} \!\left(  \,\begin{matrix} a_1,\cdots , a_p \\ b_1, \cdots , b_q \end{matrix} \; \Big| X   \right) := \frac{1}{2 \pi i} \int_L \frac{\prod_{j=1}^m \Gamma(b_j - w) \prod_{j=1}^n \Gamma(1 - a_j +w) X^w  } {\prod_{j=m+1}^q \Gamma(1 - b_j + w) \prod_{j=n+1}^p \Gamma(a_j - w)}\, dw.
%\end{align}
%Here $L$  goes from $-i \infty$ to $+i \infty$ separating the poles of $\Gamma(1-a_j+w)$  from those of $\Gamma(b_j-w)$.  The integral converges  absolutely if $p+q  < 2(m+n)$ and $|\arg(X)| < (m+n - \frac{p+q}{2}) \pi$.  In the case $p+q  = 2(m+n)$ and $\arg(X)=0$,  the integral converges absolutely if $ \left(  \Re(w) + \frac{1}{2} \right) (q-p) > \Re(\psi ) +1$,  where $\psi = \sum_{j=1}^q b_j - \sum_{j=1}^p a_j$. 

Next, we record Stirling's formula \cite[p.~224]{cop} for the Gamma function $\Gamma(z)$, $z=\sigma+it$, in a vertical strip $C\leq\sigma\leq D$ %which is instrumental in showing that the integrals along the horizontal segments of a rectangular contour for evaluating a contour integral tends to zero as the height $T$ of the contour tends to $\infty$.
\begin{equation}\label{strivert}
	|\Gamma(z)|=(2\pi)^{\tfrac{1}{2}}|t|^{\sigma-\tfrac{1}{2}}e^{-\tfrac{1}{2}\pi |t|}\left(1+O\left(\frac{1}{|t|}\right)\right)
\end{equation}
as $|t|\to\infty$. 
For $m\in\mathbb{N}, m>1$, the Gauss multiplication formula \cite[p.~52]{temme} states that
\begin{equation}\label{gmf}
	\prod_{k=1}^{m}\Gamma\left(z+\frac{k-1}{m}\right)=(2\pi)^{\frac{1}{2}(m-1)}m^{\frac{1}{2}-mz}\Gamma(mz).
\end{equation}
We also record the following well-known evaluation for $-1<\textup{Re}(s)<1$:
\begin{align}\label{sine evaluation}
	\int_{0}^{\infty}t^{s-1}\sin(t)\, dt=\Gamma(s)\sin\left(\frac{\pi s}{2}\right).
\end{align}
Let $F(z)$ and $G(z)$ be the Mellin transforms of $f(x)$ and $g(x)$ respectively. If $F(1-z)$ and $G(z)$ have a common strip of analyticity, then for any vertical line $Re(z)=c$ in the common strip, we have
\begin{align}\label{Parseval1}
	\frac{1}{2 \pi i} \int_{(c)} F(1-z)G(z) dz = \int_{0}^\infty f(t) g(t) dt,
\end{align}
under the assumption that the integral on the right-hand side exists and the conditions 
\begin{equation}\label{conditions}
	t^{c-1} g(t) \in L[0, \infty)\hspace{5mm}\text{and}\hspace{5mm}F(1-c-it) \in L(-\infty,  \infty)
\end{equation}
hold. Here, and throughout the sequel,  $\int_{(c)}$ will be used to denote the line integral $\int_{c-i\infty}^{c+i\infty}$. 

In proving Theorem \ref{integral conjecture}, which is essential to proving Theorem \ref{cs type formula}, we will be needing an extension of Parseval's formula due to Vu Kim Tuan \cite{vu} which is now discussed. This extension is useful in many instances where the usual version of Parseval's formula \cite[p.~83, Equation (3.1.11)]{kp} is inapplicable. Before we state this extension though, we need to define the concepts required  to state it, namely, a new function space and a certain class of functions. 

Denote by $\mathfrak{M}^{-1}(L)$ the space of functions $f(x)$ that are inverse Mellin transforms of functions $F(z)\in L\left(\frac{1}{2}-i\infty, \frac{1}{2}+i\infty\right)$ over the contour Re$(z)=1/2$ with norm $||f||_{\mathfrak{M}^{-1}(L)}$ equaling $\int_{0}^{\infty}\left|F\left(\frac{1}{2}+it\right)\right|\, dt$.

Let $\mathcal{K}$ be the set of functions $g(x)$ integrable on any segment $[\epsilon, E], 0<\epsilon<E<\infty$ having the property that
\begin{equation*}
	\mathfrak{M}\{g(x); z\}=G(z)=\int_{0}^{\infty}x^{z-1}g(x)\, dx,\hspace{8mm}\textup{Re}(z)=\frac{1}{2},
\end{equation*}
converges boundedly, which means that there exists a constant $C>0$ such that for almost all $\epsilon, E>0$ and $t\in\mathbb{R}$, we have $\left|\int_\epsilon^{E}x^{it-1/2}g(x)\, dx\right|<C$.

We are now ready to give the extension of Parseval's theorem \cite[Lemma 1]{vu}.
\begin{theorem}\label{vu kim tuan}
	Let $f(x)\in\mathfrak{M}^{-1}(L)$ and $g(x)\in\mathcal{K}$. Then the following convolution formula holds:
	\begin{equation*}
		\int_{0}^{\infty}f(x)g(xt)\, dx=\frac{1}{2\pi i}\int_{\left(\frac{1}{2}\right)}F(1-z)G(z)t^{-z}\, dz.
	\end{equation*}
\end{theorem}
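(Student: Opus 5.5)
The plan is to prove the formula directly from the definitions of the two classes. The idea is to represent $f$ by its Mellin inversion integral, swap the order of integration on truncated ranges, and then remove the truncation using the uniform bound built into the definition of $\mathcal{K}$. Throughout, the left-hand side is understood as the improper integral $\lim_{\epsilon\to0,\,E\to\infty}\int_\epsilon^E f(x)g(xt)\,dx$. Proving that this limit exists is part of the claim.

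\emph{Step 1 (Mellin inversion and Fubini on a truncated range).} Since $f\in\mathfrak{M}^{-1}(L)$, we have
\[
f(x)=\frac{1}{2\pi i}\int_{(\frac12)}F(z)x^{-z}\,dz,
\]
with $F$ absolutely integrable on $\textup{Re}(z)=\tfrac12$. In particular $|f(x)|\le x^{-1/2}\|f\|_{\mathfrak{M}^{-1}(L)}/\pi$. Fix $t>0$ and $0<\epsilon<E$. The function $(x,z)\mapsto F(z)x^{-z}g(xt)$ is absolutely integrable on $[\epsilon,E]\times(\tfrac12-i\infty,\tfrac12+i\infty)$: $F$ is integrable on the line, $|x^{-z}|=x^{-1/2}$ is bounded on $[\epsilon,E]$, and $g$ is integrable on $[\epsilon t,Et]$ because $g\in\mathcal{K}$. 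Fubini's theorem therefore gives
\[
\int_\epsilon^E f(x)g(xt)\,dx=\frac{1}{2\pi i}\int_{(\frac12)}F(z)\left(\int_\epsilon^E x^{-z}g(xt)\,dx\right)dz .
\]

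\emph{Step 2 (rescaling the inner integral).} Substitute $u=xt$ in the inner integral. This turns it into
\[
t^{z-1}\int_{\epsilon t}^{Et}u^{(1-z)-1}g(u)\,du .
\]
On the line $z=\tfrac12+i\tau$ we have $1-z=\tfrac12-i\tau$, so the inner integral equals $t^{z-1}\int_{\epsilon t}^{Et}u^{-i\tau-1/2}g(u)\,du$. By the definition of $\mathcal{K}$, this is bounded in modulus by $C\,t^{-1/2}$, uniformly in $\tau$, $\epsilon$ and $E$ (for almost all $\epsilon,E$). Moreover, as $\epsilon\to0$ and $E\to\infty$ it converges to $t^{z-1}G(1-z)$.

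\emph{Step 3 (passing to the limit).} The integrand in Step 1 is dominated by $C t^{-1/2}|F(z)|$, which is integrable on the line. Dominated convergence then lets us take $\epsilon\to0$, $E\to\infty$ (along admissible values) inside the $z$-integral. This shows that the left-hand side exists and equals
\[
\frac{1}{2\pi i}\int_{(\frac12)}F(z)G(1-z)t^{z-1}\,dz .
\]
The change of variables $z\mapsto1-z$ maps the line $\textup{Re}(z)=\tfrac12$ onto itself, and it turns this expression into $\frac{1}{2\pi i}\int_{(\frac12)}F(1-z)G(z)t^{-z}\,dz$, which is the asserted formula.

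\emph{Main obstacle.} The delicate point is Step 2. One must make precise that "converges boundedly" supplies both the uniform bound and the pointwise convergence of the truncated Mellin integrals to $G(1-z)$ for almost every $\tau$, which is all dominated convergence needs. One must also handle the "almost all $\epsilon,E$" caveat. To do this, I would take the limit along sequences $\epsilon_j\to0$ and $E_j\to\infty$ chosen outside the exceptional null sets. Then I would use the local integrability of $f(x)g(xt)$ to conclude that the limit of $\int_\epsilon^E f(x)g(xt)\,dx$ is independent of the chosen sequences.
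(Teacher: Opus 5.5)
The paper does not prove this statement. It quotes it as Lemma 1 of Vu Kim Tuan (see also Yakubovich's book), so there is no internal proof to compare against. Your argument is a correct, self-contained proof, and it is the natural one:
\begin{itemize}
\item Mellin inversion for $f$.
\item Fubini on $[\epsilon,E]\times\{\textup{Re}(z)=\tfrac12\}$. This is legitimate because $F$ is integrable on the line, $x^{-1/2}$ is bounded on $[\epsilon,E]$, and $g$ is integrable on $[\epsilon t,Et]$.
\item The substitution $u=xt$, which produces the truncated Mellin integral of $g$ at $1-z$; this point lies on the same line.
\item Dominated convergence with majorant $Ct^{-1/2}|F(z)|$. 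Since the limit is two-parameter, apply it along arbitrary sequences $\epsilon_j\to0$, $E_j\to\infty$.
\item Finally $z\mapsto 1-z$, which maps the line onto itself and preserves its orientation.
\end{itemize}

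The ``almost all $\epsilon,E$'' issue you flag is harmless, and easier than your sequence argument suggests. Since $g$ is locally integrable on $(0,\infty)$, the truncated integral $\Phi(\epsilon,E,\tau)=\int_\epsilon^E x^{i\tau-1/2}g(x)\,dx$ is jointly continuous in $(\epsilon,E,\tau)$. So the set where $|\Phi|>C$ is open. If it has measure zero it is empty, and the bound $|\Phi|\le C$ then holds for every $\epsilon,E,\tau$. The only other input you need from the definition of $\mathcal{K}$ is the (almost everywhere in $\tau$) convergence of the truncated integrals to $G$; the phrase ``converges boundedly'' asserts exactly this, and it is all dominated convergence requires.

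A minor aside: your pointwise bound on $|f(x)|$ assumes the norm is the integral of $|F|$ over the whole line. The paper writes $\int_0^\infty$, presumably a typo. You never use this bound anyway.
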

It is not difficult to see that the above theorem implies
\begin{corollary}\label{parseval_extension_tuan}
	Let $f(x)\in\mathfrak{M}^{-1}(L)$ and $g(x)\in\mathcal{K}$. Then
	\begin{align}\label{parseval_extension_tuan_eqn}
		\int_{0}^{\infty}f\left(\frac{t}{x}\right)g(x)\, \frac{dx}{x}=\frac{1}{2\pi i}\int_{\left(\frac{1}{2}\right)}F(z)G(z)t^{-z}\, dz.
	\end{align}
\end{corollary}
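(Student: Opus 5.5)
The plan is to apply Theorem \ref{vu kim tuan} not to $f$ itself but to the auxiliary function
\begin{equation*}
h(x):=\frac{1}{x}f\left(\frac{1}{x}\right),\qquad x>0,
\end{equation*}
and then undo the change of variables. The motivation is that the kernel $f(t/x)$ in \eqref{parseval_extension_tuan_eqn} is a reflected version of $f$. Reflection $x\mapsto 1/x$ turns $F(1-z)$ into $F(z)$ on the line $\textup{Re}(z)=\tfrac12$, which is exactly what converts the right-hand side of Theorem \ref{vu kim tuan} into the right-hand side of the corollary.

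\emph{Step 1: Mellin transform of $h$.} Substituting $y=1/x$ formally gives
\begin{equation*}
H(z)=\int_0^\infty x^{z-2}f\left(\tfrac1x\right)dx=\int_0^\infty y^{-z}f(y)\,dy=F(1-z).
\end{equation*}
Consequently $H(1-z)=F(z)$.

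\emph{Step 2: $h\in\mathfrak{M}^{-1}(L)$.} This is the only point needing care, since $f$ is only known to be an inverse Mellin transform. I would work from the inversion formula rather than from the formal integral in Step 1. Since $f\in\mathfrak{M}^{-1}(L)$, we have $f(y)=\frac{1}{2\pi i}\int_{(\frac12)}F(w)y^{-w}\,dw$ with $F$ integrable on $\textup{Re}(w)=\tfrac12$. Hence
\begin{equation*}
h(x)=\frac{1}{2\pi i}\int_{(\frac12)}F(w)\,x^{w-1}\,dw=\frac{1}{2\pi i}\int_{(\frac12)}F(1-z)\,x^{-z}\,dz,
\end{equation*}
using $w=1-z$, which maps the line $\textup{Re}=\tfrac12$ onto itself. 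Thus $h$ is the inverse Mellin transform of $H(z)=F(1-z)$. Moreover $\int|H(\tfrac12+it)|\,dt=\int|F(\tfrac12-it)|\,dt<\infty$, so $h\in\mathfrak{M}^{-1}(L)$ with the same norm as $f$.

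\emph{Step 3: apply the theorem and change variables.} With $h$ in place of $f$ and the given $g\in\mathcal{K}$, Theorem \ref{vu kim tuan} yields
\begin{equation*}
\int_0^\infty \frac1x f\left(\frac1x\right)g(xt)\,dx=\frac{1}{2\pi i}\int_{(\frac12)}F(z)G(z)t^{-z}\,dz.
\end{equation*}
On the left, substitute $u=xt$, so that $x=u/t$ and $dx=du/t$. The left-hand side becomes $\int_0^\infty f(t/u)\,g(u)\,du/u$, which is \eqref{parseval_extension_tuan_eqn}.

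The main (mild) obstacle is Step 2. Verifying membership via the inversion integral, rather than via the possibly non-absolutely convergent integral defining $F$, avoids any convergence issues. The change of variables in Step 3 is legitimate because it is a smooth bijection of $(0,\infty)$, applied to whatever sense the integral has in Theorem \ref{vu kim tuan}.
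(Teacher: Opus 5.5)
Your argument is correct. Applying Theorem \ref{vu kim tuan} to $h(x)=x^{-1}f(1/x)$, whose inverse-Mellin density on $\textup{Re}(z)=\tfrac12$ is $F(1-z)$, and then substituting $u=xt$ is the routine reduction the paper has in mind when it says the corollary is ``not difficult to see''. The paper gives no further details, and your Step 2, which checks $h\in\mathfrak{M}^{-1}(L)$ through the inversion integral rather than the formal Mellin integral, is the right way to make it rigorous.
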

\begin{remark}\label{extension_s}
	With the help of Cauchy's residue theorem, it can be seen that \eqref{parseval_extension_tuan_eqn} can be extended to any vertical strip containing the line $[1/2-i\infty, 1/2+i\infty]$ as long as it does not contain any poles of the integrand and the integrals along the horizontal segments of the rectangular contour approach zero as the height of the contour approaches infinity.
\end{remark}
\begin{remark}\label{parsevalcos}
	As mentioned in \cite[Corollary 1]{vu}, the cosine function belongs to the class $\mathcal{K}$ and hence the extension of Parseval's formula, that is, \eqref{parseval_extension_tuan_eqn} holds with $g(x)=\cos(x)$ and $f\in\mathfrak{M}^{-1}(L)$. It is this fact that will be employed in the proof of Theorem \ref{integral conjecture}.
\end{remark} 
These results are also given in \cite[p.~15-17]{yakubovich}. \\

The Bessel functions of the first and second kinds of order $\nu$ are defined by \cite[p.~40, 64]{watson}
\begin{align}
	J_{\nu}(z)&:=\sum_{m=0}^{\infty}\frac{(-1)^m(z/2)^{2m+\nu}}{m!\Gamma(m+1+\nu)} \hspace{9mm} (z,\nu\in\mathbb{C}),\nonumber\\
	Y_{\nu}(z)&:=\frac{J_{\nu}(z)\cos(\pi \nu)-J_{-\nu}(z)}{\sin{\pi \nu}}\hspace{5mm}(z\in\mathbb{C}, \nu\notin\mathbb{Z}),\label{ybesse}
\end{align}
with $Y_n(z)=\lim_{\nu\to n}Y_\nu(z)$ for $n\in\mathbb{Z}$. 
%The Bessel function of the second kind of order $\nu$ is defined for by \cite[p.~64]{watson-1944a}
The modified Bessel functions of the first and second kinds are defined by \cite[p.~77, 78]{watson}
\begin{align}
	I_{\nu}(z)&:=
	\begin{cases}
		e^{-\frac{1}{2}\pi\nu i}J_{\nu}(e^{\frac{1}{2}\pi i}z), & \text{if $-\pi<\arg(z)\leq\frac{\pi}{2}$,}\\
		e^{\frac{3}{2}\pi\nu i}J_{\nu}(e^{-\frac{3}{2}\pi i}z), & \text{if $\frac{\pi}{2}<\arg(z)\leq \pi$,}
	\end{cases}\label{besseli}\\
	K_{\nu}(z)&:=\frac{\pi}{2}\frac{I_{-\nu}(z)-I_{\nu}(z)}{\sin\nu\pi}\label{kbesse}
\end{align}
%and \cite[p.~78]{watson-1944a}
%\begin{equation}
%K_{\nu}(z):=\frac{\pi}{2}\frac{I_{-\nu}(z)-I_{\nu}(z)}{\sin\nu\pi}
%\end{equation}
respectively. When $\nu=n\in\mathbb{Z}$, $K_{n}(z)$ is interpreted as the limit $\nu\to n$ of the right-hand side of \eqref{kbesse}.

Finally, we define the Meijer $G$-function. Consider the integers $m,n,p,q$ such that $0\leq m \leq q$, $0\leq n \leq p$. Let $a_1, \cdots, a_p$ and $b_1, \cdots, b_q$ denote complex numbers such that $a_i - b_j \not\in \mathbb{N}$ for $1 \leq i \leq n$ and $1 \leq j \leq m$.   Then the Meijer $G$-function is defined by 
\begin{align}\label{MeijerG}
	G^{m, n}_{p, q}\left( \begin{matrix}
		\left\{a_1,\cdots,a_n\right\},\left\{a_{n+1}\cdots,a_p\right\}\\
		\left\{b_1,\cdots,b_m\right\}, \left\{b_{m+1},\cdots,b_q\right\}
	\end{matrix} \bigg | x\right):= \frac{1}{2 \pi i} \int_L \frac{\prod_{j=1}^m \Gamma(b_j - w) \prod_{j=1}^n \Gamma(1 - a_j +w) X^w  } {\prod_{j=m+1}^q \Gamma(1 - b_j + w) \prod_{j=n+1}^p \Gamma(a_j - w)}\, dw,
\end{align}
where the contour $L$  goes from $-i \infty$ to $+i \infty$ separating the poles of $\Gamma(1-a_j+s)$  from the poles of $\Gamma(b_j-s)$.  
The integral converges  absolutely if $p+q  < 2(m+n)$ and $|\arg(X)| < (m+n - \frac{p+q}{2}) \pi$.  In the case $p+q  = 2(m+n)$ and $\arg(X)=0$,  the integral converges absolutely if $ \left(  \re(w) + \frac{1}{2} \right) (q-p) > \re(\psi ) +1$,  where $\psi = \sum_{j=1}^q b_j - \sum_{j=1}^p a_j$.

\section{Meromorphic continuation of the Dirichlet series of Wigert's divisor function}\label{aco}
Here, we show that $\mathcal{F}_k(s)$ can be meromorphically continued to the entire $s$-complex plane.
\begin{proof}[Theorem \textup{\ref{ac}}][]

%We already know from \eqref{f2}, that for $\textup{Re}(s)>1$,
%\begin{align*}
%	\mathcal{F}_k(s)= \mathcal{L}_k(s) +\zeta(ks)
%\end{align*}
For $0<d=\textup{Re}(w)<\textup{Re}(s)$, we have  \cite[p.~196, Formula 5.36]{ober} 
\begin{align*}
	\frac{1}{(1+x)^{s}}&=	\frac{1}{2\pi i}\int_{(d)}\frac{\Gamma(s-w)\Gamma(w)}{\Gamma(s)}x^{-w}\, dw
	=\frac{1}{2\pi i}\int_{(c)}\frac{\Gamma(s+z)\Gamma(-z)}{\Gamma(s)}x^{z}\, dz,
\end{align*}
where, because of the change of variable $w=-z$, we now have $-\textup{Re}(s)<c=\textup{Re}(z)<0$. 
	
Therefore, for $-\textup{Re}(s)<c=\textup{Re}(z)<-1$, by
\eqref{l2i},
\begin{align}\label{eq1}
	\mathcal{L}_k(s)
	%&=\sum_{\substack{m=1\\ n=1}}^{\infty} \frac{1}{m^{ks} \left(1+\frac{n}{m^{k-1}}\right)^s}\nonumber \\
	&= \sum_{m,n=1}^{\infty}  \frac{1}{m^{ks}} \frac{1}{2\pi i} \int_{(c)} \frac{\Gamma(s+z) \Gamma(-z)}{\Gamma(s)} \left(\frac{n}{m^{k-1}}\right)^z dz \nonumber \\
	&=\frac{1}{2\pi i} \int_{(c)} \frac{\Gamma(s+z) \Gamma(-z)}{\Gamma(s)} \zeta(-z) \zeta(ks+(k-1)z) dz,
\end{align}
where the last step follows from interchanging the order of double sum and integration which is valid because of the exponential decay of the integrand as can be seen from Stirling's formula \eqref{strivert}. (Observe that $\textup{Re}(ks+(k-1)z)>\textup{Re}(s)>1$ as is Re$(-z)>1$.) 

To obtain the meromorphic continuation of $\mathcal{L}_k(s)$, we shift the line of integration from $\textup{Re}(z)=c$ to $\textup{Re}(z)=N+\varepsilon$ where $N$ is an odd positive integer and $\varepsilon>0$. In this process, we encounter poles of the integrand at $z=-1$ (due to $\zeta(-z)$), and at $z=0 ,1, 3, 5,\cdots, N$ (due to $\Gamma(-z)$). Let $R_a$ denote the residue of the integrand at the pole $z=a$. Then
\begin{align}\label{residue0}
R_{-1}=-\frac{\zeta(ks-(k-1))}{s-1},\hspace{4mm}R_{0}=\frac{1}{2}\zeta(ks)
\end{align}
and for $j=1, 3, \cdots, N$,
\begin{align}\label{residue1}
R_j=\frac{(-1)^{j+1} \Gamma(s+j)}{j! \Gamma(s)} \zeta(-j) \zeta(ks+(k-1)j).
\end{align}
From \eqref{eq1}, \eqref{residue0}, \eqref{residue1}, and \eqref{f2}, we observe that
\begin{align}\label{an expression}
	\mathcal{F}_k(s)&=\frac12 \zeta(ks)+\frac{\zeta(ks-(k-1))}{(s-1)}-\sum_{j=0}^{\frac{N-1}{2}} 
	\frac{  \Gamma(s+2j+1)}{(2j+1)! \Gamma(s)} \zeta(-2j-1) \zeta(ks+(k-1)(2j+1))\nonumber\\
	&\quad+\frac{1}{2\pi i} \int_{(N+\varepsilon)} \frac{\Gamma(s+z) \Gamma(-z)}{\Gamma(s)} \zeta(-z) \zeta(ks+(k-1)z) dz.
\end{align}
Since \eqref{strivert} implies that the last integral in \eqref{an expression} is holomorphic in $\textup{Re}(s)>-N-\varepsilon+1$, where $N$ is arbitrarily large, we get the meromorphic continuation of $\mathcal{F}_k(s)$ in the whole $s$-complex plane. It can be easily checked that $\mathcal{F}_k(s)$ has a double pole at $s=1$, a simple pole at $s=\frac1k$,  and simple poles at $s=\frac1k-\frac{(k-1)(2n+1)}{k}$ for $n =0, 1, 2,\cdots$.
\end{proof}

\section{An analogue of a result of Kiuchi, Tanigawa and Zhai}\label{ktz section}

%\subsection{Estimation of $\mathcal{J}(s)$}
Theorem \ref{ktz} is proved in this section.	We begin with some lemmas.
\begin{lemma}
For $Y>0, \textup{Re}(s)>0$,  and $0 < \eta=\textup{Re}(z) < 1$, define $f_s(Y)$ by
\begin{align}\label{f5}
	f_s(Y) := \frac{1}{2\pi i} \int_{(\eta)} \frac{\Gamma(s + z)}{\cos\left( \frac{\pi z}{2} \right)} Y^{-z} \, dz.
\end{align}
Then for $0< \textup{Re}(s)<1$ and $n\in\mathbb{N}$, 
\begin{align}
	f_s(2\pi n)&=\frac{2}{\pi} \Gamma(s+1)\int_{1}^{\infty} \sin(2\pi n x )x^{-s-1} dx.\label{f9}
\end{align}
%In fact \eqref{f8} can be rewritten as 
%\begin{lemma}
%	We have
%	\begin{align}\label{f88}
%		g_s(2\pi n)= s(2\pi n)^{s} \Gamma(-s) \sin \frac{\pi s}{2} +s\int_{1}^{\infty} \sin(2\pi n x )x^{-s-1} dx                       \ \mbox{for} \ 0< \Re(s)<1.
%	\end{align}
%	
\end{lemma}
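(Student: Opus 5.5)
We want $f_s(2\pi n)=\frac{2}{\pi}\Gamma(s+1)\int_1^\infty \sin(2\pi nx)x^{-s-1}\,dx$ for $0<\textup{Re}(s)<1$. The plan is to compute the Mellin transform of the right-hand side as a function of $Y$ in place of $2\pi n$, identify it with $\Gamma(s+z)/\cos(\pi z/2)$, and then invert.

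\textbf{Step 1 (rewrite the right side).} Put $h(Y):=\int_1^\infty \sin(Yx)x^{-s-1}\,dx$ and substitute $x\mapsto x/Y$, which gives
\begin{equation*}
h(Y)=Y^{s}\int_Y^\infty \sin(x)x^{-s-1}\,dx=Y^{s}\Big(\Gamma(-s)\sin\big(\tfrac{-\pi s}{2}\big)-\int_0^Y \sin(x)x^{-s-1}\,dx\Big).
\end{equation*}
The first piece is \eqref{sine evaluation} with exponent $-s$; this is valid because $-1<\textup{Re}(-s)<1$, which is exactly where $0<\textup{Re}(s)<1$ is used. Each remaining integral converges at $0$ since $\sin x\,x^{-s-1}\sim x^{-s}$.

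\textbf{Step 2 (Mellin transform of $h$).} Compute $\int_0^\infty Y^{z-1}h(Y)\,dY$ by interchanging the order of integration in $\int_0^\infty\int_1^\infty Y^{z-1}\sin(Yx)x^{-s-1}\,dx\,dY$. The inner $Y$-integral is $\Gamma(z)\sin(\pi z/2)x^{-z}$ by \eqref{sine evaluation}, valid for $-1<\textup{Re}(z)<1$. The $x$-integral then gives $1/(s+z)$. Hence, for $0<\textup{Re}(z)<1$,
\begin{equation*}
\int_0^\infty Y^{z-1}h(Y)\,dY=\frac{\Gamma(z)\sin(\pi z/2)}{s+z}.
\end{equation*}
This must be compared with $\frac{\pi}{2\Gamma(s+1)}\cdot\frac{\Gamma(s+z)}{\cos(\pi z/2)}$, so the two sides do not match in the same variable. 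The fix is to keep track of the sign convention in \eqref{f5}: $f_s(Y)=\frac{1}{2\pi i}\int_{(\eta)}G(z)Y^{-z}dz$ means $G$ is the Mellin transform of $f_s$. So I will instead evaluate \eqref{f5} directly by residues and match the result with Step 1.

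\textbf{Step 3 (residue evaluation of \eqref{f5}).} Shift the line in \eqref{f5} to the left. The integrand has poles at $z=-s-m$ from $\Gamma(s+z)$ and at $z=-1,-3,-5,\dots$ from $1/\cos(\pi z/2)$. Stirling's formula \eqref{strivert} gives $|\Gamma(s+z)/\cos(\pi z/2)|\ll|t|^{\textup{Re}(s+z)-1/2}$, so the horizontal segments can be controlled, at least for $Y$ in a suitable range, with analytic continuation in $Y$ afterwards.
\begin{itemize}
\item The residues at $z=-1-2j$ give $\frac{2}{\pi}(-1)^j\Gamma(s-1-2j)Y^{1+2j}$.
\item The residues at $z=-s-m$ give $\frac{(-1)^m}{m!}\frac{Y^{s+m}}{\cos(\pi(s+m)/2)}$.
\end{itemize}
Meanwhile, expanding $\int_0^Y\sin(x)x^{-s-1}dx=\sum_j\frac{(-1)^jY^{2j+1-s}}{(2j+1)!(2j+1-s)}$ in Step 1 yields the matching families. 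The $Y^{s}$ term $\Gamma(-s)\sin(-\pi s/2)\Gamma(s+1)$ combines via the reflection formula to $\frac{\pi}{2}\cdot\frac{1}{\cos(\pi s/2)}$ times the appropriate constant, and the series terms $\Gamma(s+1)/((2j+1)!(2j+1-s))$ match $\Gamma(s-1-2j)$-type terms through $\Gamma(s+1)/(s-2j-1)$ identities.

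\textbf{Main obstacle.} The hard part is that the residue series may be only asymptotic: the $Y^{s+m}$ series converges like the series for $\cos$, but its growth must be justified. Rather than summing, I would therefore prove Step 2 correctly with $z\mapsto -z$ bookkeeping, so that the Mellin transform of $\frac{2}{\pi}\Gamma(s+1)h(Y)$ equals $\frac{\Gamma(s+z)}{\cos(\pi z/2)}$ up to checking the gamma identity. I would carefully recompute that identity first (via $\int_0^\infty Y^{-z-1}h\,dY$ and the reflection formula), and then apply Mellin inversion, justified by absolute integrability on the line $\textup{Re}(z)=\eta$. The delicate point is the interchange of integrals, since $\sin(Yx)$ is only conditionally integrable; I would handle this with a convergence factor $e^{-\delta Y}$ and let $\delta\to0$.
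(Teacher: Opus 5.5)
There is a genuine gap. Your argument never uses that the argument of $f_s$ is $2\pi n$, and the identity is false as an identity in $Y>0$.

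Your own Step 2 already shows this. The Mellin transform of $\frac{2}{\pi}\Gamma(s+1)h(Y)$ is $\frac{2}{\pi}\Gamma(s+1)\frac{\Gamma(z)\sin(\pi z/2)}{s+z}$, which is not $\frac{\Gamma(s+z)}{\cos(\pi z/2)}$: the latter has poles at $z=-s-1,-s-2,\dots$ and the former does not. No $z\mapsto -z$ bookkeeping or gamma identity can reconcile them, so the Mellin-inversion plan in your last paragraph would be proving a false statement.

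The term-by-term matching in Step 3 fails for the same reason. The residues at $z=-s-m$ sum to $Y^{s}\bigl(\frac{\cos Y}{\cos(\pi s/2)}+\frac{\sin Y}{\sin(\pi s/2)}\bigr)$, whereas the right-hand side contains only the single term $\frac{Y^{s}}{\cos(\pi s/2)}$ of this type. Likewise, the coefficient of $Y$ is $\frac{2}{\pi}\Gamma(s-1)$ in $f_s(Y)$ but $\frac{2}{\pi}s\,\Gamma(s-1)$ on the right. Incidentally, the left shift gives a convergent expansion for every $Y>0$, because $\Gamma(s+z)=\pi/(\sin(\pi(s+z))\Gamma(1-s-z))$ decays factorially on far-left lines. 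So the worry about asymptotic series is unfounded, and your Stirling bound omits the decay factor $e^{-\pi|t|}$.

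The missing ingredient is the periodicity of the sine at $Y=2\pi n$. What is true for all $Y>0$ and $\textup{Re}(s)>0$ is
\begin{equation*}
f_s(Y)=\frac{2}{\pi}\Gamma(s+1)\int_0^\infty \frac{\sin(Yu)}{(1+u)^{s+1}}\,du=\frac{2}{\pi}\Gamma(s+1)\int_1^\infty \sin\bigl(Y(x-1)\bigr)x^{-s-1}\,dx .
\end{equation*}
Your Mellin method does prove this. The transform of the middle expression is $\frac{2}{\pi}\Gamma(s+1)\Gamma(z)\sin(\frac{\pi z}{2})\frac{\Gamma(1-z)\Gamma(s+z)}{\Gamma(s+1)}=\frac{\Gamma(s+z)}{\cos(\pi z/2)}$. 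To make this rigorous:
\begin{itemize}
\item Write $f_s(Y)=\frac{2}{\pi}\int_0^\infty \frac{Y t^{s}e^{-t}}{t^2+Y^2}\,dt$ as a Mellin convolution.
\item Insert $\frac{Y}{t^2+Y^2}=\int_0^\infty e^{-tu}\sin(Yu)\,du$; Fubini applies since the double integral is dominated by $\Gamma(\textup{Re}(s))$.
\end{itemize}
Only then set $Y=2\pi n$ and use $\sin(2\pi n(x-1))=\sin(2\pi nx)$; this even gives the lemma for all $\textup{Re}(s)>0$.

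Equivalently, in your residue route, set $Y=2\pi n$ before matching anything. The first family collapses to $\frac{(2\pi n)^s}{\cos(\pi s/2)}$. The odd series $\sum_j \frac{(-1)^jY^{2j+1}}{(-s)_{2j+2}}$ then equals $-\int_0^1\sin(2\pi nx)x^{-s-1}\,dx$, via $\frac{(2j+1)!}{(-s)_{2j+2}}=B(-s,2j+2)$ and $\sin(2\pi n(1-v))=-\sin(2\pi nv)$. This is exactly the Kiuchi--Tanigawa--Zhai formula the paper quotes. The paper then adds the complete integral $\int_0^\infty\sin(2\pi nx)x^{-s-1}\,dx$, and that step is where $0<\textup{Re}(s)<1$ is needed.
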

%
%
%\begin{proof}
%\end{proof}
%
%
\begin{proof}
	From \cite[p.~24]{kiuchitanigawa},
	\begin{align*}
		f_s(2\pi n)&= \frac{(2\pi n)^s}{\cos\left(\frac{\pi s}{2}\right)}-\frac{2}{\pi} \Gamma(s+1)\int_{0}^{1} \sin(2\pi n x )x^{-s-1} dx.
	\end{align*}
	However, from \eqref{sine evaluation}, for $0<\textup{Re}(s)<1$,
	\begin{align*}
		\int_{0}^{\infty} \sin(2\pi n x )x^{-s-1} dx=\frac{\pi}{2\Gamma(s+1)}\frac{(2\pi n)^s}{\cos\left(\frac{\pi s}{2}\right)}.
	\end{align*}
The representation for $f_s(2\pi n)$ in \eqref{f9} now follows from the above two equations.
\end{proof}
\begin{lemma} For $\textup{Re}(s)>0$,
	\begin{align}\label{f10}
		\int_{1}^{\infty} \sin(2\pi n x )x^{-s-1} dx=\frac{1}{2\pi n}- \frac{(s+1)}{2\pi n} \int_{1}^{\infty} \cos (2\pi n x )x^{-s-2} dx.
	\end{align}
	Hence 
	\begin{align}\label{f11}
		\int_{1}^{\infty} \sin(2\pi n x )x^{-s-1} dx \ll_s \frac{1}{ n}.
	\end{align}
\end{lemma}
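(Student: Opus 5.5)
The identity \eqref{f10} follows from a single integration by parts on $[1,\infty)$, and \eqref{f11} is then a trivial estimate. I take $u=x^{-s-1}$ and $dv=\sin(2\pi n x)\,dx$, so that $v=-\cos(2\pi n x)/(2\pi n)$. This gives
\begin{equation*}
\int_{1}^{\infty}\sin(2\pi n x)x^{-s-1}\,dx=\Big[-\frac{\cos(2\pi n x)}{2\pi n}x^{-s-1}\Big]_{1}^{\infty}-\frac{s+1}{2\pi n}\int_{1}^{\infty}\cos(2\pi n x)x^{-s-2}\,dx .
\end{equation*}

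\emph{Boundary terms.} For $\textup{Re}(s)>0$ we have $|x^{-s-1}|=x^{-\textup{Re}(s)-1}\to0$ as $x\to\infty$, so the upper boundary term vanishes. At $x=1$ we use $\cos(2\pi n)=1$, which leaves exactly $\frac{1}{2\pi n}$. This yields \eqref{f10}.

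\emph{Justification.} The only point needing care is that the left-hand integral converges only conditionally when $0<\textup{Re}(s)\le 0$ is not the issue here, but it is not absolutely convergent for small $\textup{Re}(s)$. I will therefore do the integration by parts on a finite interval $[1,X]$ and then let $X\to\infty$. On the right, the integral $\int_1^\infty \cos(2\pi n x)x^{-s-2}\,dx$ converges absolutely, since $\textup{Re}(s)+2>1$. Hence the limit of the right-hand side exists, and this simultaneously proves that the left-hand integral converges and that \eqref{f10} holds.

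\emph{The bound \eqref{f11}.} I bound the cosine integral trivially:
\begin{equation*}
\Big|\int_{1}^{\infty}\cos(2\pi n x)x^{-s-2}\,dx\Big|\le\int_1^\infty x^{-\textup{Re}(s)-2}\,dx=\frac{1}{\textup{Re}(s)+1}.
\end{equation*}
Inserting this into \eqref{f10} gives
\begin{equation*}
\int_{1}^{\infty}\sin(2\pi n x)x^{-s-1}\,dx\le\frac{1}{2\pi n}\Big(1+\frac{|s+1|}{\textup{Re}(s)+1}\Big)\ll_s\frac1n .
\end{equation*}

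There is no real obstacle in this argument; the truncation to $[1,X]$ is the only step that needs to be handled with any care.
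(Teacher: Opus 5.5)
Your proof is correct and follows the same approach as the paper. The paper also does a single integration by parts, where the boundary term $\frac{1}{2\pi n}$ comes from $\cos(2\pi n)=1$, and then bounds the absolutely convergent cosine integral trivially to get \eqref{f11}. One correction to your justification paragraph: for $\textup{Re}(s)>0$ the left-hand integral is already absolutely convergent, since $|x^{-s-1}|=x^{-\textup{Re}(s)-1}$ is integrable on $[1,\infty)$, so truncating to $[1,X]$ is harmless but is not needed to obtain convergence.
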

\begin{proof}
Use integration by parts to obtain \eqref{f10}. The bound in \eqref{f11} follows from the fact that the integral on the right-hand side of \eqref{f10} is absolutely convergent.
\end{proof}
We are now ready to prove the new representation of $\mathcal{F}_k(s)$ given in \eqref{formula3}.
\begin{proof}[Theorem \textup{\ref{ktz}}][]
	From \eqref{eq1}, for Re$(s)>1$,
\begin{align}\label{f3}
	\mathcal{L}_k(s) 
	= \frac{1}{2\pi i} \int_{(c)} 
	\frac{\Gamma(s+z)\Gamma(-z)}{\Gamma(s)} \zeta(-z) \zeta(ks + (k-1)z) \, dz,
\end{align}
where the contour of integration is the vertical line  $\textup{Re}(z) = c$ with $ -\textup{Re}(s) < c < -1$. 
%This condition ensures the convergence of the Euler product, as it implies $k\Re(s) + (k-1)c > 1$.
%The interchange of summation and integration in the last step is justified by the following estimate:
%\begin{align*}
%	\sum_{\substack{m=1 \\ n=1}}^{\infty} \frac{1}{m^{k\sigma}} 
%	\left| \int_{(c)} \frac{|\Gamma(s+z)\Gamma(-z)|}{|\Gamma(s)|} 
%	\left( \frac{n}{m^{k-1}} \right)^c dz \right|
%	\ll \zeta(-c) \zeta(k\sigma + (k-1)c) < \infty,
%\end{align*}
%where  $\sigma = \Re(s)$.
Now shift the line of integration in \eqref{f3} to the line $\textup{Re}(z) = \eta$, where $0 < \eta < 1$. In doing so, we encounter simple poles at $z = 0$ and $z = -1$ whose residues are already calculated in \eqref{residue0}. Applying the functional equation \eqref{zetafe} in the second step and invoking \eqref{gen_sigma_z} in the third, we obtain
\begin{align}\label{f4}
	\mathcal{L}_k(s) 
	&= -\frac{1}{2} \zeta(ks) 
	+ \frac{\zeta(ks - (k - 1))}{s - 1} 
	+ \frac{1}{2\pi i} \int_{(\eta)} \frac{\Gamma(s + z)\Gamma(-z)}{\Gamma(s)} \zeta(-z) \zeta(ks + (k - 1)z) \, dz \nonumber \\
	&= -\frac{1}{2} \zeta(ks) 
	+ \frac{\zeta(ks - (k - 1))}{s - 1} 
	+ \frac{1}{2} \cdot \frac{1}{2\pi i} \int_{(\eta)} \frac{\Gamma(s + z)}{\Gamma(s) \cos\left(\frac{\pi z}{2}\right)} \zeta(1 + z) \zeta(ks + (k - 1)z) (2\pi)^{-z} \, dz \nonumber \\
	&= -\frac{1}{2} \zeta(ks) 
	+ \frac{\zeta(ks - (k - 1))}{s - 1} 
	+ \frac{1}{2\Gamma(s)} \sum_{n = 1}^{\infty} \frac{\sigma_{k - 1 - ks}^{(k - 1)}(n)}{n} \cdot \frac{1}{2\pi i} \int_{(\eta)} \frac{\Gamma(s + z)}{\cos\left(\frac{\pi z}{2}\right)} (2\pi n)^{-z} \, dz.
	%&= -\frac{1}{2} \zeta(ks) 
	%	+ \frac{\zeta(ks - (k - 1))}{s - 1} 
	%	+ \frac{1}{2\Gamma(s)} \sum_{n = 1}^{\infty} \frac{\sigma_{k - 1 - ks}^{(k - 1)}(n)}{n} f_s(2\pi n),
\end{align}
 Let
\begin{align}\label{J}
	\mathcal{J}_k(s) := \frac{1}{2\Gamma(s)} \sum_{n = 1}^{\infty} \frac{\sigma_{k - 1 - ks}^{(k - 1)}(n)}{n} f_s(2\pi n),
\end{align}
where $f_s(Y)$ is defined in \eqref{f5} and $\max\{0,\frac{1-k\textup{Re}(s)}{k-1}\}<\eta<1$. This follows from the fact that $f_s(2\pi n) \ll n^{-\eta}$  and \eqref{gen_sigma_z}. Also, this bound implies that the sum in \eqref{J} converges absolutely and uniformly for $\textup{Re}(s) > 0$,
%\begin{align}
%	\sum_{n = 1}^{\infty} \frac{\sigma_{k - 1 - ks}^{(k - 1)}(n)}{n} f_s(2\pi n) 
%	&\ll \zeta(1 + \eta) \cdot \zeta(k \Re(s) + (k - 1)\eta) < \infty.
%\end{align}
 since then
\begin{align*}
	\sum_{n = 1}^{\infty} \frac{\sigma_{k - 1 - ks}^{(k - 1)}(n)}{n} f_s(2\pi n) 
	&\ll \zeta(1 + \eta) \cdot \zeta(k \textup{Re}(s) + (k - 1)\eta) \ll 1.
\end{align*}
Hence $\mathcal{J}_k(s)$ represents an analytic function of $s$ in Re$(s)>0$.

Now we would like to employ the \eqref{f9} in \eqref{J}. But the integral on the right-hand side of \eqref{f9} is valid only in $0<\textup{Re}(s)<1$. Thus,  if we initially assume  $0<\textup{Re}(s)<1$, then
\begin{align}\label{mad}
	\mathcal{J}_k(s) := \frac{s}{\pi} \sum_{n = 1}^{\infty} \frac{\sigma_{k - 1 - ks}^{(k - 1)}(n)}{n}\int_{1}^{\infty} \sin(2\pi n x )x^{-s-1} dx.
\end{align}
But \eqref{f11}, \eqref{gen_sigma_z} and the fact that $k\geq2$ implies that the series on the right-hand side of \eqref{mad} converges absolutely and uniformly for $\Re(s) > 0$, and hence represents an analytic function of $s$ in Re$(s)>0$. Since $\mathcal{J}_k(s)$ too is analytic in Re$(s)>0$ as seen before, by the principle of analytic continuation, we conclude that \eqref{mad} holds for Re$(s)>0$.

Now from \eqref{f4}, \eqref{mad} and \eqref{f2}, we arrive at 
\eqref{formula2} for Re$(s)>1$. But from the aforementioned discussion, all of the expressions involved on the right-hand side of \eqref{formula2} are analytic in Re$(s)>0$ except for simple poles at $s=1$ and $\frac{1}{k}$. However, since Theorem \ref{ac} implies that $\mathcal{L}_k(s)$ is also analytic in Re$(s)>0$ except for simple poles at $s=1$ and $\frac{1}{k}$, we obtain 
	\begin{align}\label{formula2}
	\mathcal{F}_k(s)=\frac{1}{2} \zeta(ks)+\frac{\zeta(ks-(k-1))}{(s-1)}+\frac{s}{\pi}  \sum_{n=1}^{\infty} \frac{\sigma_{k-1-ks}^{(k-1)}(n) }{n}   \int_{1}^{\infty} \sin(2\pi n x )x^{-s-1} dx.      
\end{align}
for Re$(s)>0$. Substituting \eqref{f10} in \eqref{formula2}, using \eqref{gen_sigma_z} and simplifying, we finally obtain \eqref{formula3}.
\end{proof}

\section{A Chowla-Selberg-type and Atkinson-type formula and its corollaries}\label{cs}
Two representations for $\mathcal{F}_k(s)$ were obtained in \eqref{formula3} and \eqref{an expression}.   Yet another one is derived below.
\begin{proof}[Theorem \textup{\ref{cs type formula}}][]
Consider \eqref{eq1}, where $-\textup{Re}(s)<c=\textup{Re}(z)<-1$. Now shift the line of integration from Re$(z)=c$ to Re$(z)=\lambda$, where
\begin{equation*}
	-\textup{Re}(s)-1<\lambda=\textup{Re}(z)<\frac{k}{1-k}\textup{Re}(s).
\end{equation*}
Observe that 
\begin{equation}\label{eq1.5}
\frac{k}{1-k}\textup{Re}(s)<\frac{1-k\textup{Re}(s)}{k-1}<-\textup{Re}(s).
\end{equation}
Thus, in the shifting process, we encounter a simple pole of the integrand at $z=\frac{1-ks}{k-1}$ (due to $\zeta(ks+(k-1)z$) and a simple pole at $z=-s$ (due to $\Gamma(s+z)$). Let $R_a$ denote the residue of the integrand at $z=a$. By the Cauchy residue theorem, we have
\begin{align}\label{lm residues}
	\mathcal{L}_k(s)=R_{-s}+R_{\frac{1-ks}{k-1}}+M_k(s),
\end{align}
where
\begin{align*}
	M_k(s):=	\frac{1}{2\pi i} \int_{(\lambda)} \frac{\Gamma(s+z) \Gamma(-z)}{\Gamma(s)} \zeta(-z) \zeta(ks+(k-1)z)\, dz.
\end{align*}
It is not difficult to see that
\begin{align}
	R_{-s}&=\zeta^2(s),
	\label{residues11}\\
	R_{\frac{1-ks}{k-1}}&=\frac{1}{(k-1)\Gamma(s)}\Gamma\left(\frac{1-s}{k-1}\right)\Gamma\left(\frac{ks-1}{k-1}\right)\zeta\left(\frac{ks-1}{k-1}\right).\label{residues22}
\end{align}
We now apply the functional equation \eqref{zetafe} with $w=ks+(k-1)z$ so as to get
\begin{align}\label{mks after fe}
	M_k(s)&=	2^{ks}\pi^{ks-1}\frac{1}{2\pi i} \int_{(\lambda)} \frac{\Gamma(s+z) \Gamma(-z)\Gamma(1-ks-(k-1)z)}{\Gamma(s)}\sin\left(\frac{\pi}{2}(ks+(k-1)z)\right)\nonumber\\
	&\qquad\qquad\qquad\qquad\times\zeta(-z)\zeta(1-ks-(k-1)z)(2\pi)^{(k-1)z} \, dz.
\end{align}
Invoking \eqref{gen_sigma_z}, we see that
\begin{align}\label{gen_sigma_z our case}
	\zeta(-z)\zeta(1-ks-(k-1)z)=\sum_{n=1}^{\infty} \frac{\sigma^{(k-1)}_{ks-1}(n)}{n^{-z}},
\end{align}
where Re$(z)<\min{\{-1,\frac{k}{1-k}\textup{Re}(s)\}}=\frac{k}{1-k}\textup{Re}(s)$, because of \eqref{eq1.5}. Therefore, substituting \eqref{gen_sigma_z our case} in \eqref{mks after fe} and interchanging the order of summation and integration, we see that
\begin{align}\label{mks after fe1}
	M_k(s)
	&=\frac{(2\pi)^{ks}}{\pi\Gamma(s)}\sum_{n=1}^{\infty} \sigma^{(k-1)}_{ks-1}(n)I_{n,k},
\end{align}
where
\begin{equation}\label{ink}
I_{n,k}:=\frac{1}{2\pi i}\int_{(\lambda)} F(z)G(z)\left((2\pi)^{(k-1)}n\right)^z \, dz,	
\end{equation}
and
\begin{align*}
	F(z)&=\frac{\Gamma(s+z) \Gamma(-z)}{\Gamma(s)},\\
	G(z)&=\Gamma(1-ks-(k-1)z)\sin\left(\frac{\pi}{2}(ks+(k-1)z)\right).
\end{align*}
One may now be tempted to use Parseval's formula \eqref{Parseval1} to evaluate \eqref{ink}, however, it is inapplicable since the first condition in \eqref{conditions} does not hold. One could have resorted to Theorem \ref{vu kim tuan}, however, as can be checked, its requirement that Re$(z)=1/2$ should lie in the intersection of the vertical strips of validity of the Mellin transforms $F(z)$ and $G(z)$ is not satisfied. This hurdle is circumvented as follows.

We shift the line of integration to the right from Re$(z)=\lambda$ to $Re(z)=c', c'>0$,  by constructing a rectangular contour $[\l-iT,c'-iT, c'+iT,\l+iT]$, then let $T\to\infty$, use \eqref{strivert} to show that the integrals along the horizontal segments tends to zero in the limit, then let $c'\to\infty$, show that the integral over the shifted line of integration also tends to zero in this limit, and finally consider the contribution of the poles of the integrand at $z=j, j\geq0$ (due to $\Gamma(-z)$), at $z=\frac{2m+1-ks}{k-1}, m\geq0$ (due to $\Gamma(1-ks-(k-1)z)$), and at $z=-s$ (due to $\Gamma(s+z)$) with the residues 
\begin{align*}
R_j&=-\frac{(-(2\pi)^{(k-1)}n)^j}{j!\Gamma(s)}\Gamma(j+s)\Gamma(1+j-k(s+j))\sin\left(\frac{\pi}{2}(ks+(k-1)j)\right),\nonumber\\
R_{\frac{2m+1-ks}{k-1}}&=\frac{(-1)^{m+1}}{(k-1)(2m)!\Gamma(s)}((2\pi n^{\frac{1}{k-1}})^{1+2m-ks}\Gamma\left(\frac{1+2m-s}{k-1}\right)\Gamma\left(\frac{ks-2m-1}{k-1}\right),\nonumber\\
R_{-s}&=(2\pi)^{-(k-1)s}n^{-s}\Gamma(1-s)\sin\left(\frac{\pi s}{2}\right).
\end{align*}
so that by Cauchy's residue theorem, we have
\begin{align*}
I_{n,k}&=\sum_{j=0}^{\infty}\frac{(-(2\pi)^{(k-1)}n)^j}{j!\Gamma(s)}\Gamma(j+s)\Gamma(1+j-k(s+j))\sin\left(\frac{\pi}{2}(ks+(k-1)j)\right)\nonumber\\
&\quad+\sum_{m=0}^{\infty}\frac{(-1)^{m}}{(k-1)(2m)!\Gamma(s)}((2\pi n^{\frac{1}{k-1}})^{1+2m-ks}\Gamma\left(\frac{1+2m-s}{k-1}\right)\Gamma\left(\frac{ks-2m-1}{k-1}\right)\nonumber\\
&\quad-(2\pi)^{-(k-1)s}n^{-s}\Gamma(1-s)\sin\left(\frac{\pi s}{2}\right)\nonumber\\
&=
\frac{(2\pi n^{\frac{1}{k-1}})^{1-ks}2^{s-2}}{\sqrt{\pi(k-1)}\Gamma(s)}\mathcal{G}_k(s,y)-(2\pi)^{-(k-1)s}n^{-s}\Gamma(1-s)\sin\left(\frac{\pi s}{2}\right),
\end{align*}
where we employed Theorem \ref{meijerg specialization}, proved in the Appendix, with $y=2\pi n^{\frac{1}{k-1}}$. Now with the same expression for $y$, invoking Theorem \ref{integral conjecture}, we arrive
\begin{equation}\label{ink expression}
I_{n,k}=\frac{(2\pi n^{\frac{1}{k-1}})^{1-ks}}{k-1}\int_{0}^{\infty}\left((1+x)^{-s}-1\right)x^{\frac{s+k-2}{1-k}}\cos\left(2\pi (nx)^{\frac{1}{k-1}}\right)\, dx.
\end{equation}

Theorem \ref{cs type formula} now follows from \eqref{f2}, \eqref{lm residues}, \eqref{residues11}, \eqref{residues22}, \eqref{mks after fe1} and \eqref{ink expression}. 

\end{proof}

\subsection{An integral evaluation}\label{int_eval}
 Theorem \ref{integral conjecture} is proved here. Part of the proof of this evaluation is broken down into two lemmas. We begin with the first.

\begin{lemma}\label{lemma:7.1}
Let $k\in\mathbb{N}, k\geq2,$ and $\textup{Re}(u)>0$. Let $0<\textup{Re}(s) <1 + (k-1)\textup{Re}(\nu)$. Then
\begin{align}\label{lemma:7.1 eqn}
	&\int_{0}^{\infty} \frac{t^{s-1}\cos(yt)}{(u^{k-1}+t^{k-1})^{\nu}}\,dt\nonumber\\
	&=
	\frac{2^{\nu-2} u^{\,s-(k-1)\nu}}{\sqrt{\pi(k-1)}\,\Gamma(\nu)}G^{k+1,2}_{2,2k}\left( \begin{matrix}
		\left\{\frac{2k-2-s}{2k-2}, \frac{k-1-s}{2k-2}\right\},\left\{\right\}\\
		\left\{0,\frac{1}{k-1},\cdots,\frac{k-2}{k-1}, \frac{\nu}{2}-\frac{s}{2k-2}, \frac{1+\nu}{2}-\frac{s}{2k-2}\right\}, \left\{\frac{1}{2k-2},\frac{3}{2k-2},\cdots,\frac{2k-3}{2k-2}\right\}
	\end{matrix} \bigg | \left(\tfrac{uy}{2k-2}\right)^{2k-2} \right).
\end{align}
\end{lemma}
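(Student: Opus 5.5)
The plan is to reduce to $u=1$, compute the integral as a Mellin–Barnes integral using the extended Parseval formula (Corollary \ref{parseval_extension_tuan}) with $g(x)=\cos x$, and then use the Gauss multiplication formula \eqref{gmf} to recognize the Meijer $G$-function.

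\emph{Step 1: scaling and Mellin transforms.} Substituting $t=ux$ turns the left-hand side of \eqref{lemma:7.1 eqn} into
\[
u^{s-(k-1)\nu}\int_0^\infty x^{s-1}(1+x^{k-1})^{-\nu}\cos(uyx)\,dx .
\]
This explains the prefactor $u^{s-(k-1)\nu}$ and the argument $uy$, so it suffices to take $u=1$. Put $h(x)=x^{s}(1+x^{k-1})^{-\nu}$. The substitution $x^{k-1}=v$ and the beta integral give
\[
\int_0^\infty x^{z-1}(1+x^{k-1})^{-\nu}\,dx=\frac{\Gamma\left(\frac{z}{k-1}\right)\Gamma\left(\nu-\frac{z}{k-1}\right)}{(k-1)\Gamma(\nu)}
\]
for $0<\textup{Re}(z)<(k-1)\textup{Re}(\nu)$. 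From \eqref{sine evaluation} we also have $\mathfrak{M}\{\cos x;w\}=\Gamma(w)\cos\left(\frac{\pi w}{2}\right)$ for $0<\textup{Re}(w)<1$.

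\emph{Step 2: Parseval.} Apply Corollary \ref{parseval_extension_tuan} with $g=\cos$, which lies in $\mathcal{K}$ by Remark \ref{parsevalcos}, and with $f(x)=h(1/x)/x$. This gives
\[
\int_0^\infty x^{s-1}(1+x^{k-1})^{-\nu}\cos(yx)\,dx=\frac{1}{2\pi i}\int_{(c)}\Gamma(w)\cos\left(\tfrac{\pi w}{2}\right)\frac{\Gamma\left(\frac{s-w}{k-1}\right)\Gamma\left(\nu-\frac{s-w}{k-1}\right)}{(k-1)\Gamma(\nu)}\,y^{-w}\,dw,
\]
valid on any line $\textup{Re}(w)=c$ with $0<c<1$ and $\textup{Re}(s)-(k-1)\textup{Re}(\nu)<c<\textup{Re}(s)$. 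The hypothesis $0<\textup{Re}(s)<1+(k-1)\textup{Re}(\nu)$ is exactly what makes this strip nonempty.

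I must check that $f\in\mathfrak{M}^{-1}(L)$. Its Mellin transform is a ratio-free product of two Gamma factors, so by \eqref{strivert} it decays exponentially on vertical lines and is integrable. If the strip does not contain $\textup{Re}(w)=\tfrac12$, I move the line using Remark \ref{extension_s}. The horizontal segments vanish by Stirling's formula, since the integrand decays like $|t|^{A}e^{-\pi|t|/(k-1)}$: the factor $\Gamma(w)\cos(\pi w/2)$ grows only polynomially, while the two Gamma factors from $h$ give the exponential decay.

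\emph{Step 3: conversion to $G$-form.} This is the bookkeeping step, and the one I expect to be the main obstacle. Use the identity $\Gamma(w)\cos(\pi w/2)=2^{w-1}\sqrt{\pi}\,\Gamma(w/2)/\Gamma((1-w)/2)$. Then change variables to $w=(2k-2)\xi$ and apply the multiplication formulas:
\begin{itemize}
\item \eqref{gmf} with $m=k-1$ splits $\Gamma((k-1)\xi)$ into $\prod_{j=0}^{k-2}\Gamma\left(\xi+\frac{j}{k-1}\right)$.
\item \eqref{gmf} with $m=k-1$ splits $\Gamma\left(\frac12-(k-1)\xi\right)$ into $\prod_{j}\Gamma\left(\frac{2j+1}{2k-2}-\xi\right)$ up to constants.
\item Duplication splits $\Gamma\left(\frac{s}{k-1}-2\xi\right)$ and $\Gamma\left(\nu-\frac{s}{k-1}+2\xi\right)$ into the pairs $\Gamma\left(\frac{s}{2k-2}-\xi\right)\Gamma\left(\frac{s}{2k-2}+\frac12-\xi\right)$ and $\Gamma\left(\frac{\nu}{2}-\frac{s}{2k-2}+\xi\right)\Gamma\left(\frac{1+\nu}{2}-\frac{s}{2k-2}+\xi\right)$.
\end{itemize}
Replacing $\xi$ by $-\xi$ matches \eqref{MeijerG}, with the following parameters:
\begin{itemize}
\item $m=k+1$ numerator factors $\Gamma(b_j-\xi)$, with $b_j\in\{0,\frac{1}{k-1},\dots,\frac{k-2}{k-1},\frac{\nu}{2}-\frac{s}{2k-2},\frac{1+\nu}{2}-\frac{s}{2k-2}\}$;
\item $n=2$ factors $\Gamma(1-a_j+\xi)$, with $a_j=1-\frac{s}{2k-2}$ and $\frac12-\frac{s}{2k-2}$, which are exactly $\frac{2k-2-s}{2k-2}$ and $\frac{k-1-s}{2k-2}$;
\item $k-1$ denominator factors $\Gamma(1-b_j+\xi)$, with $b_j=\frac{2j+1}{2k-2}$.
\end{itemize}
The power $y^{-w}$, together with the $2^{w}$ and $(k-1)^{\pm(2k-2)\xi}$ factors produced by \eqref{gmf}, assembles into $\left(\frac{y}{2k-2}\right)^{(2k-2)\xi}$. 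The remaining constants (the $\sqrt{\pi}$, $(2\pi)^{(k-2)/2}$ factors, $2^{\nu}$ from duplication, the Jacobian $2k-2$, and $1/((k-1)\Gamma(\nu))$) should collapse to $\frac{2^{\nu-2}}{\sqrt{\pi(k-1)}\Gamma(\nu)}$. I would verify this constant against the case $k=2$, where the integral is a classical Fourier cosine transform of $t^{s-1}(1+t)^{-\nu}$.

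Finally, I must check that the line of integration separates the two families of poles, as the definition of the $G$-function requires. This follows from the strip conditions in Step 2.
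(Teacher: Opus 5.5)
You follow the paper's route: Mellin transforms of $t^{s-1}(u^{k-1}+t^{k-1})^{-\nu}$ and of $\cos$, then Tuan's extended Parseval formula with $g=\cos\in\mathcal{K}$, then $w=(2k-2)\xi$ with duplication and the Gauss multiplication formula. Your parameter identification is right, and the constant does collapse to $\frac{2^{\nu-2}}{\sqrt{\pi(k-1)}\,\Gamma(\nu)}$.

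The gap is in Step 2. There you assert the Mellin--Barnes representation on every line of the strip, and propose to ``move the line using Remark \ref{extension_s}'' when $\textup{Re}(w)=\frac12$ is not in it. But Theorem \ref{vu kim tuan} and Corollary \ref{parseval_extension_tuan} give the identity only on $\textup{Re}(w)=\frac12$, and only for $f\in\mathfrak{M}^{-1}(L)$. Every element of that space satisfies $|f(x)|\le Cx^{-1/2}$. The function you must feed in (say $x^{s-1}(1+x^{k-1})^{-\nu}$, in the form of Theorem \ref{vu kim tuan}) violates this near $0$ when $\textup{Re}(s)<\frac12$, and near $\infty$ when $\textup{Re}(s)>\frac12+(k-1)\textup{Re}(\nu)$. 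Put differently, the Mellin transform you need does not exist on $\textup{Re}(w)=\frac12$. So outside $\frac12<\textup{Re}(s)<\frac12+(k-1)\textup{Re}(\nu)$ there is no identity on that line to start from. Remark \ref{extension_s} only deforms the contour once the formula already holds there, so it cannot help. Your exponential-decay check of $f\in\mathfrak{M}^{-1}(L)$ is valid only on lines inside the fundamental strip. Note also that your strip is nonempty only if, in addition, $\textup{Re}(\nu)>0$.

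The missing step is the one the paper uses. Prove the formula for $\frac12<\textup{Re}(s)<\frac12+(k-1)\textup{Re}(\nu)$, then extend it to $0<\textup{Re}(s)<1+(k-1)\textup{Re}(\nu)$ by analytic continuation in $s$. This needs two facts you do not supply.
First, the left-hand integral must converge locally uniformly in $s$ on the whole strip. It is only conditionally convergent at $\infty$ once $\textup{Re}(s)\ge(k-1)\textup{Re}(\nu)$; the paper handles this with one integration by parts against $\sin(yt)/y$.
Second, the $G$-function side must be analytic there, which holds because the two pole families stay separated for $\textup{Re}(s)>0$ and $\textup{Re}(\nu)>0$.

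A smaller point: the substitution $t=ux$ reduces to $u=1$ only for $u>0$. For non-real $u$ the new path is a ray off $(0,\infty)$, and it cannot be rotated back because $\cos(uyx)$ grows exponentially for real $x$. Either keep $u$ inside $f$ and take its Mellin transform directly, as the paper does, or continue analytically in $u$.

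Finally, with your $h$ the correct choice is $f(x)=h(1/x)$, not $h(1/x)/x$, although your displayed Mellin--Barnes integral is the right one.
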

\begin{proof}
Let
\begin{align}\label{IBP1}
	I := \int_{0}^{\infty} \frac{t^{s-1}\cos(yt)}{(u^{k-1}+t^{k-1})^{\nu}}\, dt.
\end{align}
We first show that $I$ converges for $0<\textup{Re}(s) < 1 + (k-1)\textup{Re}(\nu)$.
Consider
\begin{align*}
	I_{\varepsilon,M} := \int_{\varepsilon}^{M} \frac{t^{s-1}\cos(yt)}{(u^{k-1}+t^{k-1})^{\nu}}\, dt.
\end{align*}
Applying integration by parts, we have
\begin{align}\label{IBPOO}
	I_{\varepsilon,M} &= 
	\left[ \frac{t^{s-1}}{(u^{k-1}+t^{k-1})^{\nu}} \frac{\sin(yt)}{y} \right]_{\varepsilon}^{M} 
	- \frac{1}{y} \int_{\varepsilon}^{M} 
	\Biggl(
	\frac{(s-1)t^{s-2}}{(u^{k-1}+t^{k-1})^{\nu}}
	- \frac{\nu(k-1) t^{s+k-3}}{(u^{k-1}+t^{k-1})^{\nu+1}}
	\Biggr) \sin(yt) \, dt.
\end{align}

Employing the fact $\lim_{u\rightarrow 0^+} \frac{\sin u}{u}=1$ and $\textup{Re}(s)<1+(k-1)\textup{Re}(\nu)$, we see
\begin{align*}
	\lim_{\varepsilon\rightarrow 0^+} \frac{\varepsilon^{\textup{Re}(s)-1}}{(u^{k-1}+\varepsilon^{k-1})^{\textup{Re}(\nu)}} \frac{\sin(y\varepsilon)}{y} &=0, \quad \text{if } \textup{Re}(s)>0,
	\\
	\lim_{M\rightarrow \infty} \frac{M^{\textup{Re}(s)-1}}{(u^{k-1}+M^{k-1})^{\textup{Re}(\nu)}} \frac{|\sin(yM)|}{y} &\leq \frac{1}{|y|} \lim_{M\rightarrow \infty} M^{\textup{Re}(s)-1-(k-1)\textup{Re}(\nu)} = 0,
\end{align*}
provided $\textup{Re}(s) < 1 + (k-1)\textup{Re}(\nu)$.
Similarly, one can show the integrals in \eqref{IBPOO} converges as $\varepsilon \to 0^+$ and $M \to \infty$.  Therefore the integral in \eqref{IBP1} converges whenever $0<\textup{Re}(s) < 1 + (k-1)\textup{Re}(\nu)$.

Now let 
\begin{equation*}
	f(t):=\frac{t^{s-1}}{(u^{k-1}+t^{k-1})^{\nu}}\hspace{5mm}\text{and}\hspace{5mm} g(t):=\cos(yt).
\end{equation*}
Let $F(w)$ and $G(w)$ denote the Mellin transforms of $f$ and $g$ respectively. Using the Euler's beta integral formula and the well-known Mellin transform of the cosine function, we see that
	\begin{align*}
	F(w) &= \int_0^\infty t^{w-1} f(t) \, dt
	= \frac{u^{w+s-1-(k-1)\nu}}{k-1} \frac{\Gamma\!\left(\frac{w+s-1}{k-1}\right) \Gamma\!\left(\nu - \frac{w+s-1}{k-1}\right)}{\Gamma(\nu)},
\end{align*}
provided $\ 1-\textup{Re}(s)<\textup{Re}(w)<1-\textup{Re}(s)+(k-1)\textup{Re}(\nu)$,
and
\begin{align}\label{MT2}
	G(w) &= \int_0^\infty t^{w-1} g(t) \, dt
	= y^{-w} \Gamma(w) \cos\!\left(\frac{\pi w}{2}\right),
\end{align}
provided $0 < \textup{Re}(w) < 1$.

We would now like to apply the extended Parseval's formula in Theorem \ref{vu kim tuan} keeping Remark \ref{parsevalcos} in mind. However, this requires that  
$\textup{Re}(w)=\frac12$ lie in the region
\begin{align*}
	\max\{0, \textup{Re}(s)-(k-1) \textup{Re}(\nu)\}< \textup{Re}(w)< \min\{\textup{Re}(s), 1\}.
\end{align*}
which in turn implies
		$\frac12< \textup{Re}(s)<\frac12+(k-1)\textup{Re}(\nu).$ (Note that this also implies Re$(\nu)>0$.)
		
Thus, invoking Theorem \ref{vu kim tuan} and employing the change of variable $w=2(k-1)\xi$ in the second step, we see that for $\frac12< \textup{Re}(s)<\frac12+(k-1)\textup{Re}(\nu)$,
	\begin{align*}
	I &= \frac{u^{s-(k-1)\nu}}{2\pi i (k-1) \Gamma(\nu)}
	\int_{(\frac12)} (uy)^{-w} \Gamma(w) \cos\!\left(\frac{\pi w}{2}\right) 
	\Gamma\!\left(\frac{s-w}{k-1}\right) \Gamma\!\left(\nu - \frac{s-w}{k-1}\right) \, dw.\\
	&=\frac{2 \pi u^{s-(k-1)\nu}}{2\pi i \Gamma(\nu)}\int_{\left(\frac{1}{4k-4}\right)} \frac{\Gamma((2k-2)\xi) \Gamma\left(\frac{s}{k-1}-2\xi\right) \Gamma\left(\nu-\frac{s}{k-1}+2\xi\right)}{\Gamma \left(\frac12-(k-1)\xi\right) \Gamma\left(\frac12+(k-1)\xi\right)} ((uy)^{2k-2})^{-\xi} d\xi\\
	&=\frac{\sqrt{ \pi} u^{s-(k-1)\nu}}{2\pi i \Gamma(\nu)} \int_{\left(\frac{1}{4k-4}\right)} \frac{\Gamma((k-1)\xi) \Gamma\left(\frac{s}{k-1}-2\xi\right) \Gamma\left(\nu-\frac{s}{k-1}+2\xi\right)}{\Gamma \left(\frac12-(k-1)\xi\right) } \left((\tfrac{uy}{2})^{2k-2}\right)^{-\xi} d\xi,
\end{align*}
where in the last step we used the duplication formula for Gamma function. Now using the Gauss multiplication formula \eqref{gmf} multiple times to simplify the integrand, we finally arrive at 
	\begin{align}\label{intef}
	I&=\frac{2^{\nu-2}u^{s-(k-1)\nu}}{2\pi i\sqrt{\pi(k-1)}\Gamma(\nu)}
	\int_{(\frac{1}{4k-4})}
	\frac{
		\Gamma\!\left(\frac{s}{2k-2}-\xi\right)
		\Gamma\!\left(\frac12+\frac{s}{2k-2}-\xi\right)
		\prod_{j=0}^{k-2}\Gamma\!\left(\xi+\frac{j}{k-1}\right)
			}{\prod_{j=1}^{k-1}\Gamma\!\left(\frac{2j-1}{2k-2}-\xi\right)}
	\nonumber \\[0.5em]
	&\quad\times \Gamma\!\left(\xi-\tfrac{s}{2k-2}+\tfrac{\nu}{2}\right)
	\Gamma\!\left(\xi-\tfrac{s}{2k-2}+\tfrac{1+\nu}{2}\right)
	\left(\left(\tfrac{uy}{2k-2}\right)^{2k-2}\right)^{-\xi} d\xi
\end{align}
%	\begin{align}\label{intef}
%	I&=\frac{2^{\nu-2}u^{s-(k-1)\nu}}{2\pi i\sqrt{\pi}\Gamma(\nu)\sqrt{k-1}}
%	\int_{(\frac{1}{4k-4})}
%	\frac{
%		\Gamma\!\left(\frac{s}{2k-2}-\xi\right)
%		\Gamma\!\left(\frac12+\frac{s}{2k-2}-\xi\right)
%		\prod_{j=0}^{k-2}\Gamma\!\left(\xi+\frac{j}{k-1}\right)
%	}{}
%	\nonumber \\[0.5em]
%	&\quad\times
%	\frac{
%		\Gamma\!\left(\xi-\frac{s}{2k-2}+\frac{\nu}{2}\right)
%		\Gamma\!\left(\xi-\frac{s}{2k-2}+\frac{1+\nu}{2}\right)
%	}{
%		\prod_{j=1}^{k-1}\Gamma\!\left(\frac{2j-1}{2k-2}-\xi\right)
%	}
%	\left(\left(\frac{uy}{2k-2}\right)^{2k-2}\right)^{-\xi} d\xi
%\end{align}
for $\frac12< \textup{Re}(s)<\frac12+(k-1)\textup{Re}(\nu).$  

Now the integral $I$ in \eqref{IBP1} converges uniformly and is analytic in the region $0< \textup{Re}(s)<1+(k-1)\textup{Re}(\nu)$. Also, the right-hand side of  \eqref{intef} converges uniformly for $\textup{Re}(s)>0$ and is analytic there. Therefore, by the principle of analytic continuation we see that \eqref{intef} holds for $0< \textup{Re}(s)<1+(k-1)\textup{Re}(\nu)$. Finally the proof of Lemma \ref{lemma:7.1} is complete upon noticing that the right-hand side of \eqref{intef} is nothing but the right-hand side of \eqref{lemma:7.1 eqn} using the definition of Meijer $G$-function given in \eqref{MeijerG}.
\end{proof}
\begin{remark}
The two special cases of the above lemma for $k=2$ and $k=3$ are contained in the literature, namely, in \cite[p.~392, Formula \textbf{2.5.7.8}]{Prudnikov} and \cite[p.~43, Formula (5.10)]{ober} respectively. For example, when $k=3$, using Slater's theorem \cite[p.~145]{Luke}, we obtain \cite[p.~43, Formula (5.10)]{ober}, namely, for $0<\textup{Re}(s)<1+2\textup{Re}(\nu),$
\begin{align*}
	\int_{0}^{\infty} \frac{t^{s-1} \cos (yt) \ dt}{(u^{2}+t^{2})^{\nu}}&=\frac{u^{s-2\nu}}{2}\frac{\Gamma\left(\frac{s}{2}\right)\Gamma\left(\nu-\frac{s}{2}\right)}{\Gamma(\nu)}{}_1F_{2}\left( \begin{matrix}
		\frac{s}{2} \\
		1-\nu+\frac{s}{2},\frac{1}{2}
	\end{matrix}~;\frac{u^2y^2}{4}\right)\nonumber\\
	&\quad+\frac{\sqrt{\pi}}{2}\left(\frac{y}{2}\right)^{2\nu-s}\frac{\Gamma\left(\frac{s}{2}-\nu \right)}{\Gamma\left(\frac{1}{2}+\nu-\frac{s}{2}\right)}{}_1F_{2}\left( \begin{matrix}
		\nu \\
		\frac{1}{2}+\nu-\frac{s}{2},1+\nu-\frac{s}{2}
	\end{matrix}~;\frac{u^2y^2}{4}\right).
\end{align*}
However, in general, for any natural number $k>1$, the evaluation of the integral in the above lemma was missing in the literature.
\end{remark}

\begin{lemma}\label{MImp thm}
	Let $\textup{Re}(\nu)>0$. 
	If $k=2$ and $-1<\textup{Re}(s)<0$, or $k>2$ and $-2<\textup{Re}(s)<0$, we have
	\begin{align}\label{MImp}
		&\int_{0}^{\infty}\left(\frac{\cos (yt)}{(u^{k-1}+t^{k-1})^{\nu}}-\frac{1}{u^{(k-1)\nu}}\right)t^{s-1}\, dt\nonumber\\
		&=\frac{2^{\nu-2} u^{\,s-(k-1)\nu}}{\sqrt{\pi(k-1)}\,\Gamma(\nu)}G^{k+1,2}_{2,2k}\left( \begin{matrix}
			\left\{\frac{2k-2-s}{2k-2}, \frac{k-1-s}{2k-2}\right\},\left\{\right\}\\
			\left\{0,\frac{1}{k-1},\cdots,\frac{k-2}{k-1}, \frac{\nu}{2}-\frac{s}{2k-2}, \frac{1+\nu}{2}-\frac{s}{2k-2}\right\}, \left\{\frac{1}{2k-2},\frac{3}{2k-2},\cdots,\frac{2k-3}{2k-2}\right\}
		\end{matrix} \bigg | \left(\tfrac{uy}{2k-2}\right)^{2k-2} \right).
	\end{align}
\end{lemma}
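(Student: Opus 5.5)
The plan is to obtain \eqref{MImp} by analytic continuation in $s$ of Lemma \ref{lemma:7.1}: subtracting $u^{-(k-1)\nu}$ is the standard way to continue a Mellin transform past the abscissa $\textup{Re}(s)=0$. Set
\begin{equation*}
f(t):=\frac{\cos(yt)}{(u^{k-1}+t^{k-1})^{\nu}},\qquad h(t):=f(t)-f(0),\qquad f(0)=u^{-(k-1)\nu}.
\end{equation*}

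First I check that the left-hand side of \eqref{MImp} converges and is analytic in the stated strip. Near $t=0$ we have $\cos(yt)-1=O(t^2)$ and $(u^{k-1}+t^{k-1})^{-\nu}-u^{-(k-1)\nu}=O(t^{k-1})$. Hence $h(t)=O(t^{\min\{2,k-1\}})$, so $\int_0^1 h(t)t^{s-1}\,dt$ is analytic for $\textup{Re}(s)>-\min\{2,k-1\}$. This gives $-1$ when $k=2$ and $-2$ when $k>2$, which explains the two cases in the statement.

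On $[1,\infty)$ I split $h$ into $f$ and the constant $-f(0)$.
\begin{itemize}
\item The constant piece $-f(0)\int_1^\infty t^{s-1}\,dt=f(0)/s$ converges for $\textup{Re}(s)<0$.
\item The piece $\int_1^\infty f(t)t^{s-1}\,dt$ converges, and is analytic in $s$, for $\textup{Re}(s)<1+(k-1)\textup{Re}(\nu)$. This follows from the integration-by-parts argument of Lemma \ref{lemma:7.1}, which only uses the behaviour of $f$ at infinity.
\end{itemize}

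Next I connect this with Lemma \ref{lemma:7.1}. For $0<\textup{Re}(s)<1+(k-1)\textup{Re}(\nu)$ I write
\begin{equation*}
\int_0^\infty f(t)t^{s-1}\,dt=\int_0^1 h(t)t^{s-1}\,dt+\frac{f(0)}{s}+\int_1^\infty f(t)t^{s-1}\,dt .
\end{equation*}
The right-hand side is meromorphic in $-\min\{2,k-1\}<\textup{Re}(s)<1+(k-1)\textup{Re}(\nu)$, with only a simple pole at $s=0$. For $\textup{Re}(s)<0$ we have $f(0)/s=-\int_1^\infty f(0)t^{s-1}\,dt$, so in that range the whole right-hand side equals $\int_0^\infty h(t)t^{s-1}\,dt$. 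In other words, the left-hand side of \eqref{MImp} is the meromorphic continuation of the left-hand side of \eqref{lemma:7.1 eqn}.

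It then remains to show that the Meijer $G$-expression on the right of \eqref{lemma:7.1 eqn} is also meromorphic in this larger region. Once that is known, the identity theorem on the connected set obtained by removing the isolated singularities gives \eqref{MImp} for $-2<\textup{Re}(s)<0$ (respectively $-1<\textup{Re}(s)<0$ when $k=2$).

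For the $G$-function I use the integral representation \eqref{intef}, read with a contour $L$ that separates two families of poles:
\begin{itemize}
\item the poles of $\Gamma\!\left(\frac{s}{2k-2}-\xi\right)\Gamma\!\left(\frac12+\frac{s}{2k-2}-\xi\right)$, located at $\xi=\frac{s}{2k-2}+m$ and $\xi=\frac12+\frac{s}{2k-2}+m$;
\item the poles of $\prod_{j=0}^{k-2}\Gamma\!\left(\xi+\frac{j}{k-1}\right)$ and of the two $\nu$-dependent Gamma factors, located to the left.
\end{itemize}
This is exactly the definition \eqref{MeijerG}. It converges absolutely and locally uniformly in $s$ because $p+q=2k+2<2(m+n)=2k+6$, by Stirling's formula \eqref{strivert}. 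It is therefore analytic in $s$ as long as no pole of the first family collides with a pole of the second. The first collision as $\textup{Re}(s)$ decreases is $\xi=\frac{s}{2k-2}$ meeting $\xi=0$ at $s=0$, and this produces precisely the pole $f(0)/s$. The next collisions occur at $s=-2$, from $\frac{s}{2k-2}=-\frac{1}{k-1}$, and, for $k=2$ only, at $s=-1$, from $\frac12+\frac{s}{2}=0$. For $k>2$ the latter type of collision happens only at $s=-(k-1)\le -2$.

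The main obstacle is making this last continuation rigorous. As $s$ moves left, the line $\textup{Re}(\xi)=\frac{1}{4k-4}$ used in \eqref{intef} must be deformed to keep separating the two families. Doing so amounts to picking up residues, and one has to check that the deformed integral still depends analytically on $s$. I would handle this by fixing an indented contour locally in $s$ and arguing with uniform convergence, as was done at the end of the proof of Lemma \ref{lemma:7.1}.
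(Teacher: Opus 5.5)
Your argument is correct, but it is organized differently from the paper's.

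\textbf{The paper's route.} It writes $\cos(yt)(u^{k-1}+t^{k-1})^{-\nu}$ as the inverse Mellin integral of the right-hand side of \eqref{lemma:7.1 eqn} along $\textup{Re}(s)=c>0$, and expands the $G$-function by Slater's theorem. It then shifts the line to $\textup{Re}(s)=\lambda\in(-2,0)$ (resp.\ $(-1,0)$ for $k=2$), picks up the residue $u^{-(k-1)\nu}$ at $s=0$, and applies Mellin inversion once more.

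\textbf{Your route.} You continue the Mellin transform itself. Splitting at $t=1$ identifies the left-hand side of \eqref{MImp} with the continuation of the left-hand side of \eqref{lemma:7.1 eqn}. The identity theorem on the strip punctured at $s=0$ then finishes, provided the $G$-side is analytic there. Your collision analysis is right: collisions occur only at real $s\le 0$, first at $s=0$ and next at $s=-2$ (or $s=-1$ when $k=2$), and $\textup{Re}(\nu)>0$ excludes collisions with the $\nu$-dependent Gamma factors.

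\textbf{What each buys.} The paper's route displays $u^{-(k-1)\nu}$ directly as a residue. However, it tacitly requires decay of the $G$-function as $|\textup{Im}(s)|\to\infty$ for the contour shift and for the two inversions, and this is not verified. Your route needs no estimates in $\textup{Im}(s)$ at all. It also explains the two lower limits $-1$ and $-2$ through the order of vanishing of $h$ at $t=0$.

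\textbf{Your ``main obstacle''.} This step is routine. The $G$-integral converges absolutely and locally uniformly, since $p+q=2k+2<2(m+n)=2k+6$ and the argument is positive. A separating contour chosen near a non-collision point keeps separating the poles for all nearby $s$. Any two separating contours give the same value, so the $G$-side is single-valued and analytic off the collision set.

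This same observation is also what the proof of Lemma \ref{lemma:7.1} needs to pass from $\frac12<\textup{Re}(s)$ to $0<\textup{Re}(s)$. The fixed line in \eqref{intef} stops separating the poles once $\textup{Re}(s)\le\frac12$.
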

\begin{proof}
	For brevity, let $G(u,y,s,k, \nu)$ denote the Meijer $G$-function in \eqref{lemma:7.1 eqn}. 
	Applying the Mellin inversion theorem to the integral evaluation in Lemma \ref{lemma:7.1}, we find that 
\begin{align*}
	A_{y}(u,t):=\frac{\cos (yt)}{(u^{k-1}+t^{k-1})^{\nu}}=\frac{2^{\nu-2}u^{-(k-1)\nu}}{\sqrt{\pi(k-1)}\Gamma(\nu)}\frac{1}{2\pi i}\int_{(c)}  G(u,y,s,k, \nu)\left( \frac{t}{u}\right)^{-s}\, ds,
\end{align*}
where $0<c=\textup{Re}(s)<1+(k-1)\textup{Re}(\nu).$
Further invoking Slater's formula \cite[p.~145, Equation (7)]{Luke} for the Meijer $G$-function, we get 
\begin{align*}
	A_{y}(u,t)=\frac{2^{\nu-2}u^{-(k-1)\nu}}{\sqrt{\pi(k-1)}\Gamma(\nu)}\frac{1}{2\pi i}&\int_{(c)}\sum_{h=1}^{k+1}\frac{\prod_{j=1}^{k=1}\Gamma(b_j-b_h)^{*}\Gamma(b_h+\frac{s}{2k-2})\Gamma(\frac{1}{2}+b_h+\frac{s}{2k-2})}{\prod_{j=k+2}^{2k}\Gamma(1+b_h-b_j)}\left( \frac{uy}{2k-2}\right)^{(2k-2)b_{h}}\nonumber\\
	&\times{}_2F_{2k-1}\left(\begin{matrix}
		b_h+\frac{s}{2k-2},\frac{1}{2}+b_h+\frac{s}{2k-2}  \\
		1+b_h-b^{*}_{2k}
	\end{matrix}\, \Bigg| \left(\frac{-uy}{2k-2}\right)^{(2k-2)} \right)\left( \frac{t}{u}\right)^{-s}\, ds.
\end{align*}	
We now shift the line of integration from $\textup{Re}(s)=c$ to $\textup{Re}(s)=\l$, where $-1<\l<0$ if $k=2$ and $-2<\l<0$ if $k>2$. In this process, we encounter a simple pole of the integrand at $s=0$ (due to the term corresponding to $h=1$), the residue at which can be easily calculated to be $u^{-(k-1)\nu}$. Then, by Cauchy's residue theorem,
	\begin{align*}
	A_{y}(u,t)-\frac{1}{u^{(k-1)\nu}}=\frac{2^{\nu-2}u^{-(k-1)\nu}}{\sqrt{\pi(k-1)\Gamma(\nu)}}\frac{1}{2\pi i}\int_{(\l)}G(u,y,s, k, \nu)\left( \frac{t}{u}\right)^{-s}\, ds.
\end{align*}
Finally, applying the Mellin inversion theorem again, we arrive at \eqref{MImp}.
\end{proof}
Armed with Lemmas \ref{lemma:7.1} and \ref{MImp thm}, we are now ready to prove Theorem \ref{integral conjecture}.
\begin{proof}[Theorem \textup{\ref{integral conjecture}}][]
Replace $s$ by $1-s$ and then let $\nu=s$ in Lemma \ref{MImp thm} so that for $1<\textup{Re}(s)<3$,
	\begin{align}\label{Impeq1}
	\int_{0}^{\infty} \left(\cos (yt)(1+t^{k-1})^{-s}-1\right)t^{-s}\, dt
	=\frac{2^{s-2}}{\sqrt{\pi(k-1)}\Gamma(s)}\mathcal{G}_k\left(s,y\right),
\end{align}
where $\mathcal{G}_k(s,y)$ is defined in \eqref{short notation}. Using \eqref{MT2}, it is easy to see that for $1<\textup{Re}(s)<3$,
\begin{align}\label{Impeq2}
	\int_{0}^{\infty} t^{-s} (\cos(yt)-1) dt = y^{s-1} \Gamma(1-s) \sin\left(\frac{\pi s}{2}\right).   
\end{align}
Equation \eqref{integral conjecture eqn} now follows  for $1<\textup{Re}(s)<3$ from \eqref{Impeq1} and \eqref{Impeq2} since
\begin{align*}
	\int_{0}^{\infty} t^{-s}  ( (1+t^{k-1})^{-s} -1) \cos(yt)dt
	=\int_{0}^{\infty} \left(\cos (yt)(1+t^{k-1})^{-s}-1\right)t^{-s}\, dt- \int_{0}^{\infty} t^{-s} (\cos(yt)-1) dt,
\end{align*}
and because
\begin{align*}
\int_{0}^{\infty}\left((1+x)^{-s}-1\right)x^{\frac{s+k-2}{1-k}}\cos\left(yx^{\frac{1}{k-1}}\right)\, dx=(k-1)	\int_{0}^{\infty} t^{-s}  ( (1+t^{k-1})^{-s} -1) \cos(yt)dt.
\end{align*}
\end{proof}

\subsection{Asymptotic expansion of a Meijer $G$-function}\label{asymp-meijer}

In order to prove Theorem \ref{analytically extended}, we first need to derive the asymptotic expansion of the Meijer $G$-function occurring in Theorem \ref{integral conjecture}. This is done in this subsection.

Let $m, n, p$ and $q$ be integers satisfying $1 \le n \le p < q$ and $1 \le m \le q$. Let 
\begin{align*}
	a_j - b_h &\neq 1,2,3,\ldots,
	\quad j=1,2,\ldots,n,\quad h=1,2,\ldots,m,\nonumber\\
	a_j - a_t &\neq 0,\pm1,\pm2,\ldots,
	\quad j,t=1,2,\ldots,n,\quad j \neq t,
\end{align*}
Then, as $|z| \to \infty$, 	where $|\arg(z)| \le \rho\pi - \delta$ with
$\rho = q-p > 0$ and $\delta \ge 0$, we have \cite[p.~179, Theorem~2]{Luke}
\begin{align*}
G^{m, n}_{p, q}\left( \begin{matrix}
	\left\{a_1,\cdots,a_n\right\},\left\{a_{n+1}\cdots,a_p\right\}\\
	\left\{b_1,\cdots,b_m\right\}, \left\{b_{m+1},\cdots,b_q\right\}
\end{matrix} \bigg | z\right)
	\sim
	\sum_{j=1}^{n}
	\exp\!\bigl(-i\pi(\nu+1)a_j\bigr)\,
	\Delta^{m,n}_{q}(j)\,
	E_{p,q}\!\left(
	z\, e^{i\pi(\nu+1)} || a_j
	\right),
\end{align*}
where $\nu := q - m - n$, and
\begin{align}
	\Delta^{m,n}_{q}(j)
	&:=
	(-1)^{\nu+1}
	\frac{
		\prod_{\substack{\ell=1 \\ \ell \neq j}}^{n}
		\Gamma(a_j-a_\ell)\,\Gamma(1+a_\ell-a_j)
	}{
		\prod_{\ell=m+1}^{q}
		\Gamma(a_j-b_\ell)\,\Gamma(1+b_\ell-a_j)
	}, \nonumber\\
	E_{p,q}(w \mid a_j)
	&:=
	w^{a_j-1}
	\frac{\prod_{\ell=1}^{q}\Gamma(1+b_\ell-a_j)}
	{\prod_{\ell=1}^{p}\Gamma(1+a_\ell-a_j)}
	\nonumber \\
	&\quad \times
	{}_{q}F_{p-1}\!\left(
	\begin{matrix}
		1 + b_1 - a_j,\ \ldots,\ 1 + b_q - a_j \\[4pt]
		1 + a_1 - a_j,\ \ldots,\ 1 + a_{j-1} - a_j,\ 1 + a_{j+1} - a_j,\ \ldots,\ 1 + a_p - a_j
	\end{matrix}\;\middle|\; -\dfrac{1}{w} \right), \label{epq}
\end{align}
where  ${}_pF_q$ is defined in \eqref{ghyp}.

Recall that $a_1 = \frac{2k-3+s}{2k-2}, 
	a_2 = \frac{k-2+s}{2k-2}$,
and that
$	b_i = \frac{i-1}{k-1}$  for $1 \le i \le k-1, $
	$b_k = \frac{sk-1}{2k-2},	b_{k+1} = \frac{sk-1}{2k-2} + \frac{1}{2}$,
and 
	$b_{k+1+t} = \frac{2k-(2t+1)}{2k-2}$ for $1 \le t \le k-1$. We first simplify $\Delta^{k+1,2}_{2k}(1)$ and $\Delta^{k+1,2}_{2k}(2)$. By the definition,
 \begin{align*}
 	\Delta^{k+1,2}_{2k}(1) & =(-1)^{k-2}  \frac{
 		\Gamma(a_1-a_2)\,\Gamma(1+a_2-a_1)
 	}{
 		\prod_{\ell=k+2}^{2k}
 		\Gamma(a_1-b_\ell)\,\Gamma(1+b_\ell-a_1)} \nonumber \\
 	%&=\frac{ (-1)^{k} \pi}{ \prod_{\ell=k+2}^{2k}
 	%	\Gamma(a_1-b_\ell)\,\Gamma(1+b_\ell-a_1)}\nonumber\\
 	&=\frac{ (-1)^{k} \pi}{ \prod_{t=1}^{k-1}
 		\Gamma\left(\frac{s-2+2t}{2k-2} \right)\,\prod_{t=1}^{k-1}
 		\Gamma\left(\frac{2t-s}{2k-2} \right)} \nonumber\\
 	&=\frac{ (-1)^{k} \pi}{ \prod_{t=1}^{k-1}
 		\Gamma\left(\frac{s}{2k-2} +\frac{t-1}{k-1}\right)\,\prod_{t=1}^{k-1}
 		\Gamma\left(\frac{2-s}{2k-2}+\frac{t-1}{k-1} \right)},
 \end{align*}
 where, in the last step, we replaced $t$ by $k-t$ in the second product in the denominator.
Applying \eqref{gmf} 
and simplifying, we arrive at
  	\begin{align*}
  	\Delta^{k+1,2}_{2k}(1) 
  	&= \frac{ (-1)^{k} \pi}{ (2\pi)^{k-2}   \Gamma\left(\frac{s}{2}\right) \Gamma \left(1-\frac{s}{2}\right)}= (-1)^{k} (2\pi)^{2-k}  \sin \left(\frac{\pi s}{2}\right).
  \end{align*}
Similarly,
	\begin{align*}
	\Delta^{k+1,2}_{2k}(2)  
	= \frac{ (-1)^{k+1} \pi}{ (2\pi)^{k-2}   \Gamma\left(\frac12+\frac{s-k}{2}\right) \Gamma \left(\frac12-\frac{s-k}{2}\right)}
	=  (-1)^{k+1} (2\pi)^{2-k} \cos \left(\frac{\pi (s-k)}{2}\right).
\end{align*}
We proceed to obtain simplified expressions for $E_{2,2k}\!\big(z e^{i\pi(k-2)} \mid a_1\big)$ and $E_{2,2k}\!\big(z e^{i\pi(k-2)} \mid a_2\big)$. To that end, note that using \eqref{gmf} repeatedly, for any $\mu\in\mathbb{N}\cup\{0\}$, we have 
\begin{align}
	\prod_{\ell=1}^{2k}\Gamma(1+b_\ell-a_1+\mu)&=2^{\,k-1-2\mu k}\,\pi^{k-1}(k-1)^{s -\frac{1}{2} -2\mu(k-1)}\,
	\Gamma(s+2\mu)\,
	\Gamma\!\big(1 - s + 2\mu(k-1)\big)\,
	,\label{a1mu}\\
	\prod_{\ell=1}^{2k}\Gamma(1+b_\ell-a_2+\mu)&=2^{\,-1-2\mu k}\,\pi^{k-1}\,
	(k-1)^{s-k +\frac{1}{2} -2\mu(k-1)}\Gamma(s+2\mu+1)\,
	\Gamma\!\big(k - s + 2\mu(k-1)\big)\,
	. \label{a2mu}
	\end{align}
From \eqref{epq}, \eqref{ghyp}, \eqref{a1mu} and \eqref{a2mu}, we find, after a lot of simplification, that
\small\begin{align*}
	E_{2,2k}\!\big(z e^{i\pi(k-2)} \mid a_1\big)
	& =\Big(z e^{i\pi(k-2)}\Big)^{\frac{s-1}{2k-2}}   \pi^{k-\frac32} 2^{k-1} (k-1)^{s-\frac12} \nonumber\\
	& \quad\times\left(
	\frac{\pi}{\sin \pi s}+\sum_{\mu=1}^{M}
	\,
	\frac{\Gamma(s+2\mu)\,
		\Gamma\!\big(1 - s + 2\mu(k-1)\big)}{2^{2\mu k}\,
		(k-1)^{2\mu(k-1)} (\frac12)_{\mu}\mu!(-z e^{i\pi(k-2)})^{\mu}}
	+O_k\left( |z|^{-(M+1)} \right)
	\right),\\
	E_{2,2k}\!\big(z e^{i\pi(k-2)} \mid a_2\big)
	& =\Big(z e^{i\pi(k-2)}\Big)^{\frac{s-k}{2k-2}}   \pi^{k-\frac32}  (k-1)^{s-k+\frac12}  \nonumber\\
	& \quad\times\left(
	\Gamma(s+1) \Gamma(k-s)+\sum_{\mu=1}^{M}
	\,
	\frac{\Gamma(s+2\mu+1)\,
		\Gamma\!\big(k - s + 2\mu(k-1)\big)}{2^{2\mu k}\,(k-1)^{2\mu(k-1)} (\frac32)_{\mu}\mu!(-z e^{i\pi(k-2)})^{\mu}}
	+O_k\left( |z|^{-(M+1)} \right)
	\right).
\end{align*}
\normalsize
%It is not difficult to see that with $z=\left(y/(2k-2)\right)^{2k-2}$, the first term   with $y=2\pi n^{1/(k-1)}$, we have
Therefore, 
\begin{align*}
&G_{2, \, \,\,\,\,\,2k}^{k+1, \, \, 2}\left( \begin{matrix}
		\left\{\frac{2k-3+s}{2k-2}, \frac{k-2+s}{2k-2}\right\},\left\{\right\}\\
		\left\{0,\frac{1}{k-1},\cdots,\frac{k-2}{k-1}, \frac{sk-1}{2k-2}, \frac{sk-1}{2k-2}+\frac{1}{2}\right\}, \left\{\frac{1}{2k-2},\frac{3}{2k-2},\cdots,\frac{2k-3}{2k-2}\right\}
	\end{matrix} \bigg | z\right)\nonumber\\
	&=\frac{\pi^{\frac{3}{2}}(k-1)^{s-\frac12}z^{\frac{s-1}{2k-2}}}{\cos\left(\frac{\pi s}{2}\right)} \left(1+
	\sum_{\mu=1}^{M}
	\,
	\frac{(s)_{2\mu}(1-s)_{2\mu(k-1)}}{2^{2\mu k}\,
		(k-1)^{2\mu(k-1)} (\frac12)_{\mu}\mu!(-z e^{i\pi(k-2)})^{\mu}}+O_k\left( |z|^{-(M+1)}\right) \right)\nonumber\\
	& \quad-\frac{\pi^{\frac32}(k-1)^{s-k+\frac12}z^{\frac{s-k}{2k-2}}\Gamma(s+1)}{  2^{k-1}\Gamma(1-k+s)\sin \left(\tfrac{(k-s)\pi}{2}\right)}  \left(
	\sum_{\mu=0}^{M}
	\,
	\frac{(s+1)_{2\mu}\,
		(k - s)_{2\mu(k-1)}}{2^{2\mu k}\,(k-1)^{2\mu(k-1)} (\frac32)_{\mu}\mu!(-z e^{i\pi(k-2)})^{\mu}}
	+O_k\left( |z|^{-(M+1)} \right)
	\right).
\end{align*}
Hence, with $z=\left(\frac{y}{2k-2}\right)^{2k-2}$, where $y=2\pi n^{\frac{1}{k-1}}$, so that $z=\left(\frac{\pi}{k-1}\right)^{2k-2}n^2$, we have
\begin{align}\label{meijer G asymptotic}
	&G_{2, \, \,\,\,\,\,2k}^{k+1, \, \, 2}\left( \begin{matrix}
		\left\{\frac{2k-3+s}{2k-2}, \frac{k-2+s}{2k-2}\right\},\left\{\right\}\\
		\left\{0,\frac{1}{k-1},\cdots,\frac{k-2}{k-1}, \frac{sk-1}{2k-2}, \frac{sk-1}{2k-2}+\frac{1}{2}\right\}, \left\{\frac{1}{2k-2},\frac{3}{2k-2},\cdots,\frac{2k-3}{2k-2}\right\}
	\end{matrix} \bigg | z\right)-\frac{\pi^{\frac{3}{2}}(k-1)^{s-\frac12}z^{\frac{s-1}{2k-2}}}{\cos\left(\frac{\pi s}{2}\right)}\nonumber\\
&=	\sum_{\mu=1}^{M}A_{s,k}(\mu)n^{\frac{s-1}{k-1}-2\mu}+O_k\left(n^{\frac{\sigma-1}{k-1}-2(M+1)}\right)+\sum_{\mu=0}^{M}B_{s,k}(\mu)n^{\frac{s-k}{k-1}-2\mu}+O_k\left(n^{\frac{\sigma-k}{k-1}-2(M+1)}\right),
\end{align}
where
\begin{align*}
A_{s,k}(\mu)&:=\frac{\pi^{s+\frac{1}{2}-(2k-2)\mu}\sqrt{k-1}(s)_{2\mu}(1-s)_{2\mu(k-1)}}{2^{2\mu k}\,
	 (\frac12)_{\mu}\mu!(- e^{i\pi(k-2)})^{\mu}\cos\left(\frac{\pi s}{2}\right)},\nonumber\\
B_{s,k}(\mu)&:=	 \frac{\pi^{\frac{3}{2}+s-k-2\mu(k-1)}\sqrt{k-1}\Gamma(s+1+2\mu)\,
	(k - s)_{2\mu(k-1)}}{2^{2\mu k+k-1} (\frac32)_{\mu}\mu!(-e^{i\pi(k-2)})^{\mu}\Gamma(1-k+s)\sin \left(\tfrac{(s-k)\pi}{2}\right)}.
\end{align*}
We now have the necessary ingredients to prove the analytic extension of Theorem \ref{cs type formula}. 

\begin{proof}[Theorem \textup{\ref{analytically extended}}][]
	Invoking Theorems \ref{cs type formula} and \ref{integral conjecture}, we deduce that for $1<\textup{Re}(s)<3$,
\small\begin{align}\label{almost extended}
	\mathcal{L}_k(s)&=\zeta^2(s)+\frac{1}{(k-1)\Gamma(s)}\Gamma\left(\frac{1-s}{k-1}\right)\Gamma\left(\frac{ks-1}{k-1}\right)\zeta\left(\frac{ks-1}{k-1}\right)+\frac{2^{s-1}}{\sqrt{\pi(k-1)}\Gamma^2(s)}\nonumber\\
	&\times\sum_{n=1}^{\infty} \frac{\sigma^{(k-1)}_{ks-1}(n)}{n^{\frac{ks-1}{k-1}}}\bigg\{ \mathcal{G}_k\left(s,2\pi n^{\frac{1}{k-1}}\right)-\frac{\pi^{s+\frac{1}{2}}\sqrt{k-1}}{\cos\left(\frac{\pi s}{2}\right)}n^{\frac{s-1}{k-1}}-\sum_{\mu=1}^{M}A_{s,k}(\mu)n^{\frac{s-1}{k-1}-2\mu}-\sum_{\mu=0}^{M}B_{s,k}(\mu)n^{\frac{s-k}{k-1}-2\mu}\bigg\}\nonumber\\
	&+\sum_{n=1}^{\infty} \frac{\sigma^{(k-1)}_{ks-1}(n)}{n^{\frac{ks-1}{k-1}}}\left(\sum_{\mu=1}^{M}A_{s,k}(\mu)n^{\frac{s-1}{k-1}-2\mu}+\sum_{\mu=0}^{M}B_{s,k}(\mu)n^{\frac{s-k}{k-1}-2\mu}\right).
\end{align}
\normalsize
Now in the aforementioned vertical strip, we have, using \eqref{gen_sigma_z},
\begin{align}\label{converted to zeta}
	&\sum_{n=1}^{\infty} \frac{\sigma^{(k-1)}_{ks-1}(n)}{n^{\frac{ks-1}{k-1}}}\left(\sum_{\mu=1}^{M}A_{s,k}(\mu)n^{\frac{s-1}{k-1}-2\mu}+\sum_{\mu=0}^{M}B_{s,k}(\mu)n^{\frac{s-k}{k-1}-2\mu}\right)\nonumber\\
	&=\sum_{\mu=1}^{M}A_{s,k}(\mu)\zeta(s+2\mu)\zeta(1-s+2\mu(k-1))+\sum_{\mu=0}^{M}B_{s,k}(\mu)\zeta(s+1+2\mu)\zeta(k-s+2\mu(k-1)).
\end{align}
Combining \eqref{almost extended} and \eqref{converted to zeta}, we see that \eqref{analytically extended eqn} holds for $1<\textup{Re}(s)<3$. However, \eqref{meijer G asymptotic} and \eqref{gen_sigma_z} imply that that the series involving the Meijer $G$-function in \eqref{analytically extended eqn} converges absolutely and uniformly in $-2M-1<\textup{Re}(s)<(2M+2)(k-1)$ except\footnote{We need to exclude these values, for, the Meijer $G$-function ceases to exist at these values owing to the fact that the condition \cite[p.~143]{Luke} that $a_{\ell_1}-b_{\ell_2}\notin\mathbb{Z}^{+}$ is not satisfied for every $\ell_1=1,2,\cdots,n$ and $\ell_2=1,2,\cdots,m$.} for $s=1+j(k-1), j\in\mathbb{N}\cup \{0\}$ and $j<2M+2-\frac{1}{k-1}$, and is, therefore, analytic in this region. Also, the sum of the remaining expressions on the right-hand side of \eqref{almost extended} is analytic in the same region except for a double pole at $s=1$ and additional simple poles, due to $\Gamma\left(\frac{ks-1}{k-1}\right)$, at $s=1/k$ and  $\frac1k-\frac{(k-1)(2n+1)}{k}$ for $0\leq n <\frac{Mk+1}{k-1}$. 

Observe that the apparent simple poles at $s=1-2\mu$ (or $-2\mu$) arising from $\zeta(s+2\mu)$ (or $\zeta(s+1+2\mu)$) in the sums over $\mu$ involving zeta functions in \eqref{analytically extended eqn} get annihilated by the double poles of $\Gamma^{2}(s)$ at the same values. Also, the simple poles of $\zeta(1-s+2\mu(k-1))$ (or $\zeta(k-s+2\mu(k-1))$) get annihilated by the simple poles of $\Gamma(1-s+2\mu(k-1))$ (or $\Gamma(k-s+2\mu(k-1))$) occurring in $A_{s,k}(\mu)$ (or $B_{s,k}(\mu)$) respectively. Finally, we leave it to the reader to verify that the apparent poles of $\Gamma\left(\frac{1-s}{k-1}\right)$ at $s=1+j(k-1)$, $j\in\mathbb{N}\cup \{0\}$ and $j<2M+2-\frac{1}{k-1}$, are annihilated by the poles at the same values of the series over Meijer $G$-function and of the sums over $\mu$.

Thus, the right-hand side of \eqref{analytically extended eqn} gives another way of meromorphically continuing $\mathcal{L}_k(s)$ in the whole $s$-complex plane as $M$ can be made arbitrarily large, and with the poles, precisely, at the same values of $s$ as those given in Theorem \ref{ac}.

On the other hand, Theorem \ref{ac},  the aforementioned discussion and the principle of analytic continuation imply that the identity in \eqref{analytically extended eqn} holds in the wider region $-2M-1<\textup{Re}(s)<(2M+2)(k-1)$ for any $M\in\mathbb{N}\cup\{0\}$ than just in $1<\textup{Re}(s)<3$.
\end{proof}

We now give an application of Theorem \ref{cs type formula} which expresses $\mathcal{F}_3(\frac{3}{2})$ in terms of an infinite series involving Bessel and modified  Bessel functions of the second kind.
\begin{proof}[Corollary \textup{\ref{k=3 s=3/2}}][]

Let $k=3$ and $s=3/2$ in Theorem \ref{cs type formula} and employ Theorem \ref{integral conjecture} with $y=2\pi\sqrt{n}$. This gives
	\begin{align}\label{k=3 s=3/2 eqn1}
	%\sum_{\substack{m, n=1}}^{\infty}\frac{1}{(m^3+mn)^\frac{3}{2}}
	\sum_{n=1}^{\infty}\frac{d^{\left(\frac{1}{3}\right)}(n)}{n^{3/2}}&=\zeta\left(\frac{9}{2}\right)+\zeta^2\left(\frac{3}{2}\right)+\frac{1}{\sqrt{\pi}}\Gamma\left(\frac{-1}{4}\right)\Gamma\left(\frac{7}{4}\right)\zeta\left(\frac{7}{4}\right)\nonumber\\	
	&\quad+\sum_{n=1}^{\infty} \frac{\sigma^{(2)}_{7/2}(n)}{n^{\frac{7}{4}}}\left(\frac{4}{\pi^{3/2}}G_{2, 6}^{4, 2}\left( \begin{matrix}
		\left\{\frac{9}{8}, \frac{5}{8}\right\},\left\{\right\}\\
		\left\{0,\frac{1}{2}, \frac{7}{8}, \frac{11}{8}\right\}, \left\{\frac{1}{4},\frac{3}{4}\right\}
	\end{matrix} \bigg | \tfrac{\pi^4n^2}{16} \right)+8\sqrt{\pi} n^{1/4}\right).
\end{align}
From \cite[p.~668, formula \textbf{8.4.23.22}]{Prudnikov},
\begin{align*}
I_{\mu}\left(\frac{1}{\sqrt{x}}\right)K_{\nu}\left(\frac{1}{\sqrt{x}}\right)=\frac{1}{2\sqrt{\pi}}G^{2, 2}_{4, 2}\left( \begin{matrix}
	\left\{1-\frac{\mu+\nu}{2}, 1+\frac{\nu-\mu}{2}\right\},\left\{1+\frac{\mu-\nu}{2}, 1+\frac{\mu+\nu}{2}\right\}\\
	\left\{\frac{1}{2},1\right\}, \left\{\right\}
\end{matrix} \bigg | x\right).
\end{align*}
Using the above identity twice, once with $\mu=\nu=1/4$, and then, with $\mu=\nu=-3/4$, we find upon simplification that
\begin{align*}
&-4\sqrt{\pi}\left(\pi\sqrt{n}\left(I_{\frac{1}{4}}(\pi\sqrt{n})K_{\frac{1}{4}}(\pi\sqrt{n})+3I_{-\frac{3}{4}}(\pi\sqrt{n})K_{\frac{3}{4}}(\pi\sqrt{n})\right)-2\right)\nonumber\\
&=-2\pi\sqrt{n}\left\{G^{2, 2}_{4, 2}\left( \begin{matrix}
	\left\{\frac{3}{4}, 1\right\},\left\{1, \frac{5}{4}\right\}\\
	\left\{\frac{1}{2},1\right\}, \left\{\right\}
\end{matrix} \bigg | \frac{1}{n\pi^2}\right)+3G^{2, 2}_{4, 2}\left( \begin{matrix}
\left\{\frac{7}{4}, 1\right\},\left\{1, \frac{1}{4}\right\}\\
\left\{\frac{1}{2},1\right\}, \left\{\right\}
\end{matrix} \bigg | \frac{1}{n\pi^2}\right)\right\}+8\sqrt{\pi}.
\end{align*}
Thus, from \eqref{k=3 s=3/2 eqn}, \eqref{k=3 s=3/2 eqn1} and the above equation, we will be done if we show that
\begin{align}\label{meijer g to be proved}
	G^{2, 2}_{4, 2}\left( \begin{matrix}
	\left\{\frac{3}{4}, 1\right\},\left\{1, \frac{5}{4}\right\}\\
	\left\{\frac{1}{2},1\right\}, \left\{\right\}
\end{matrix} \bigg | \frac{1}{n\pi^2}\right)+3G^{2, 2}_{4, 2}\left( \begin{matrix}
	\left\{\frac{7}{4}, 1\right\},\left\{1, \frac{1}{4}\right\}\\
	\left\{\frac{1}{2},1\right\}, \left\{\right\}
\end{matrix} \bigg | \frac{1}{n\pi^2}\right)
=\frac{-2}{n^{3/4}\pi^{5/2}}G_{2, 6}^{4, 2}\left( \begin{matrix}
	\left\{\frac{9}{8}, \frac{5}{8}\right\},\left\{\right\}\\
	\left\{0,\frac{1}{2}, \frac{7}{8}, \frac{11}{8}\right\}, \left\{\frac{1}{4},\frac{3}{4}\right\}
\end{matrix} \bigg | \tfrac{\pi^4n^2}{16} \right).
\end{align}
We now prove this identity by employing various properties of the Meijer $G$-function.

Using the identity \cite[p.~416, formula \textbf{16.19.1}]{nist}
\begin{align*}
G^{m, n}_{p, q}\left( \begin{matrix}
	\left\{a_1,\cdots,a_n\right\},\left\{a_{n+1}\cdots,a_p\right\}\\
	\left\{b_1,\cdots,b_m\right\}, \left\{b_{m+1},\cdots,b_q\right\}
\end{matrix} \bigg | x\right)=G^{n, m}_{q, p}\left( \begin{matrix}
\left\{1-b_1,\cdots,1-b_m\right\},\left\{1-b_{m+1}\cdots,1-b_q\right\}\\
\left\{1-a_1,\cdots,1-a_n\right\}, \left\{1-a_{n+1},\cdots,1-a_p\right\}
\end{matrix} \bigg | \frac{1}{x}\right)
\end{align*}
for each of the two Meijer $G$-functions on the left-hand side of \eqref{meijer g to be proved}, and \cite[p.~416, formula \textbf{16.19.4}]{nist} 
\begin{align*}
	&G^{m, n}_{p, q}\left( \begin{matrix}
		\left\{a_1,\cdots,a_n\right\},\left\{a_{n+1}\cdots,a_p\right\}\\
		\left\{b_1,\cdots,b_m\right\}, \left\{b_{m+1},\cdots,b_q\right\}
	\end{matrix} \bigg | x\right)\nonumber\\
&=\frac{2^{p+1+b_1+\cdots+b_q-m-n-a_1-\cdots-a_p}}{\pi^{m+n-\frac{1}{2}(p+q)}}	G^{2m, 2n}_{2p, 2q}\left( \begin{matrix}
	\left\{\frac{a_1}{2},\frac{1+a_1}{2},\cdots,\frac{a_n}{2}, \frac{1+a_n}{2}\right\},\left\{\frac{a_{n+1}}{2},\frac{1+a_{n+1}}{2}\cdots,\frac{a_p}{2},\frac{1+a_p}{2}\right\}\\
	\left\{\frac{b_1}{2},\frac{1+b_1}{2},\cdots,\frac{b_m}{2},\frac{1+b_m}{2}\right\}, \left\{\frac{b_{m+1}}{2}, \frac{1+b_{m+1}}{2}\cdots,\frac{b_q}{2}.\frac{1+b_q}{2}\right\}
\end{matrix} \bigg | \frac{x^2}{2^{2q-2p}}\right),
\end{align*}
for the one on the right-hand side, we see that this is equivalent to proving
\begin{align*}
	G^{2, 2}_{2, 4}\left( \begin{matrix}
		\left\{\frac{1}{2}, 0\right\},\left\{\right\}\\
		\left\{\frac{1}{4}, 0\right\}, \left\{0, -\frac{1}{4}\right\}
	\end{matrix} \bigg | n\pi^2\right)+3G^{2, 2}_{2, 4}\left( \begin{matrix}
		\left\{\frac{1}{2}, 0\right\},\left\{\right\}\\
		 \left\{-\frac{3}{4},0\right\}, \left\{\frac{3}{4},0\right\},
	\end{matrix} \bigg | n\pi^2\right)
	=\frac{-2}{n^{3/4}\pi^{3/2}}G_{1, 3}^{2, 1}\left( \begin{matrix}
		\left\{\frac{5}{4}\right\},\left\{\right\}\\
		\left\{0,\frac{7}{4}\right\}, \left\{\frac{1}{2}\right\}
	\end{matrix} \bigg | n\pi^2 \right),
\end{align*}
which, in turn, using \cite[p.~416, formula \textbf{16.19.3}]{nist} 
\begin{align*}
	G^{m, n}_{p, q}\left( \begin{matrix}
		\left\{a_0,a_1,\cdots,a_n\right\},\left\{a_{n+1}\cdots,a_p\right\}\\
		\left\{b_1,\cdots,b_m\right\}, \left\{b_{m+1},\cdots,b_q,a_0\right\}
	\end{matrix} \bigg | x\right)=G^{m, n-1}_{p-1, q-1}\left( \begin{matrix}
	\left\{a_1,\cdots,a_n\right\},\left\{a_{n+1}\cdots,a_p\right\}\\
	\left\{b_1,\cdots,b_m\right\}, \left\{b_{m+1},\cdots,b_q\right\}
\end{matrix} \bigg | x\right)
\end{align*}
for the functions on the left-hand side of \eqref{meijer g to be proved1} and \cite[p.~416, formula \textbf{16.19.2}]{nist} 
\begin{align*}
	G^{m, n}_{p, q}\left( \begin{matrix}
		\left\{a_0,a_1,\cdots,a_n\right\},\left\{a_{n+1}\cdots,a_p\right\}\\
		\left\{b_1,\cdots,b_m\right\}, \left\{b_{m+1},\cdots,b_q,a_0\right\}
	\end{matrix} \bigg | x\right)=x^{-\mu}G^{m, n}_{p, q}\left( \begin{matrix}
	\left\{,a_1+\mu,\cdots,a_n+\mu\right\},\left\{a_{n+1}+\mu\cdots,a_p+\mu\right\}\\
	\left\{b_1+\mu,\cdots,b_m+\mu\right\}, \left\{b_{m+1}+\mu,\cdots,b_q+\mu\right\}
\end{matrix} \bigg | x\right),
\end{align*}
with $\mu=3/4$ for the one on the right, is equivalent to proving
\begin{align}\label{meijer g to be proved1}
	G^{2, 1}_{1,3}\left( \begin{matrix}
		\left\{\frac{1}{2}\right\},\left\{\right\}\\
		\left\{\frac{1}{4}, 0\right\}, \left\{-\frac{1}{4}\right\}
	\end{matrix} \bigg | n\pi^2\right)+3G^{2, 1}_{1, 3}\left( \begin{matrix}
		\left\{\frac{1}{2}\right\},\left\{\right\}\\
		\left\{-\frac{3}{4},0\right\}, \left\{\frac{3}{4}\right\},
	\end{matrix} \bigg | n\pi^2\right)
	=-2G^{2, 1}_{1,3}\left( \begin{matrix}
		\left\{\frac{1}{2}\right\},\left\{\right\}\\
		\left\{\frac{-3}{4}, 1\right\}, \left\{-\frac{1}{4}\right\}
	\end{matrix} \bigg | n\pi^2\right).
	\end{align}
Now subtracting both sides of Equation (1.3.16) of \cite[p.~9]{mathai saxena} from the corresponding sides of Equation (1.3.15) and simplifying, we get for $1\leq m<q$,
\begin{align}\label{b1 minus bq}
&G^{m, n}_{p, q}\left( \begin{matrix}
	\left\{a_1,\cdots,a_n\right\},\left\{a_{n+1}\cdots,a_p\right\}\\
	\left\{b_1+1,\cdots,b_m\right\}, \left\{b_{m+1},\cdots,b_q\right\}
\end{matrix} \bigg | x\right)+G^{m, n}_{p, q}\left( \begin{matrix}
	\left\{a_1,\cdots,a_n\right\},\left\{a_{n+1}\cdots,a_p\right\}\\
	\left\{b_1,\cdots,b_m\right\}, \left\{b_{m+1},\cdots,b_q+1\right\}
\end{matrix} \bigg | x\right)\nonumber\\
&=(b_1-b_q)G^{m, n}_{p, q}\left( \begin{matrix}
	\left\{a_1,\cdots,a_n\right\},\left\{a_{n+1}\cdots,a_p\right\}\\
	\left\{b_1,\cdots,b_m\right\}, \left\{b_{m+1},\cdots,b_q\right\}
\end{matrix} \bigg | x\right).
\end{align}
Using \eqref{b1 minus bq} with $m=2, n=p=1, q=3$, $a_1=1/2, b_1=-3/4, b_q=-1/4$, and $x=n\pi^2$, we obtain
\begin{align}\label{meijer g to be proved2}
	G^{2, 1}_{1,3}\left( \begin{matrix}
		\left\{\frac{1}{2}\right\},\left\{\right\}\\
		\left\{\frac{1}{4}, 0\right\}, \left\{-\frac{1}{4}\right\}
	\end{matrix} \bigg | n\pi^2\right)+G^{2, 1}_{1, 3}\left( \begin{matrix}
		\left\{\frac{1}{2}\right\},\left\{\right\}\\
		\left\{-\frac{3}{4},0\right\}, \left\{\frac{3}{4}\right\},
	\end{matrix} \bigg | n\pi^2\right)
	=-\frac{1}{2}G^{2, 1}_{1,3}\left( \begin{matrix}
		\left\{\frac{1}{2}\right\},\left\{\right\}\\
		\left\{\frac{-3}{4}, 0\right\}, \left\{-\frac{1}{4}\right\}
	\end{matrix} \bigg | n\pi^2\right).
\end{align}
Equations \eqref{meijer g to be proved1} and \eqref{meijer g to be proved2} imply that we will be done provided it is shown that
\begin{align*}
G^{2, 1}_{1,3}\left( \begin{matrix}
\left\{\frac{1}{2}\right\},\left\{\right\}\\
\left\{\frac{-3}{4}, 1\right\}, \left\{-\frac{1}{4}\right\}
\end{matrix} \bigg | n\pi^2\right)+G^{2, 1}_{1, 3}\left( \begin{matrix}
\left\{\frac{1}{2}\right\},\left\{\right\}\\
\left\{-\frac{3}{4},0\right\}, \left\{\frac{3}{4}\right\}
\end{matrix} \bigg | n\pi^2\right)=\frac{1}{4}G^{2, 1}_{1,3}\left( \begin{matrix}
\left\{\frac{1}{2}\right\},\left\{\right\}\\
\left\{\frac{-3}{4}, 0\right\}, \left\{-\frac{1}{4}\right\}
\end{matrix} \bigg | n\pi^2\right).
\end{align*}
This follows from another application of \eqref{b1 minus bq} with $a_1=1/2, b_1=0$ and $b_q=-1/4$ This establishes the identity. 
\end{proof}
Next, we derive an identity between two infinite series involving the generalized divisor functions using two of the three different representations for $\mathcal{F}_k(s)$ derived in the paper. 
\begin{proof}[Corollary \textup{\ref{limiting}}][]
Equate the right-hand sides of \eqref{formula3} and \eqref{lks final other} and then let $s\to1/k$ while observing that
\begin{align*}
\lim_{s\to\frac{1}{k}}\left[\frac{1}{2}\zeta(ks)+\frac{1}{(k-1)\Gamma(s)}\Gamma\left(\frac{1-s}{k-1}\right)\Gamma\left(\frac{ks-1}{k-1}\right)\zeta\left(\frac{ks-1}{k-1}\right)\right]=\frac{1}{2(k-1)}\left(\gamma k-\log(2\pi)+\psi\left(\frac{1}{k}\right)\right)
\end{align*}
and
\begin{align*}
\int_1^{\infty} \cos(2\pi n x) x^{-\frac{1}{k}-2} \, dx=\frac{k}{k+1}\left\{\pFq{1}{2}{-\frac{1}{2}-\frac{1}{2k}}{\frac{1}{2},\frac{1}{2}-\frac{1}{2k}}{-n^2\pi^2}+(2\pi n)^{1+\frac{1}{k}}\Gamma\left(-\frac{1}{k}\right)\sin\left(\frac{\pi}{2k}\right)\right\},
\end{align*}
to arrive at \eqref{general limiting case} upon simplification.
\end{proof}

\begin{proof}[Corollary \textup{\ref{limiting k=3}}][]
This follows from Corollary \ref{limiting} upon letting $k=3$ and using \eqref{def2varbessel} and Slater's theorem to reduce the Meijer $G$-function in terms of ${}_1F_{2}$-hypergeometric function.	
\end{proof}

\section{A Bessel series representation for the Dirichlet series of $d^{\left(\frac{1}{2}\right)}(n)$}
We now concentrate on the analogue of Theorem \ref{cs type formula} when $k=2$. Observe that one cannot let $k=2$ in Theorem \ref{cs type formula} itself since the result there holds for $1<\textup{Re}(s)<k-1$. Hence, we assume $1<\textup{Re}(s)<2$, and then work out the corresponding result.
\begin{proof}[Theorem \textup{\ref{k=2 cs type formula}}][]
We omit the details since it is along the similar lines to that of Theorem \ref{cs type formula} except that we have to consider the contribution of the residue at an extra simple pole $-s-1$, besides the simple poles at $-s$ and $\frac{1-ks}{k-1}=1-2s$, owing to the condition $1<\textup{Re}(s)<2$. The residue at this pole is  $R_{-s-1} = -\pi (2\pi)^{-2s} s \, \zeta(s-1)\zeta(s+1)$. One then uses Theorem \ref{integral conjecture} with $y=2\pi n$ and employs the well-known fact \cite{wolfram} with $a=s/2, b=s-1/2$ to arrive at
\begin{align*}
G^{3, 2}_{2, 4}\left( \begin{matrix}
	\left\{\frac{1+s}{2}, \frac{s}{2}\right\},\left\{\right\}\\
	\left\{0,s-\frac{1}{2}, s\right\}, \left\{\frac{1}{2}\right\},
\end{matrix} \bigg | \pi^2n^2\right)=(-1)^{n+1}\frac{\sqrt{2}\pi^{s+\frac{3}{2}}n^{s-\frac{1}{2}}}{\sin(\pi s)}Y_{s-\frac{1}{2}}(\pi n).
\end{align*}
\end{proof}
%\begin{remark}
%One may also use the evaluation of the integral in Theorem \ref{integral conjecture} for $k=2$ given in \cite[p.~68]{atk}. 
%\end{remark}
\begin{proof}[Corollary \textup{\ref{similar to atkinson's}}][]
	The proof follows simply by equating the two representations for $\mathcal{L}_2(s)$ from \eqref{by atkinson} and Theorem \ref{k=2 cs type formula}. 	
\end{proof}

\begin{proof}[Corollary \textup{\ref{pc}}][]
The first result in this corollary follows by letting $s\to1/2$ in Corollary \ref{similar to atkinson's} and then making use of the limit evaluation
\begin{align*}
\lim_{s\to\frac{1}{2}}\left(\frac{1}{2}\zeta(2s)+\frac{\Gamma(1-s)\Gamma\left(2s-1\right)\zeta(2s-1)}{\Gamma(s)}\right)=\frac{\gamma}{2}-\frac{1}{2}\log(2\pi)-\log(2),
\end{align*}
 which can be easily proved by using the Laurent series expansion of the Gamma and Riemann zeta functions. The second one follows by letting $s=3/2$ in Corollary \ref{similar to atkinson's}.
	
\end{proof}
\section{Concluding remarks}\label{cr}

While Wigert's opinion that his divisor function $d^{(\frac{1}{k})}(n)$ is not easy to work with is correct, we hope to have convinced the reader it is definitely worth studying. We list below some problems for future work.

(1) Matsumoto \cite{matsumoto sugaku} (see also \cite[Equation (1.5)]{choiematsumoto} showed that $\mathcal{L}_2(s)$ satisfies the functional equation
\begin{align*}
\mathcal{L}_2(s)&=\frac{\Gamma(1-s_1)\Gamma(s_1+s_2-1)}{\Gamma(s_2)}	\zeta(s_1+s_2-1)\\
&\quad+\Gamma(1-s_1)\left\{F_+(1-s_2, 1-s_1; \mathfrak{U}_1)+F_-(1-s_2, 1-s_1; \mathfrak{U}_1)\right\},
\end{align*}
where
\begin{equation*}
F_\pm(s_1, s_2;\mathfrak{U}_1):=\sum_{k\geq 1}\sigma_{s_1+s_2-1}(k)\Psi(s_2, s_1+s_2;\pm2\pi ik),
\end{equation*}
with
\begin{equation*}
\Psi(a,b;x):=\frac{1}{\Gamma(a)}\int_{0}^{e^{i\phi}\infty}e^{-xy}y^{a-1}(1+y)^{b-a-1}\, dy
\end{equation*}
for Re$(a)>0, -\pi<\phi<\pi$, and $|\phi+\arg(x)|<\pi/2$ is the confluent hypergeometric function. 
In light of the above, it would certainly of merit to investigate whether, for $k>2$, $\mathcal{L}_k(s)$ also admits a functional equation.

(2) The question on the distribution of zeros of $\mathcal{L}_2(s)$ was first proposed by Zhao \cite{zhao} and has been studied by Nakamura and Pa\'{n}kowski \cite{nakamura pankowski} Matsumoto and Sh\=oji \cite{matsumoto moscow}. The latter two authors showed that the distribution of the zeros of $\mathcal{L}_2(s)$ is similar in some sense to that of the zeros of Hurwitz zeta function.
 In the similar vein, what can be said about the zeros of $\mathcal{L}_k(s)$ for $k>2$?
 
(3) Lastly, it would be interesting to obtain a Vorono\"{\dotlessi} summation formula for $d^{(\frac{1}{k})}(n)$. 
%Vorono\"{\dotlessi} summation formula for $d^{\left(\frac{1}{k}\right)}(n)$? Can Theorem \ref{cs type formula} be used for that purpose? Give the idea.

\section*{Acknowledgements}
The first author is supported by the Core Research Grant CRG/2023/002698 and MATRICS grant MTR/2023/000837 of ANRF. The second author is supported by the Swarnajayanti Fellowship grant SB/SJF/2021-22/08 of ANRF (Government of India) and by the N Rama Rao chair professorship at IIT Gandhinagar.

\section*{Appendix}
\begin{center}
	\vspace{1mm}
	\textbf{A reduction of a Meijer $G$-function to  a sum of two infinite series  }\\
	\end{center}
	\vspace{1mm}
\begin{theorem}\label{meijerg specialization}
Let $\mathcal{G}_k(s,y)$ be defined in \eqref{short notation}.	For $y>0, k\in\mathbb{N}, k\geq2$ and $s\in\mathbb{C}$ such that the difference between $\frac{2k-3+s}{2k-2}$ or $\frac{k-2+s}{2k-2}$ and each of $0,\frac{1}{k-1},\cdots,\frac{k-2}{k-1}, \frac{sk-1}{2k-2}$, or $ \frac{sk-1}{2k-2}+\frac{1}{2}$ is not a positive integer, we have
	\begin{align}\label{meijerg specialization eqn}
		\frac{2^{s-2}}{\sqrt{\pi(k-1)}\Gamma(s)}\mathcal{G}_k(s,y)
		&=\frac{y^{ks-1}}{\Gamma(s)}\bigg\{\sum_{j=0}^{\infty}\frac{(-y^{k-1})^{j}}{j!}\Gamma(j+s)\Gamma(1+j-k(s+j))\sin\left(\frac{\pi}{2}(ks+(k-1)j)\right)\nonumber\\
		&\quad+\frac{1}{(k-1)}\sum_{m=0}^{\infty}\frac{(-1)^m}{(2m)!}y^{1+2m-ks}\Gamma\left(\frac{1+2m-s}{k-1}\right)\Gamma\left(\frac{ks-2m-1}{k-1}\right)\bigg\}.
	\end{align}
\end{theorem}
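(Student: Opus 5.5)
The plan is to evaluate $\mathcal{G}_k(s,y)$ directly from its Mellin--Barnes definition \eqref{MeijerG} by closing the contour to the right and summing residues, i.e.\ Slater's theorem. There is one twist: two of the $b$'s in the first group form a pair differing by $\tfrac12$, and the other $k-1$ $b$'s are the arithmetic progression $\{0,\tfrac{1}{k-1},\dots,\tfrac{k-2}{k-1}\}$. These progressions should be recombined by the Gauss multiplication formula \eqref{gmf} \emph{before} residues are taken. Concretely, write the integrand of $\mathcal{G}_k$ with $X=(y/(2k-2))^{2k-2}$ and variable $\xi$:
\begin{align*}
\frac{\prod_{j=0}^{k-2}\Gamma\left(\frac{j}{k-1}-\xi\right)\Gamma\left(\frac{sk-1}{2k-2}-\xi\right)\Gamma\left(\frac{sk-1}{2k-2}+\frac12-\xi\right)\Gamma\left(1-a_1+\xi\right)\Gamma\left(1-a_2+\xi\right)}{\prod_{t=1}^{k-1}\Gamma\left(1-\frac{2t-1}{2k-2}+\xi\right)}X^{\xi}.
\end{align*}
Then:
\begin{itemize}
\item By \eqref{gmf}, $\prod_{j}\Gamma(\tfrac{j}{k-1}-\xi)$ becomes a constant times $(k-1)^{(k-1)\xi}\Gamma(-(k-1)\xi)$.
\item By duplication, the pair becomes a constant times $2^{2\xi}\Gamma(\tfrac{sk-1}{k-1}-2\xi)$.
\item The numerator pair $\Gamma(1-a_1+\xi)\Gamma(1-a_2+\xi)$, with $1-a_1=\tfrac{1-s}{2k-2}$ and $1-a_2=\tfrac{k-s}{2k-2}$, also differs by $\tfrac12$, so it becomes a constant times $\Gamma(\tfrac{1-s}{k-1}+2\xi)$.
\item The denominator product over odd multiples of $\tfrac{1}{2k-2}$ is a ratio of a $(2k-2)$-fold product to a $(k-1)$-fold product, so \eqref{gmf} turns it into $\Gamma(\tfrac12+(k-1)\xi)$ up to constants and powers.
\end{itemize}
The powers of $2$ and $k-1$ should exactly cancel $X^\xi=(y/(2k-2))^{(2k-2)\xi}$, leaving $y^{(2k-2)\xi}$. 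This undoes the manipulation in the proof of Lemma~\ref{lemma:7.1}, so the constant $2^{s-2}/\sqrt{\pi(k-1)}$ can be tracked by comparison with that proof.

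After substituting $w=(k-1)\xi$ (or a similar normalization), the left side of \eqref{meijerg specialization eqn} becomes a single Mellin--Barnes integral of the form
\begin{align*}
\frac{C}{\Gamma(s)}\frac{1}{2\pi i}\int \frac{\Gamma(-w)\,\Gamma\left(\frac{sk-1-2w}{k-1}\right)\Gamma\left(\frac{1-s+2w}{k-1}\right)}{\Gamma\left(\frac12+w\right)}\,y^{2w}\,dw .
\end{align*}
Note $\Gamma(-w)/\Gamma(\tfrac12+w)$ is essentially $\Gamma(-w)\Gamma(\tfrac12-w)\cos(\pi w)/\pi$. I would then close the contour to the right, which should be legitimate because the $G$-function has $q>p$ and the relevant series are entire in $y$. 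Two families of poles appear, and the hypothesis that no $a_i-b_j$ is a positive integer is exactly what keeps them simple and disjoint.

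\textbf{Poles of $\Gamma(-w)$ at $w=j/1$ (i.e.\ $\xi=j/(k-1)$).} For the poles at $\xi$ with $(k-1)\xi=j$, the residue gives terms proportional to
\begin{align*}
y^{2j}\,\Gamma\left(\tfrac{sk-1-2j}{k-1}\right)\Gamma\left(\tfrac{1-s+2j}{k-1}\right)/\bigl(j!\,\Gamma(\tfrac12+j)\bigr).
\end{align*}
Since $j!\,\Gamma(j+\tfrac12)$ is $\sqrt{\pi}(2j)!/4^j$, this reproduces, with $m=j$, the second series $\frac{(-1)^m}{(2m)!}y^{1+2m-ks}\Gamma(\frac{1+2m-s}{k-1})\Gamma(\frac{ks-2m-1}{k-1})$ once the overall factor $y^{ks-1}$ is absorbed. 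The overall power $y^{ks-1}$ times $y^{1+2m-ks}$ indeed gives $y^{2m}$, which is consistent.

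\textbf{Poles of $\Gamma(\tfrac{sk-1-2w}{k-1})$ at $2w=sk-1+(k-1)j$.} These give terms with $y^{ks-1+(k-1)j}$, matching the first series. The residue produces three factors:
\begin{itemize}
\item $\Gamma\bigl(\tfrac{1-s+ks-1+(k-1)j}{k-1}\bigr)=\Gamma(s+j)$, exactly the factor in \eqref{meijerg specialization eqn};
\item $\Gamma(-w)$ at $w=\tfrac{ks-1+(k-1)j}{2}$;
\item $1/\Gamma(\tfrac12+w)$.
\end{itemize}
Reflection and duplication should convert $\Gamma(-w)/\Gamma(\tfrac12+w)$ into $\Gamma(1-ks-(k-1)j)$, i.e.\ $\Gamma(1+j-k(s+j))$, times $\sin\bigl(\tfrac{\pi}{2}(ks+(k-1)j)\bigr)$ up to $2$-powers that cancel against $y$'s normalization. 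This is essentially the inverse of rewriting $G(z)=\Gamma(1-ks-(k-1)z)\sin(\tfrac{\pi}{2}(ks+(k-1)z))$ in the proof of Theorem~\ref{cs type formula}. The sign $(-1)^j$ comes from the residue of $\Gamma$ at a nonpositive integer.

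The main obstacle is bookkeeping the constants: the powers of $2$, $\pi$ and $k-1$ from repeated use of \eqref{gmf}, and the exact identification of the sine factor. I would fix normalization by checking $k=2$ against the Bessel $Y$ evaluation quoted in the proof of Theorem~\ref{k=2 cs type formula}. A secondary point is justifying closing the contour. Here the integrand decays factorially along rightward rays because $q-p=2k-2>0$, so the residue series converge for all $y>0$. I would cite Slater's theorem \cite[p.~145]{Luke} rather than redo this.
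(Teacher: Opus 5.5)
Your route is correct, and it differs from the paper's. The paper applies Slater's theorem to $G^{k+1,2}_{2,2k}$ as it stands, which produces $k+1$ series of type ${}_2F_{2k-1}$. It then treats only odd $k=2\ell+1$, leaving even $k$ to the reader, and evaluates the Gamma coefficients with \eqref{gmf}. Next it merges the terms $h=2\ell+1,2\ell+2$, and each pair $h=j,\,j+\ell$, into a ${}_1F_{2\ell}$ via the even/odd splitting identity \eqref{hypergeometric identity}. Finally it reassembles the $\ell$ resulting series into the single $m$-series by splitting $m$ into residue classes modulo $\ell$.

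You instead collapse the Mellin--Barnes integrand first. Only two families of right-hand poles remain, and each produces one of the two series directly. This gains three things:
\begin{itemize}
\item it is uniform in $k$, with no parity split;
\item it removes all the hypergeometric pairing and residue-class bookkeeping;
\item it shows why the appendix identity holds at all.
\end{itemize}
On the last point: under $2w=ks-1+(k-1)z$ and $y=2\pi n^{1/(k-1)}$, your reduced integrand becomes, up to a factor independent of $z$, exactly $F(z)G(z)\left((2\pi)^{k-1}n\right)^z$ from \eqref{ink}. Its residues at $z=j$ and $z=\frac{2m+1-ks}{k-1}$ are the terms of the two series. The paper's route gives the intermediate ${}_1F_{2\ell}$ forms as a by-product; yours does not need them.

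Two bookkeeping corrections.

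\emph{Powers of $2$.} They do not cancel $X^{\xi}$ completely. The two duplications contribute $2^{2\xi}$ and $2^{-2\xi}$, which cancel each other. After the $(k-1)$-powers also cancel, $X^{\xi}$ leaves behind $(y/2)^{(2k-2)\xi}=(y/2)^{2w}$, not $y^{2w}$. Precisely,
\begin{align*}
\mathcal{G}_k(s,y)=\frac{2^{2-s}\pi}{\sqrt{k-1}}\cdot\frac{1}{2\pi i}\int_L\frac{\Gamma(-w)\,\Gamma\left(\frac{ks-1-2w}{k-1}\right)\Gamma\left(\frac{1-s+2w}{k-1}\right)}{\Gamma\left(\frac12+w\right)}\left(\frac{y}{2}\right)^{2w}dw .
\end{align*}
The factor $4^{-w}$ is needed in both families:
\begin{itemize}
\item in the first, it turns $m!\,\Gamma(m+\frac12)=\sqrt{\pi}\,(2m)!/4^m$ into $(2m)!$;
\item in the second, it absorbs the $2^{1+2w}$ from $\Gamma(-w)\Gamma(\frac12-w)=2^{1+2w}\sqrt{\pi}\,\Gamma(-2w)$.
\end{itemize}
With $y^{2w}$ you would be left with a stray $4^{m}$. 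With the correct constant, the residues give $\frac{1}{\sqrt{\pi}}\left\{\sum_m(\cdots)+(k-1)y^{ks-1}\sum_j(\cdots)\right\}$. This yields \eqref{meijerg specialization eqn} exactly, so no normalization check at $k=2$ is needed.

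\emph{The hypothesis.} The condition that no $a_i-b_j$ is a positive integer only guarantees that a separating contour $L$ exists. It does not make the right-hand poles simple. The two families of poles collide exactly when two of $b_1,\dots,b_{k+1}$ differ by an integer. At such $s$, individual terms of both series in \eqref{meijerg specialization eqn} are singular, and the identity holds only as a limit in $s$. The paper's use of Slater's theorem tacitly assumes the same genericity, so this is not a gap relative to the paper. You should not credit it to the stated hypothesis, though.
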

\begin{proof}
	By Slater's theorem \cite[p.~145]{Luke}, 
	\begin{align*}
		%&G_{2, \, \, 2k}^{k+1, \, \, 2}\left( \begin{matrix}
		%	\left\{\frac{2k-3+s}{2k-2}, \frac{k-2+s}{2k-2}\right\}\\
		%		\left\{0,\frac{1}{k-1},\cdots,\frac{k-2}{k-1}, \frac{sk-1}{2k-2}, \frac{sk-1}{2k-2}+\frac{1}{2}\right\}, \left\{\frac{1}{2k-2},\frac{3}{2k-2},\cdots,\frac{2k-3}{2k-2}\right\}
		%\end{matrix} \bigg | \left(\frac{y}{2k-2}\right)^{2k-2} \right)\nonumber\\
		\mathcal{G}_k(s,y)&=\sum_{h=1}^{k+1}\frac{\prod_{j=1}^{k+1}\Gamma(b_j-b_h)^*\Gamma\left(b_h+\frac{1-s}{2k-2}\right) \Gamma\left(\frac{1}{2}+b_h+\frac{1-s}{2k-2}\right)}{\prod_{j=k+2}^{2k}\Gamma(1+b_h-b_j)}\left(\frac{y}{2k-2}\right)^{(2k-2)b_h}\nonumber\\
		&\quad\qquad\times\pFq2{2k-1}{b_h+\frac{1-s}{2k-2}, \frac{1}{2}+b_h+\frac{1-s}{2k-2}}{\left\langle 1+b_h-b_{i}\right\rangle_{i=1\atop i\neq h}^{2k}}{(-1)^{k+1}\left(\frac{y}{2k-2}\right)^{2k-2}}.
	\end{align*}
	We prove only the case when $k$ is odd, that is, $k=2\ell+1$. The corresponding case when $k$ is even can be similarly proved. 
	
	To that end, separating the terms corresponding to $h=2\ell+1$ and $h=2\ell+2$, noting that 
	\begin{align}\label{bi}
		b_h&=\frac{h-1}{2\ell}\hspace{5mm} (1\leq h\leq2\ell),\nonumber\\
		b_{2\ell+1}&=\frac{(2\ell+1)s-1}{4\ell},\hspace{3mm} b_{2\ell+2}=\frac{2\ell(s+1)+s-1}{4\ell},
	\end{align}
	and using the duplication formula for the Gamma function, we see that
	\begin{align}\label{meijerG simplification}
		\mathcal{G}_{2\ell+1}(s,y)&=\sum_{h=1}^{2\ell}\frac{\prod_{j=1\atop j\neq h}^{2\ell}\Gamma\left(\frac{j-h}{2\ell}\right)\Gamma\left(\frac{2\ell s+s-1}{4\ell}-\frac{h-1}{2\ell}\right))\Gamma\left(\frac{1}{2}+\frac{2\ell s+s-1}{4\ell}-\frac{h-1}{2\ell}\right)}{\prod_{j=1}^{2\ell}\Gamma\left(1+\frac{2h-2j-1}{4\ell}\right)}\frac{\sqrt{\pi}}{2^{\frac{(h-1)}{\ell}+\frac{1-s}{2\ell}-1}}\nonumber\\
		&\quad\times\Gamma\left(\frac{h-1}{\ell}+\frac{1-s}{2\ell}\right)\left(\frac{y}{4\ell}\right)^{2h-2}\pFq2{4\ell+1}{\frac{h-1}{2\ell}+\frac{1-s}{4\ell},\frac{1}{2}+\frac{h-1}{2\ell}+\frac{1-s}{4\ell}}{\left\langle1+\frac{h-1}{2\ell}-b_{i}\right\rangle_{i=1\atop i\neq h}^{4\ell+2}}{\left(\frac{y}{4\ell}\right)^{4\ell}}\nonumber\\
		&\quad+\frac{\prod_{j=1}^{2\ell}\Gamma\left(\frac{j-1}{2\ell}-\frac{2\ell s+s-1}{4\ell}\right)\Gamma\left(\frac{1}{2}\right)\frac{\sqrt{\pi}}{2^{s-1}}\Gamma(s)\left(\frac{y}{4\ell}\right)^{s(2\ell+1)-1}}{\prod_{j=1}^{2\ell}\Gamma\left(1+\frac{s(2\ell+1)-2j}{4\ell}\right)}\nonumber\\
		&\quad\times\pFq2{4\ell+1}{\frac{s}{2},\frac{s+1}{2}}{\left\langle1+\frac{s(2\ell+1)-1}{4\ell}-b_{i}\right\rangle_{i=1\atop i\neq2\ell+1}^{4\ell+2}}{\left(\frac{y}{4\ell}\right)^{4\ell}}\nonumber\\
		&\quad+\frac{\prod_{j=1}^{2\ell}\Gamma\left(\frac{j-1}{2\ell}-\frac{2\ell (s+1)+s-1}{4\ell}\right)\Gamma\left(-\frac{1}{2}\right)\frac{\sqrt{\pi}}{2^{s}}\Gamma(s+1)\left(\frac{y}{4\ell}\right)^{2\ell(s+1)+s-1}}{\prod_{j=1}^{2\ell}\Gamma\left(1+\frac{2\ell(s+1)+s-2j}{4\ell}\right)}\nonumber\\
		&\quad\times\pFq2{4\ell+1}{\frac{s+1}{2}, \frac{s}{2}+1}{\left\langle1+\frac{2\ell(s+1)+s-1}{4\ell}-b_{i}\right\rangle_{i=1\atop i\neq2\ell+2}^{4\ell+2}}{\left(\frac{y}{4\ell}\right)^{4\ell}}.
	\end{align}
%	The Gauss multiplication formula is given by
%	\begin{equation}\label{gmf}
%		\prod_{k=1}^{m}\Gamma\left(z+\frac{k-1}{m}\right)=(2\pi)^{\frac{1}{2}(m-1)}m^{\frac{1}{2}-mz}\Gamma(mz).
%	\end{equation}
	The Gamma-products occurring in \eqref{meijerG simplification} can be easily evaluated using \eqref{gmf} as given below:
	\begin{align*}
		&\prod\limits_{j=1 \atop{i\neq h}}^{2\ell}\Gamma\left({j-h \over 2\ell}\right) 
		= \frac{(-1)^{h-1}(2\pi)^{\ell-\frac{1}{2}}(2\ell)^{h-\frac{3}{2}}}{\Gamma(h)},\\
		&\prod_{j=1}^{2\ell}\Gamma\left(1+\frac{2h-2j-1}{4\ell}\right)=(2\pi)^{\ell-\frac{1}{2}}(2\ell)^{1-h}\Gamma\left(h-\frac{1}{2}\right),\\
		&\prod_{j=1}^{2\ell}\Gamma\left(\frac{j-1}{2\ell}-\frac{2\ell s+s-1}{4\ell}\right)=(2\pi)^{\ell-\frac{1}{2}}(2\ell)^{\frac{s}{2}+\ell s}\Gamma\left(\frac{1-s}{2}-\ell s\right),\\
		&\prod_{j=1}^{2\ell}\Gamma\left(1+\frac{s(2\ell+1)-2j}{4\ell}\right)=(2\pi)^{\ell-\frac{1}{2}}(2\ell)^{\frac{1-s}{2}-\ell s}\Gamma\left(\frac{s}{2}+\ell s\right),\\
		&\prod_{j=1}^{2\ell}\Gamma\left(\frac{j-1}{2\ell}-\frac{2\ell (s+1)+s-1}{4\ell}\right)=(2\pi)^{\ell-\frac{1}{2}}(2\ell)^{\frac{s}{2}+\ell s+\ell}\Gamma\left(\frac{1-s}{2}-\ell s-\ell\right),\\
		&\prod_{j=1}^{2\ell}\Gamma\left(1+\frac{2\ell(s+1)+s-2j}{4\ell}\right)=(2\pi)^{\ell-\frac{1}{2}}(2\ell)^{\frac{1-s}{2}-\ell-\ell s}\Gamma\left(\frac{s}{2}+\ell s+\ell\right).
	\end{align*}
	Employing these evaluations in \eqref{meijerG simplification}, we see that
	\begin{align}\label{meijer G simplification 1.5}
		\mathcal{G}_{2\ell+1}(s,y)=A_{\ell}( s,y)+B_{\ell}(s,y),
	\end{align}
	where
	\begin{align}\label{meijer G simplification 2}
		A_{\ell}( s,y)&:=\frac{\sqrt{\pi}2^{\frac{3}{2}-s}}{\sqrt{\ell}}\sum_{h=1}^{2\ell}(-1)^{h-1}\frac{\Gamma\left(s+\frac{s-1}{2\ell}-\frac{h-1}{\ell}\right)\Gamma\left(\frac{1-s}{2\ell}+\frac{h-1}{\ell}\right)}{\Gamma(2h-1)}y^{2h-2}\nonumber\\
		&\qquad\qquad\times\pFq2{4\ell+1}{\frac{h-1}{2\ell}+\frac{1-s}{4\ell},\frac{1}{2}+\frac{h-1}{2\ell}+\frac{1-s}{4\ell}}{\left\langle1+\frac{h-1}{2\ell}-b_{i}\right\rangle_{i=1\atop i\neq h}^{4\ell+2}}{\left(\frac{y}{4\ell}\right)^{4\ell}}\nonumber\\
		B_{\ell}(s,y)&:=\pi2^{\frac{5}{2}-2s-2\ell s}\sqrt{\ell}y^{2\ell s+s-1}\Gamma(s)\frac{\Gamma\left(\frac{1-s-2\ell s}{2}\right)}{\Gamma\left(\frac{2\ell s+s}{2}\right)}\left\{\pFq2{4\ell+1}{\frac{s}{2},\frac{s+1}{2}}{\left\langle1+\frac{s(2\ell+1)-1}{4\ell}-b_{i}\right\rangle_{i=1\atop i\neq2\ell+1}^{4\ell+2}}{\left(\frac{y}{4\ell}\right)^{4\ell}}\right.\nonumber\\
		&\quad\left.-s\left(\frac{y}{2}\right)^{2\ell}\frac{\Gamma\left(\frac{1-s-2\ell s}{2}-\ell\right)\Gamma\left(\frac{2\ell s+s}{2}\right)}{\Gamma\left(\frac{2\ell s+s}{2}+\ell\right)\Gamma\left(\frac{1-s-2\ell s}{2}\right)}\pFq2{4\ell+1}{\frac{s+1}{2},\frac{s}{2}+1}{\left\langle1+\frac{s(2\ell+1)-1}{4\ell}-b_{i}+\frac{1}{2}\right\rangle_{i=1\atop i\neq2\ell+2}^{4\ell+2}}{\left(\frac{y}{4\ell}\right)^{4\ell}}\right\}.
	\end{align}
	To simplify the right-hand side of \eqref{meijer G simplification 1.5}, we will repeatedly use the identity
	\begin{align}\label{hypergeometric identity}
		\pFq1{2\ell}{a}{c_1,c_2,\cdots,c_{\ell}}{z}&=\pFq2{4\ell+1}{\frac{a}{2},\frac{a+1}{2}}{\frac{c_1}{2},\frac{c_1+1}{2},\cdots,\frac{c_{2\ell}}{2},\frac{c_{2\ell}+1}{2},\frac{1}{2}}{\frac{z^2}{2^{4\ell}}}\nonumber\\
		&\quad+\frac{az}{c_1\cdots c_{2\ell}}\pFq2{4\ell+1}{\frac{a+1}{2},\frac{a+2}{2}}{\frac{c_1+1}{2},\frac{c_1+2}{2},\cdots,\frac{c_
				{2\ell}+1}{2}, \frac{b_\ell+2}{2},\frac{3}{2}}{\frac{z^2}{2^{4\ell}}},
	\end{align}
	which can be easily proved by using the series definition of the ${}_1F_{2\ell}$ on the left and splitting it into two sums, one corresponding to the even values of the summation index, and another corresponding to the odd values.
	
	We first simplify $B_{\ell}(y, s)$. 
	Using the values of $b_i, $ \eqref{bi}, we define
	\begin{align*}
		&\tilde{b}_i:=1+\frac{s(2\ell+1)-1}{4\ell}-\frac{i-1}{2\ell}\hspace{5mm}(1\leq i\leq 2\ell),\\
		&\tilde{b}_{2\ell+2}:=\frac{1}{2},
	\end{align*}
	and since $b_{2\ell+2+i}=\frac{2i-1}{4\ell}$ for $1\leq i\leq 2\ell$, we let
	\begin{align*}
		\tilde{b}_{2\ell+2+i}:=1+\frac{s(2\ell+1)-1}{4\ell}-\frac{2i-1}{4\ell}\hspace{5mm}(1\leq i\leq 2\ell),
	\end{align*}
	(Note that the value $\tilde{b}_{2\ell+1}$ corresponding to $b_{2\ell+1}$ is not to be considered in the two ${}_2F_{4\ell+1}$s.)
	Now for $1\leq i\leq \ell$,
	\begin{equation*}
		\tilde{b}_{\ell+i}+\frac{1}{2}=\tilde{b}_i,\hspace{4mm} \tilde{b}_{2\ell+2+\ell+i}+\frac{1}{2}=\tilde{b}_{2\ell+2+i}.
	\end{equation*}
	We let $z=(-1)^{\ell+1}\left(\frac{y}{2\ell}\right)^{2\ell}, a=s$, $c_i=2\tilde{b}_{\ell+i}$ for $1\leq i\leq \ell$ and $c_i=2\tilde{b}_{2\ell+2+i}$ for $\ell+1\leq i\leq 2\ell$ in \eqref{hypergeometric identity} and note that
	\begin{align}\label{product1}
		\frac{az}{c_1\cdots c_{2\ell}}=-s\left(\frac{y}{2}\right)^{2\ell}\frac{\Gamma\left(\frac{1-s-2\ell s}{2}-\ell\right)\Gamma\left(\frac{2\ell s+s}{2}\right)}{\Gamma\left(\frac{2\ell s+s}{2}+\ell\right)\Gamma\left(\frac{1-s-2\ell s}{2}\right)}.
	\end{align}
	The latter can be proved easily using the well-known properties of the gamma function.
	
	Using \eqref{product1} and the fact that one of the bottom parameter of the ${}_2F_{4\ell+1}$ occurring in second-last expressions on the right-hand side of \eqref{meijer G simplification 2} is $1+\frac{s(2\ell+1)-1}{4\ell}-b_{2\ell+2}=\frac{1}{2}$, and similarly, one of the bottom parameters of of the ${}_2F_{4\ell+1}$ in the corresponding last expression is $1+\frac{s(2\ell+1)-1}{4\ell}-b_{2\ell+1}+\frac{1}{2}=\frac{3}{2}$, the identity \eqref{hypergeometric identity} leads to 
	\begin{align}\label{finally3}
		%&\pi2^{\frac{5}{2}-2s-2\ell s}\sqrt{\ell}y^{2\ell s+s-1}\Gamma(s)\frac{\Gamma\left(\frac{1-s-2\ell s}{2}\right)}{\Gamma\left(\frac{2\ell s+s}{2}\right)}\left\{\pFq2{4\ell+1}{\frac{s}{2},\frac{s+1}{2}}{\left\langle1+\frac{s(2\ell+1)-1}{4\ell}-b_{i}\right\rangle_{i=1\atop i\neq2\ell+1}^{4\ell+2}}{\left(\frac{y}{4\ell}\right)^{4\ell}}\right.\nonumber\\
		%&\quad\left.-s\left(\frac{y}{2}\right)^{2\ell}\frac{\Gamma\left(\frac{1-s-2\ell s}{2}-\ell\right)\Gamma\left(\frac{2\ell s+s}{2}\right)}{\Gamma\left(\frac{2\ell s+s}{2}+\ell\right)\Gamma\left(\frac{1-s-2\ell s}{2}\right)}\pFq2{4\ell+1}{\frac{s+1}{2},\frac{s}{2}+1}{\left\langle1+\frac{s(2\ell+1)-1}{4\ell}-b_{i}+\frac{1}{2}\right\rangle_{i=1\atop i\neq2\ell+2}^{4\ell+2}}{\left(\frac{y}{4\ell}\right)^{4\ell}}\right\}\nonumber\\
		B_{\ell}(s,y)&=\pi2^{\frac{5}{2}-2s-2\ell s}\sqrt{\ell}y^{2\ell s+s-1}\Gamma(s)\frac{\Gamma\left(\frac{1-s-2\ell s}{2}\right)}{\Gamma\left(\frac{2\ell s+s}{2}\right)}\pFq1{2\ell}{s}{\left\langle1+\frac{s(2\ell+1)-i}{2\ell}\right\rangle_{i=2\ell}}{(-1)^{\ell+1}\left(\frac{y}{2\ell}\right)^{2\ell}}\nonumber\\
		&=\frac{\pi2^{\frac{5}{2}-2s-2\ell s}\sqrt{\ell}y^{2\ell s+s-1}\Gamma\left(\frac{1-s-2\ell s}{2}\right)}{\Gamma\left(\frac{2\ell s+s}{2}\right)\Gamma(1-(2\ell+1)s)\sin\left(\frac{\pi s(2\ell+1)}{2}\right)}\nonumber\\
		&\quad\times\sum_{j=0}^{\infty}\frac{(-y)^{2\ell j}}{j!}\Gamma(j+s)\Gamma(1+j-(2\ell+1)(s+j))\sin\left(\frac{\pi}{2}((2\ell+1) s+2\ell j)\right)\nonumber\\
		&=\sqrt{\pi(k-1)}2^{2-s}y^{ks-1}\sum_{j=0}^{\infty}\frac{(-y)^{(k-1)j}}{j!}\Gamma(j+s)\Gamma(1+j-k(s+j))\sin\left(\frac{\pi}{2}(ks+(k-1)j)\right),
	\end{align}
	where the penultimate step follows easily by writing the series definition of the ${}_1F_{2\ell}$ and simplifying the gamma factors using \eqref{gmf}, and where, in the last step, we replaced $2\ell+1$ by $k$.
	
	Next, consider $A_{\ell}(s, y)$ defined in \eqref{meijer G simplification 2}. Denote by $C_{\ell}(s, y, j)$ the sum of the summands of $A_{\ell}(s, y)$ corresponding to $h=j$ and $h=j+\ell$, where $1\leq j\leq \ell$ so that 
	\begin{equation}\label{AC}
		A_{\ell}( s,y)=\frac{\sqrt{\pi}2^{\frac{3}{2}-s}}{\sqrt{\ell}}\sum_{j=1}^{\ell}C_{\ell}(s,y, j).
	\end{equation}
	It is easy to see that
	\begin{align*}
		C_{\ell}(s, y, j)=(-1)^{j-1}\frac{\Gamma\left(s+\frac{s-2j+1}{2\ell}\right)\Gamma\left(\frac{-1-s+2j}{2\ell}\right)}{\Gamma(2j-1)}y^{2j-2}\left\{\pFq2{4\ell+1}{\frac{j-1}{2\ell}+\frac{1-s}{4\ell},\frac{1}{2}+\frac{j-1}{2\ell}+\frac{1-s}{4\ell}}{\left\langle1+\frac{j-1}{2\ell}-b_{i}\right\rangle_{i=1\atop i\neq j}^{4\ell+2}}{\left(\frac{y}{4\ell}\right)^{4\ell}}\right.\nonumber\\
		\left.+\frac{(-1)^{\ell}y^{2\ell}}{2^{2\ell}}\frac{\left(\frac{1-s}{2\ell}+\frac{j-1}{\ell}\right)}{\left(s+\frac{s-1}{2\ell}-\frac{j-1}{\ell}-1\right)\left(j-\frac{1}{2}\right)_\ell(j)_\ell}\pFq2{4\ell+1}{\frac{j-1}{2\ell}+\frac{1-s}{4\ell}+\frac{1}{2},1+\frac{j-1}{2\ell}+\frac{1-s}{4\ell}}{\left\langle1+\frac{j-1}{2\ell}-b_{i}+\frac{1}{2}\right\rangle_{i=1\atop i\neq j+\ell}^{4\ell+2}}{\left(\frac{y}{4\ell}\right)^{4\ell}}\right\}.
	\end{align*}
	We now simplify the above expression using \eqref{hypergeometric identity} again. Note that in the first ${}_2F_{4\ell+1}$, one of the bottom parameters is $1+\frac{j-1}{2\ell}-\frac{(j+\ell)-1}{2\ell}=\frac{1}{2}$, whereas in the second ${}_2F_{4\ell+1}$, we have $ 1+\frac{(j+\ell)-1}{2\ell}-\frac{j-1}{2\ell}=\frac{3}{2}$ as one of its bottom parameters.
	
	For $1\leq i\leq 2\ell, i\neq j$, where $1\leq j\leq\ell$, define
	\begin{align*}
		\overline{b}_i&:=1+\frac{j-1}{2\ell}-b_i=1+\frac{j-i}{2\ell},\nonumber\\
		\overline{b}_{j+\ell}&:=	1+\frac{j-1}{2\ell}-\frac{j+\ell-1}{2\ell}=\frac{1}{2},\nonumber\\
		\overline{b}_{2\ell+1}&:=1+\frac{j-1}{2\ell}-\frac{s(2\ell+1)-1}{4\ell},\hspace{4mm}\overline{b}_{2\ell+2}:=\frac{1}{2}+\frac{j-1}{2\ell}-\frac{s(2\ell+1)-1}{4\ell},\nonumber\\
		\overline{b}_{2\ell+2+i}&:=1+\frac{j-1}{2\ell}-\frac{2i-1}{4\ell}=1+\frac{j-1}{2\ell}-\frac{1}{4\ell}.
	\end{align*}
	Thus, for $1\leq j\leq\ell$,
	\begin{align*}
		\overline{b}_{\ell+i}&=\overline{b}_i-\frac{1}{2},\hspace{5mm}
		\overline{b}_{2\ell+2+\ell+i}=\overline{b}_{2\ell+2+i}-\frac{1}{2},\\
		\overline{b}_{2\ell+2}&=\overline{b}_{2\ell+1}-\frac{1}{2},\hspace{5mm} \overline{b}_{j+\ell}=\frac{1}{2}.
	\end{align*}
	Moreover, if we let $z=(-1)^{\ell+1}\left(\frac{y}{2\ell}\right)^{2\ell}, a=\frac{j-1}{\ell}+\frac{1-s}{2\ell}$,  $c_i=2\overline{b}_{\ell+i}$ for $1\leq i\leq\ell$ and  $c_i=2\overline{b}_{2\ell+2+i}$ for $\ell+1\leq i\leq 2\ell$ in \eqref{hypergeometric identity} and note that
	\begin{align*}
		\frac{az}{c_1\cdots c_{2\ell}}=\frac{(-1)^{\ell}y^{2\ell}}{2^{2\ell}}\frac{\left(\frac{1-s}{2\ell}+\frac{j-1}{\ell}\right)}{\left(s+\frac{s-1}{2\ell}-\frac{j-1}{\ell}-1\right)\left(j-\frac{1}{2}\right)_\ell(j)_\ell},
	\end{align*}
	we find that
	\begin{align*}
		C_{\ell}(s, y, j)&=(-1)^{j-1}\frac{\Gamma\left(s+\frac{s-2j+1}{2\ell}\right)\Gamma\left(\frac{-1-s+2j}{2\ell}\right)}{\Gamma(2j-1)}y^{2j-2}\nonumber\\
		&\quad\times\pFq1{2\ell}{\frac{j-1}{\ell}+\frac{1-s}{2\ell}}{\left\langle1+\frac{j-i}{\ell}\right\rangle_{i=1\atop i\neq j}^{\ell},1-s+\frac{j-1}{\ell}-\frac{s-1}{2\ell},\left\langle1+\frac{j-i}{\ell}-\frac{1}{2\ell}\right\rangle_{i=1}^{\ell}}{(-1)^{\ell+1}\left(\frac{y}{2\ell}\right)^{2\ell}}.
	\end{align*}
	Finally, we show that
	\begin{align}\label{finally}
		\sum_{j=1}^{\ell}C_{\ell}(s, y, j)=\sum_{m=0}^{\infty}\frac{(-1)^m}{(2m)!}y^{2m}\Gamma\left(\frac{1+2m-s}{2\ell}\right)\Gamma\left(\frac{(2\ell+1)s-2m-1}{2\ell}\right).
	\end{align}
	This is true upon splitting the index $m$ of the right-hand side series into residue classes modulo $\ell$ and proving that the resulting sum is nothing but the left-hand side. Indeed, 
	\begin{align}\label{finally-2}
		&\sum_{m=0}^{\infty}\frac{(-1)^m}{(2m)!}y^{2m}\Gamma\left(\frac{1+2m-s}{k-1}\right)\Gamma\left(\frac{ks-2m-1}{k-1}\right)\nonumber\\
		&=\sum_{j=1}^{\ell}\sum_{m=0}^{\infty}\frac{(-1)^{\ell m+j-1}}{(2(\ell m+j-1))!}y^{2(\ell m+j-1)}\Gamma\left(\frac{1+2(\ell m+j-1)-s}{2\ell}\right)\Gamma\left(\frac{ks-2(\ell m+j-1)-1}{2\ell}\right)\nonumber\\
		&=\sum_{j=1}^{\ell}(-1)^{j-1}\frac{\Gamma\left(s+\frac{s-2j+1}{2\ell}\right)\Gamma\left(\frac{-1-s+2j}{2\ell}\right)}{\Gamma(2j-1)}y^{2j-2}D_{\ell}(s, y, j),
	\end{align}
	where
	\begin{align}\label{finally-1}
		D_{\ell}(s, y, j):=\sum_{m=0}^{\infty}\frac{(-1)^{\ell m}y^{2\ell m}\left(\frac{2j-1-s}{2\ell}\right)_m}{(2j-1)_{2\ell m}\left(s+\frac{s-2 j+1-2\ell}{2\ell}-m+1\right)_m}.
	\end{align}
	Employing the elementary identity 
	\begin{align*}
		\left(s+\frac{s-2 j+1-2\ell}{2\ell}-m+1\right)_m=(-1)^m\left(1-s+\frac{j-1}{\ell}-\frac{s-1}{2\ell}\right)_m,
	\end{align*}
	we see from \eqref{finally-1} and \eqref{finally-2} that our proof of \eqref{finally} will be complete provided it is shown that for $m\geq0$,
	\begin{align*}
		(2\ell)^{2\ell m}\prod_{i=1}^{\ell}\left(1+\frac{j-i}{\ell}\right)_m\left(1+\frac{j-i}{\ell}-\frac{1}{2\ell}\right)_m=(2j-1)_{2\ell m}
	\end{align*}
	Clearly, the identity holds for $m=0$. For $m\geq 1$, it follows easily by doing induction on $m$ and noting the elementary identity $(2\ell)^{2\ell}\prod_{i=1}^{\ell}\left(1+\frac{j-i}{\ell}\right)\left(1+\frac{j-i}{\ell}-\frac{1}{2\ell}\right)=(2j-1)_{2\ell}$, which itself follows straightforwardly from the definition of the shifted factorial.
	
	From \eqref{AC}, \eqref{finally}, and replacing $2\ell+1$ by $k$, we are led to
	\begin{align}\label{finally1}
		A_{\ell}(s, y)=\frac{\sqrt{\pi}2^{2-s}}{\sqrt{k-1}}\sum_{m=0}^{\infty}\frac{(-1)^m}{(2m)!}y^{2m}\Gamma\left(\frac{1+2m-s}{k-1}\right)\Gamma\left(\frac{ks-2m-1}{k-1}\right).
	\end{align}
	Finally, substitute \eqref{finally3} and \eqref{finally1} in \eqref{meijer G simplification 1.5}, then multiply both sides of the resulting identity by $\frac{2^{s-2}}{\sqrt{\pi(k-1)}\Gamma(s)}$ to arrive at \eqref{meijerg specialization eqn} upon simplification.
\end{proof}

\end{document}